\documentclass{amsart}
\usepackage[utf8]{inputenc}
\usepackage{amssymb,bm,amsfonts,amsmath,amsthm,enumerate,mathtools,comment}
\usepackage[all]{xy}
\makeatletter
\@namedef{subjclassname@2020}{\textup{2020} Mathematics Subject Classification}
\makeatother
\usepackage[colorlinks]{hyperref}
\hypersetup{
 colorlinks=true,
 linkcolor=blue,
 filecolor=magenta,
 urlcolor=cyan,
}
\usepackage{wrapfig,tikz, tikz-cd}
\usetikzlibrary{arrows}
\usepackage[capitalise]{cleveref}
\crefformat{equation}{(#2#1#3)}
\crefrangeformat{equation}{(#3#1#4--#5#2#6)}
\crefformat{enumi}{(#2#1#3)}
\crefrangeformat{enumi}{(#3#1#4--#5#2#6)}
\newtheorem{theorem}{Theorem}[section]
\newtheorem{lemma}[theorem]{Lemma}
\newtheorem{proposition}[theorem]{Proposition}
\newtheorem{corollary}[theorem]{Corollary}
\newcounter{intro}

\theoremstyle{definition}

\newtheorem{example}[theorem]{Example}
\newtheorem{remark}[theorem]{Remark}

\newtheorem{chunk}[theorem]{}

\newtheorem{question}[theorem]{Question}

\theoremstyle{definition}

\newcommand{\pdim}{{\operatorname{pdim}}}

\newcommand{\Spec}{{\operatorname{Spec}}}

\newcommand{\Hom}{{\operatorname{Hom}}}

\newcommand{\Ext}{{\operatorname{Ext}}}
\newcommand{\Tor}{{\operatorname{Tor}}}
\DeclareMathOperator{\Soc}{Soc}
\newcommand{\RHom}{\operatorname{\mathsf{RHom}}}

\renewcommand{\H}{\operatorname{H}}

\DeclareMathOperator{\coker}{coker}
\DeclareMathOperator{\depth}{depth}

\newcommand{\V}{{\rm{V}}}

\renewcommand{\ll}{\ell\ell}
\newcommand{\gll}{\operatorname{g\ell\ell}}

\newcommand{\ann}{{\operatorname{ann}}}

\newcommand{\shift}{{\scriptstyle\mathsf{\Sigma}}}
\newcommand{\lotimes}{\otimes^{\sf L}}

\newcommand{\m}{\mathfrak{m}}
\newcommand{\p}{\mathfrak{p}}

\DeclareMathOperator{\onto}{\twoheadrightarrow}

\DeclareMathOperator{\Supp}{Supp}

\DeclareMathOperator{\gr}{gr}
\DeclareMathOperator{\Kos}{Kos}

\newcommand{\n}{\mathfrak n}

\makeatletter

\def\l@subsection{\@tocline{2}{0pt}{2pc}{5pc}{}}
\makeatother

\title{Modules with unbalanced support}
\date{}

\author[N.~KC]{Nawaj KC}
\address{Department of Mathematics,
University of Utah, Salt Lake City, UT 84112, U.S.A.}
\email{nawaj.kc@utah.edu}

\subjclass[2020]{13D07, 13D22 (Primary), 13D05, 13H10}

\begin{document}
\begin{abstract}
A finitely generated module has unbalanced support if its support has positive
dimension but meets a top-dimensional component of the spectrum only at the
closed point. We ask what homological signals are left behind by this
geometry. Separating the spectrum into two wings, we prove first Tor
nonvanishing for positive-depth modules and give criteria for second Tor
nonvanishing. In depth zero, we obtain a structural sequence describing the
finite-length torsion under a first Tor vanishing.

A recent counterexample to the Peskine--Szpiro dimension inequality conjecture
establishes the existence of modules of finite projective dimension and
unbalanced support \cite{Ma}. We apply our framework to these modules and
establish a numerical torsion--spine balance for linked Cohen--Macaulay wings.
We also construct such modules over a reduced Gorenstein ring with exactly two
irreducible components.
\end{abstract}

\maketitle
\tableofcontents
\newpage
\section{Introduction}

We study finitely generated modules with \emph{unbalanced support} over
a Noetherian local ring $(R,\m)$: modules of positive dimension whose support
collapses to the closed point on a top-dimensional component of the
spectrum. Our guiding question is: \textit{What homological signals are left behind by such a support geometry?}

We approach this question by separating the spectrum into two wings.
A primary decomposition of zero gives nonzero ideals $I,J$ with
$I\cap J=0$ such that
\[
 \dim\bigl(\Supp M\cap\V(I)\bigr)\geq1,
 \qquad
 \Supp M\cap\V(J)=\{\m\}.
\]
Set $A=R/I$ and $B=R/J$. We call $A$ the \emph{supporting wing}
and $B$ the \emph{collapsed wing}. Write $M_A=M\otimes_R A$ and
$M_B=M\otimes_R B$. This separation allows us to study how the two
restrictions interact, and what that interaction forces on the module.

\subsection{Main results}

We first identify the low-degree Tor signals forced by unbalanced
support in positive depth.

\begin{theorem}\label{thm:intro-separation}
Let $(R,\m)$ be a Noetherian local ring, and let $I,J$ be nonzero ideals
with $I\cap J=0$. Set $A=R/I$ and $B=R/J$. If $M$ is a finitely
generated $R$-module of positive depth with \[
 \dim\bigl(\Supp M\cap\V(I)\bigr)\geq1,
 \qquad
 \Supp M\cap\V(J)=\{\m\},
\] then
\[
 IM=0,\qquad \Tor_1^R(M,A)\cong M\otimes_R I\neq0.
\]
\end{theorem}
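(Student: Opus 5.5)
The argument has three steps: first show $JM$ has finite length, then use positive depth to force $IM=0$, and finally read off the Tor statement from $0\to I\to R\to A\to 0$.

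\emph{Step 1: $JM$ has finite length.} Since $\Supp(M/JM)=\Supp M\cap\V(J)=\{\m\}$, the module $M/JM$ has finite length. Take a prime $\p\neq\m$ in $\Supp M$. Then $\p\notin\V(J)$, so $J\not\subseteq\p$ and $J_\p=R_\p$. Because $I\cap J=0$ and $IJ\subseteq I\cap J$, we get $IJ=0$. Localizing gives $I_\p=I_\p J_\p=0$. Hence $(IM)_\p=I_\p M_\p=0$ for every $\p\neq\m$, so $\Supp(IM)\subseteq\{\m\}$ and $IM$ has finite length. (This localization argument is really all we need from the collapsed-wing hypothesis.)

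\emph{Step 2: $IM=0$.} The submodule $IM\subseteq M$ has finite length, so it lies in $\H^0_\m(M)$. Since $\depth M>0$, we have $\H^0_\m(M)=0$, and therefore $IM=0$.

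\emph{Step 3: the Tor statement.} Tensor $0\to I\to R\to A\to0$ with $M$ to get
\[
0\to\Tor_1^R(M,A)\to M\otimes_R I\to M\to M\otimes_R A\to 0 .
\]
The map $M\otimes_R I\to M$ has image $IM=0$, so $\Tor_1^R(M,A)\cong M\otimes_R I$.

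It remains to show $M\otimes_R I\neq0$. By Nakayama it suffices that $I\neq0$ and $M\neq0$, since then $M\otimes_R I\otimes_R k\cong (M\otimes_R k)\otimes_k(I\otimes_R k)\neq0$. Both hold: $I$ is nonzero by hypothesis, and $M\neq0$ because $\dim(\Supp M\cap\V(I))\ge1$.

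This Nakayama step is the only place where $I\neq0$ enters. The supporting-wing dimension hypothesis is used only to guarantee $M\neq0$, though it matters for consistency: $IM=0$ puts $\Supp M$ inside $\V(I)$. The one point that needs care is Step 1, the passage from the collapsed-wing condition to $I_\p=0$ via $IJ=0$. The remaining steps are formal.
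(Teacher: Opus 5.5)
Your proof is correct. Step~3 (the Tor computation and the Nakayama argument) is the same as the paper's, but you reach $IM=0$ by a different route.

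The paper uses positive depth through associated primes. No associated prime of $M$ can contain $J$, since it would have to equal $\m$. Prime avoidance then gives an $M$-regular element $g\in J$, and $gIM\subseteq (IJ)M=0$ forces $IM=0$.

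You instead localize and use $IJ=0$ to show that $I_\p=0$ at every non-maximal $\p\in\Supp M$. Hence $IM$ is supported only at $\m$, and you then invoke positive depth in the form $\H^0_\m(M)=0$.

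Your route avoids prime avoidance. Its first step also needs no depth hypothesis: it shows $IM\subseteq\H^0_\m(M)$ for every module with $\Supp M\cap\V(J)=\{\m\}$. The paper obtains this only afterwards, by applying the proposition to the spine $N=M/\H^0_\m(M)$.

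The paper's route, in exchange, produces an explicit $M$-regular element of $J$, and it reuses that element later. It serves for the principal quotients $R/(g)$ in Theorem~\ref{thm:principal-signals} and for $g$ regular on the spine in Proposition~\ref{prop:principal}.
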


Thus every positive-depth module with unbalanced support is naturally
an $A$-module, and its first Tor against the supporting wing is nonzero;
see \cref{lem:positive-depth}.
For the collapsed wing, we ask whether $\Tor_2^R(M,B)$ is always
nonzero; see Question~\ref{q:second-tor}.

Principal quotients provide a first case. Their simpler homological
algebra shows that, for every $M$-regular $g\in J$,
\[
 \Tor_1^R(M,R/(g))=0,\qquad \Tor_2^R(M,R/(g))\neq0;
\]
see \cref{thm:principal-signals}. In particular, the second Tor against
$B$ is nonzero when $J$ is principal.

For a more general sufficient condition, choose a presentation
\[
 R=S/(I\cap J),\qquad A=S/I,\qquad B=S/J,
\]
where $(S,\n)$ is a local ring and $I,J$ now denote
the inverse images of the wing ideals. The following criterion gives
an affirmative answer.

\begin{theorem}\label{thm:intro-second-tor}
Retain the positive-depth and support hypotheses of
\cref{thm:intro-separation}, and use the presentation above. If
\[
 IJ\nsubseteq\n(I\cap J),
\]
then
\[
 \Tor_2^R(M,B)\neq0.
\]
\end{theorem}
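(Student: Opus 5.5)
The plan is to rewrite the second Tor as a first Tor and then detect a nonzero class in it using the hypothesis $IJ\nsubseteq\n(I\cap J)$. Write $I'=I/(I\cap J)$ and $J'=J/(I\cap J)$ for the wing ideals in $R$. The sequence $0\to J'\to R\to B\to 0$ gives
\[
\Tor_2^R(M,B)\cong\Tor_1^R(M,J').
\]
By \cref{thm:intro-separation} we have $IM=0$, so $M$ is an $A$-module and $\Tor_1^R(M,A)\cong M\otimes_R I'\neq 0$. Since $I'J'$ is the image of $IJ\subseteq I\cap J$, we get $I'J'=0$ in $R$. Hence $J'\cong (I+J)/I$ is also an $A$-module.

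For $A$-modules $M$ and $N=J'$, the change-of-rings spectral sequence $\Tor_p^A(\Tor_q^R(M,A),N)\Rightarrow\Tor_{p+q}^R(M,N)$ yields the five-term exact sequence
\[
\Tor_2^A(M,J')\to (M\otimes_R I')\otimes_A J'\xrightarrow{\ \phi\ }\Tor_1^R(M,J')\to\Tor_1^A(M,J')\to 0 .
\]
If $\Tor_1^A(M,J')\neq0$ we are done. In general it suffices to show that $\phi$ is nonzero, and this is where the hypothesis enters. I would make $\phi$ explicit as follows. Take a minimal $S$-presentation, choose generators $f_1,\dots,f_r$ of $I$ and $g_1,\dots,g_s$ of $J$, and let $F\to J'$ be the free cover on the $g_j$. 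A class $m\otimes f_i\otimes g_j$ is sent to the element of the first syzygy of $J'$ given by the relation $f_i\cdot e_j$; it satisfies $f_ig_j\in I\cap J$, and hence $f_ie_j$ maps to $0$ in $J'$. This relation is then tensored with $m$.

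The key step, and the main obstacle, is to show that this relation is \emph{not} in $\m\cdot\operatorname{Syz}^R_1(J')$ plus the trivial relations, for some pair $(i,j)$. I would deduce this from $IJ\nsubseteq\n(I\cap J)$: some $f_ig_j$ is a minimal generator of $I\cap J$, equivalently the multiplication map $I\otimes_S J\to (I\cap J)\otimes_S k$ is nonzero. A relation $f_ie_j=\sum_k r_k\sigma_k$ with $r_k\in\m$ lifts to $S$ and would force $f_ig_j\in\n(I\cap J)$, up to the kernel $I\cap J$ of $S\to R$. This is the place where minimality of generators must be tracked carefully.

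Granting this, I would finish by a Nakayama-type detection. Because $\depth M>0$ gives $M\neq0$, the image of $\phi$ contains $m\otimes\overline{\sigma}$, where $m$ is a minimal generator of $M$ and $\sigma$ is the minimal syzygy above. Its image in $\Tor_1^R(M,J')\otimes_R k$, computed from the minimal resolution of $J'$ tensored with $M$, is then nonzero. If the direct detection fails, the fallback is to use an $M$-regular element $x\in\m$ and compare with the principal case of \cref{thm:principal-signals}, via the long exact sequence for $0\to M\xrightarrow{x}M\to M/xM\to0$.
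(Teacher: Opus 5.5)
Your proposal is correct and is essentially the paper's argument: the nonzero class is $m\otimes\bar f_ie_j\in\ker\bigl(M\otimes_R\Omega_R(\bar J)\to M\otimes_R R^s\bigr)\cong\Tor_2^R(M,B)$. It is killed because $\bar IM=0$, and it is nonzero because $\bar f_ie_j\notin\m\,\Omega_R(\bar J)$; your lifting argument proves this last point directly, since the kernel $I\cap J$ contributes only $J(I\cap J)\subseteq\n(I\cap J)$, and it replaces the paper's mixed-syzygy isomorphism while requiring no minimality of generators (the change-of-rings spectral sequence and the fallback are unnecessary, as the explicit element already lies in the kernel).
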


The remaining open case has
\[
 IJ\subseteq\n(I\cap J)
\]
with nonprincipal collapsed ideal.
We now return to $I,J$ denoting the wing ideals in the original ring $R$.

The depth-zero case is qualitatively different. Its finite-length torsion
can potentially accommodate the nonvanishing Tor of its positive-depth
part. We separate these contributions through the exact sequence
\[
 0\longrightarrow T:=\H^0_{\m}(M)\longrightarrow M
 \longrightarrow N:=M/T\longrightarrow0.
\]
We call $N$ the \emph{spine} of $M$. It has positive depth and carries
the same positive-dimensional support. Restriction to the supporting
wing preserves the spine:
\[
 N\cong M_A/\H^0_{\m}(M_A).
\]
\begin{theorem}\label{thm:intro-structure}
Let $M$ be a finitely generated $R$-module with unbalanced support in
the two-wing setup above, and set $T=\H^0_{\m}(M)$. If
$\Tor_1^R(M,A)=0$, then there is an exact sequence
\[
 0\longrightarrow I\otimes_B M_B\longrightarrow T
 \longrightarrow\H^0_{\m}(M_A)\longrightarrow0.
\]
\end{theorem}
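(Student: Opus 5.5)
The plan is to tensor the defining sequence of the supporting wing with $M$ and then apply the torsion functor $\H^0_{\m}(-)$. Tensoring $0\to I\to R\to A\to 0$ with $M$ gives the exact sequence
\[
0\longrightarrow \Tor_1^R(M,A)\longrightarrow I\otimes_R M\longrightarrow M\longrightarrow M_A\longrightarrow 0,
\]
because $\Tor_1^R(M,R)=0$. Under the hypothesis $\Tor_1^R(M,A)=0$, the multiplication map $I\otimes_R M\to M$ is therefore injective, with image $IM$. So we obtain a short exact sequence
\[
0\longrightarrow I\otimes_R M\longrightarrow M\longrightarrow M_A\longrightarrow 0 .
\]

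Next I identify the left-hand term with $I\otimes_B M_B$ and show that it has finite length. Since $IJ\subseteq I\cap J=0$, the ideal $J$ annihilates $I$, so $I$ is naturally a $B$-module. Hence
\[
I\otimes_R M\cong I\otimes_B (B\otimes_R M)=I\otimes_B M_B .
\]
Its support lies in $\Supp M_B=\Supp M\cap \V(J)=\{\m\}$, using the unbalanced-support hypothesis on the collapsed wing. Since the module is finitely generated, it has finite length. In particular $\H^0_{\m}(I\otimes_B M_B)=I\otimes_B M_B$ and $\H^1_{\m}(I\otimes_B M_B)=0$.

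Finally, I apply the local cohomology long exact sequence to the short exact sequence above. This yields
\[
0\to \H^0_{\m}(I\otimes_B M_B)\to \H^0_{\m}(M)\to \H^0_{\m}(M_A)\to \H^1_{\m}(I\otimes_B M_B).
\]
Substituting the two identifications from the previous step and writing $T=\H^0_{\m}(M)$ gives exactly the asserted sequence $0\to I\otimes_B M_B\to T\to \H^0_{\m}(M_A)\to 0$.

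I do not expect a serious obstacle. The only points needing care are the base-change identification $I\otimes_R M\cong I\otimes_B M_B$, which rests on $JI=0$, and the observation that finite length of this term forces the vanishing of its first local cohomology, which yields surjectivity onto $\H^0_{\m}(M_A)$. Notably, the dimension condition on the supporting wing is not needed for exactness; it only makes the statement meaningful in this setting.
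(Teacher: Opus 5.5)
Your proof is correct and follows essentially the paper's argument: the sequence $0\to IM\to M\to M_A\to 0$, the identification $IM\cong I\otimes_R M\cong I\otimes_B M_B$ from the Tor vanishing and $IJ=0$, and then $\H^0_{\m}$ together with the vanishing of $\H^1_{\m}$ on a finite-length module. The only difference is how you get finite length of $IM$. The paper uses $IM\subseteq T$, which comes from $IN=0$ for the positive-depth spine via \cref{lem:positive-depth} and does not need the Tor hypothesis. You instead use the support condition $\Supp M_B=\{\m\}$.
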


Thus $T$ contains a contribution from the collapsed wing, with quotient
the torsion on the supporting wing. This structural sequence requires
only a single Tor vanishing; see \cref{prop:structure}.

The interaction becomes more restrictive when the wings are linked,
meaning that $\ann_R I=J$ and $\ann_R J=I$. These conditions come for
free for partitions of the minimal primes of a reduced ring. When both
linked ideals are principal, the torsion cannot cancel the spine's Tor:
we prove $\Tor_1^R(M,A)\neq0$, even when $M$ has depth zero;
see \cref{prop:principal}.

These results require no finite-projective-dimension hypothesis. We next
apply the framework to modules of finite projective dimension, motivated
by the counterexamples to the dimension inequality of Peskine--Szpiro
constructed in \cite{Ma}. The predicted inequality was
\[
 \dim M+\dim Z\leq\dim R
\]
whenever $\pdim_R M<\infty$ and $\Supp M\cap Z=\{\m\}$ for some closed subset $Z$.
A counterexample to this inequality when $Z$ is a top-dimensional component is a module of finite projective dimension with
unbalanced support. The New Intersection Theorem \cite{PS1,H,Ro1}
gives $\pdim_R M\geq\dim Z$. Together with Auslander--Buchsbaum, it
forces
\begin{equation}\label{eq:nit-reduction}
 R\text{ is Cohen--Macaulay},\qquad \depth_R M=0.
\end{equation}
Thus finite-length torsion is essential to any such counterexample.
In Appendix \ref{app:nit}, we give a proof of the New Intersection
Theorem that makes the nonvanishing homology explicit.

We then impose Cohen--Macaulayness on the linked wings as an additional
hypothesis, and write $C=R/(I+J)$. A module of finite projective
dimension is Tor-independent from both wings, so its restrictions
have finite projective dimension and the structural sequence above
applies automatically; see \cref{obs:restriction}. Their derived
restrictions to $C$ agree. We illustrate this compatibility with a
variation on the counterexample constructed in \cite{Ma}, in which
the supporting wing is regular; see \cref{susec:variation}.

The same derived compatibility forces a precise numerical
torsion--spine balance. When $R$ is Gorenstein, $C$ is the quotient
by a canonical ideal on each wing. Duality then expresses this balance
in terms of the length asymmetries between the finite-length torsion
on each wing and its top Ext dual.

\begin{theorem}\label{thm:intro-balance}
Suppose that $R$ is Gorenstein of dimension $d$ and that $A$ and $B$
are linked Cohen--Macaulay wings. Let $M$ have finite projective
dimension over $R$, with
\[
 \dim(M\otimes_R A)\geq1,\qquad \dim(M\otimes_R B)=0,
\]
and set $N=M/\H^0_{\m}(M)$. Then
\[
\begin{aligned}
 &\ell(N\otimes_A C)
 -\ell\bigl(\Tor_1^A(N,C)\bigr)
 +\ell\bigl(\Tor_2^A(N,C)\bigr)\\
 &\qquad=
 \bigl[\ell(M_B)-\ell\bigl(\Ext_B^d(M_B,B)\bigr)\bigr]\\
 &\qquad\quad-
 \bigl[\ell\bigl(\H^0_{\m}(M_A)\bigr)
       -\ell\bigl(\Ext_A^d(\H^0_{\m}(M_A),A)\bigr)\bigr].
\end{aligned}
\]
\end{theorem}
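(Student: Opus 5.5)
The plan is to compare Euler characteristics of the two derived restrictions to $C=R/(I+J)$. By \cref{obs:restriction}, $M$ is Tor-independent from $A$ and $B$. Hence $M_A$ and $M_B$ have finite projective dimension over $A$ and $B$, and
\[
 M_A\lotimes_A C\;\simeq\; M\lotimes_R C\;\simeq\; M_B\lotimes_B C .
\]
This is a bounded complex with finite-length homology, because $\Supp M\cap \V(J)=\{\m\}$. So the alternating sums of homology lengths agree:
\[
 \chi^A(M_A,C)=\chi^B(M_B,C).
\]
The proof then consists of computing each side separately.

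\emph{Collapsed side.} For Gorenstein $R$ with linked CM wings, $I\cong \omega_B$, so there is an exact sequence $0\to\omega_B\to B\to C\to0$ of $B$-modules. Since $M_B$ has finite projective dimension, $\chi^B(M_B,C)=\ell(M_B)-\chi^B(M_B,\omega_B)$. Because $M_B$ has finite length and finite projective dimension over the CM ring $B$ of dimension $d$, we get $\RHom_B(M_B,B)\simeq \Ext^d_B(M_B,B)[-d]$. Write $M_B^*=\Ext^d_B(M_B,B)$. Then
\[
 M_B\lotimes_B\omega_B\simeq \RHom_B(\RHom_B(M_B,B),\omega_B)\simeq \RHom_B(M_B^*,\omega_B)[d] .
\]
This is concentrated in one degree and isomorphic to the Matlis dual of $M_B^*$. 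Therefore
\[
 \chi^B(M_B,C)=\ell(M_B)-\ell(\Ext^d_B(M_B,B)),
\]
which is the first bracket in the statement.

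\emph{Supporting side.} Use $0\to T_A\to M_A\to N\to 0$ with $T_A=\H^0_\m(M_A)$; by \cref{prop:structure} and the restriction remark, $N$ is the spine. I would like $\chi^A(M_A,C)=\chi^A(N,C)+\chi^A(T_A,C)$, and then evaluate $\chi^A(T_A,C)$ exactly as on the $B$ side via $0\to\omega_A\cong J\to A\to C\to0$. That would give $\ell(T_A)-\ell(\Ext^d_A(T_A,A))$, and rearranging yields the identity. The expression $\ell(N\otimes_A C)-\ell(\Tor_1^A(N,C))+\ell(\Tor_2^A(N,C))$ is then the truncated Euler characteristic of $N$ against $C$.

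\emph{Main obstacle.} Neither $T_A$ nor $N$ need have finite projective dimension over $A$, so neither Euler characteristic is a priori defined. I would first try to show that $\Tor_i^A(N,C)=0$ for $i\ge 3$, and that the $T_A$ term can be handled without finiteness. The $A$-side analogue of the duality computation above requires finite projective dimension, so as a first attempt I would work with the long exact Tor sequence of $0\to T_A\to M_A\to N\to0$ against $C$. Since $\Tor^A_{\ge d+1}(M_A,C)=0$, this gives $\Tor^A_{i+1}(N,C)\cong\Tor^A_i(T_A,C)$ for large $i$. Next I would dimension-shift along $0\to\omega_A\to A\to C\to0$, which gives $\Tor^A_{i}(X,C)\cong\Tor^A_{i-1}(X,\omega_A)$ for $i\ge2$. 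I would then express $\Tor^A_\ast(T_A,\omega_A)$ through $\Ext^{d-\ast}_A(T_A,A)$ by local duality over the CM ring $A$, expecting the finite-length module $T_A$ to contribute only through $\Ext^d_A(T_A,A)$. Any higher Tor that survives must cancel in pairs between the $N$ and $T_A$ terms, leaving exactly degrees $0,1,2$ for $N$. This bookkeeping, and in particular identifying where positive depth of $N$ cuts off its Tor against $C$ at degree $2$, is the step I expect to be the main difficulty.
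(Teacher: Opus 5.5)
\textbf{There is a genuine gap on the supporting side, and the route you sketch there would not close it.}

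The missing observation is that $\Tor_i^A(M_A,C)\cong\Tor_i^R(M,C)=0$ already for all $i\geq 2$, not only for $i\geq d+1$. To see this, tensor $0\to R\to A\oplus B\to C\to 0$ with $M$ and use Tor-independence from both wings. This gives $\Tor_i^R(M,C)\cong\Tor_{i-1}^R(M,R)=0$ for $i\geq2$, which is part of \cref{obs:restriction}. Consequently the long exact Tor sequence of $0\to T_A\to M_A\to N\to 0$ against $C$ breaks off at $\Tor_2^A(M_A,C)=0$. After identifying the $M_A$-terms with the $M_B$-terms via the derived interface, it yields an exact sequence of finite-length modules
\[
\begin{aligned}
0&\to\Tor_2^A(N,C)\to\Tor_1^A(T_A,C)\to\Tor_1^B(M_B,C)\to\Tor_1^A(N,C)\\
&\to T_A\otimes_A C\to M_B\otimes_B C\to N\otimes_A C\to 0.
\end{aligned}
\]
Its alternating length sum is the whole proof. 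No Euler characteristic of $N$ or of $T_A$ is ever needed, and the only terms of $T_A$ that appear are $T_A\otimes_A C$ and $\Tor_1^A(T_A,C)$. For these, tensor $0\to\omega_A\to A\to C\to 0$ with $T_A$ to obtain
\[
0\to\Tor_1^A(T_A,C)\to T_A\otimes_A\omega_A\to T_A\to T_A\otimes_A C\to 0.
\]
Then use $T_A\otimes_A\omega_A\cong\Ext_A^d(T_A,A)^\vee$. This holds for every finite-length module over a Cohen--Macaulay ring with a canonical ideal, with no finite projective dimension hypothesis. The same two facts give the $B$-side bracket more cheaply than your $\RHom$ computation, which is correct but not needed.

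The steps you propose instead would not work in general. For $i\geq 2$ the sequence gives
\[
\Tor_{i+1}^A(N,C)\cong\Tor_i^A(T_A,C)\cong\Tor_{i-1}^A(T_A,\omega_A).
\]
Local duality identifies the Matlis dual of $\Tor_j^A(T_A,\omega_A)$ with $\Ext_A^{d+j}(T_A,A)$, not with $\Ext_A^{d-j}(T_A,A)$; the latter vanishes for $j>0$ because $\depth A=d$. So proving $\Tor_{\geq 3}^A(N,C)=0$ amounts to proving $\Ext_A^{>d}(T_A,A)=0$, and nothing in the hypotheses provides this. It fails, for instance, if $A$ is not Gorenstein and $k$ is a direct summand of $T_A$. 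In that case $\omega_A$ has infinite projective dimension, so $\Tor_j^A(k,\omega_A)\neq 0$ for all $j$.

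Likewise, positive depth of $N$ plays no role in the cutoff at degree $2$. The higher Tor groups of $N$ and $T_A$ do match in pairs, but there may be infinitely many of them. The correct way to discard them is the truncation at $\Tor_2^A(M_A,C)=0$, not a vanishing statement.
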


No finite-projective-dimension hypothesis on the spine or on
$\H^0_{\m}(M_A)$ is required; see \cref{thm:torsion-spine-balance}.
Together with the principal-ideal description of the canonical
module when $A$ is Gorenstein, this balance rules out the case in
which $R$, $A$, and $B$ are all Gorenstein. Thus the first case to
consider has $R$ and $A$ Gorenstein and $B$ of type at least two.
Here the balance takes a particularly simple form.

\begin{corollary}\label{cor:intro-length-asymmetry}
Under the hypotheses of \cref{thm:intro-balance}, suppose that $A$
is Gorenstein, and write $J=(g)$. Then
\[
 \ell(N/gN)
 =\bigl[\ell(M_B)-\ell\bigl(\Ext_B^d(M_B,B)\bigr)\bigr]>0.
\]
\end{corollary}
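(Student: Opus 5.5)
Under the hypotheses of \cref{thm:intro-balance}, I plan to specialize the balance identity to the case $J=(g)$ with $A$ Gorenstein and show that the left side collapses to $\ell(N/gN)$ while the second bracket on the right vanishes. Positivity will then come from the nonvanishing of the spine's restriction to $C$.

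\textbf{Step 1: identify $C$ and the Tor terms on the left.} Since the wings are linked, $I=\ann_R J=\ann_R(g)$ and $J=(g)=\ann_R I$. So $C=R/(I+J)=A/gA$, and $gA$ is the image of $J$ in $A$. I first check that $g$ is a nonzerodivisor on $A$: if $gx\in I$, then $gx\in I\cap J=0$, so $x\in\ann(g)=I$. Thus $C=A/gA$ has the two-term free resolution $0\to A\xrightarrow{g}A\to C\to0$, and $\Tor_2^A(N,C)=0$. Next, $\Tor_1^A(N,C)=(0:_N g)$. The spine $N$ has positive depth and is an $A$-module (\cref{thm:intro-separation}, applied to $N$, which has the same support). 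Its support meets $\V(g)$ only at $\m$, because $\Supp N\cap\V(J)=\{\m\}$. A prime associated to $N$ is non-maximal by positive depth, so it does not contain $g$. Hence $g$ is $N$-regular and $\Tor_1^A(N,C)=0$. The left side therefore equals $\ell(N\otimes_A C)=\ell(N/gN)$, which is finite since $\Supp N\cap\V(g)=\{\m\}$.

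\textbf{Step 2: kill the $A$-bracket.} $A$ is Gorenstein of dimension $d$: it is Cohen--Macaulay of dimension $d$, since it carries a top-dimensional component. For a finite-length $A$-module $L$, local duality gives $\Ext_A^d(L,A)\cong \Hom_A(\H^0_{\m}(L),E)^{\vee}$, i.e.\ the Matlis dual of $L$. In particular $\ell(\Ext^d_A(L,A))=\ell(L)$. Applying this to $L=\H^0_{\m}(M_A)$ makes the second bracket zero, and the identity becomes $\ell(N/gN)=\ell(M_B)-\ell(\Ext_B^d(M_B,B))$.

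\textbf{Step 3: positivity.} It suffices to show $N/gN\neq0$, which follows from Nakayama once $N\neq0$. We have $N\neq0$ because $\dim M_A\ge1$, while $T=\H^0_\m(M)$ has finite length; so $M$ is not of finite length. Since $g\in\m$, Nakayama gives $N/gN\neq0$, so $\ell(N/gN)>0$.

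The main obstacle is likely Step 1's verification that $g$ is regular on both $A$ and $N$; this is exactly where linkage and positive depth of the spine are used. I also need to be careful that the local-duality length equality in Step 2 uses only that $A$ is Gorenstein (not $B$), since $B$ has larger type. In $B$ the analogous Ext is the Matlis dual twisted by the canonical module $\omega_B\cong I$, which is why that bracket survives.
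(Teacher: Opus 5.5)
Your proposal is correct and follows the paper's proof essentially verbatim. The paper also shows that $g$ is a nonzerodivisor on both $A$ and $N$, so the higher Tor terms on the left of the balance vanish; it then uses local duality over the Gorenstein wing $A$ to kill the supporting-wing bracket, and Nakayama to get positivity. One cosmetic slip: in Step~2 the duality formula should read $\Ext_A^d(L,A)\cong\Hom_A(L,E)=L^\vee$, without the extra outer $\vee$, though this does not affect the length equality you actually use.
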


The positive length on the supporting wing must therefore be
supplied by a strict length asymmetry on the collapsed wing.
Combining Roberts' construction \cite{Ro2} with Ma's patching method
\cite{Ma} gives an example with this asymmetry.

\begin{theorem}\label{thm:intro-gorenstein-example}
For every field $k$ of characteristic different from two, there is a
three-dimensional complete reduced Gorenstein local ring $R$ with residue
field $k$ and exactly two irreducible components. Their coordinate rings
$A$ and $B$ are linked and Cohen--Macaulay, with $A$ Gorenstein and $B$
of type three. The ring $R$ admits a module $M$ of finite projective
dimension with
\[
 \dim(M\otimes_R A)=1,\qquad \dim(M\otimes_R B)=0.
\]
\end{theorem}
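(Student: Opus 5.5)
Following the abstract, I would build the example by running Ma's patching method \cite{Ma} on Roberts' construction \cite{Ro2}. Roberts gives a local ring (a Segre-type product, e.g.\ the completion at the irrelevant ideal of $k[x_i]\# k[y_j]$ or a quotient of it) together with a complex or module of finite projective dimension that violates the expected dimension behaviour. The target is a ring $R=S/(I\cap J)$ with $S$ a complete regular local ring of dimension four over $k$. The ideal $J=(g)$ should cut out a Gorenstein hypersurface wing $A$; the other wing $B=S/I$ should be a three-dimensional Cohen--Macaulay domain of type three.

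\textbf{Step 1: the wings.} I would take $B$ to be a Roberts-type Cohen--Macaulay ring of type three. A determinantal or Segre-type quotient such as the cone over a rational normal scroll or $\mathbb P^1\times\mathbb P^2$ has the right dimension and type. I would then choose $g\in I$ so that $I\cap(g)$ defines the union. To make $A$ and $B$ linked with $R$ Gorenstein, I would use the standard linkage construction. Pick a complete intersection $\mathfrak c=(f_1)\subset I$ with $f_1=g\cdot h$, where $(h)$ contains $I$, so that $I$ and $(g)$ are directly linked via $\mathfrak c$. Then $R=S/\mathfrak c$ is a hypersurface, hence Gorenstein. It is reduced because $f_1$ is squarefree, and it has exactly the two components $\V(I)$ and $\V(g)$. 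Reducedness and the two-component count need $\operatorname{char}k\neq2$, presumably via a diagonal quadric such as $x^2-y^2$ factoring appropriately. Linkage theory then gives $\ann_R I=J$, $\ann_R J=I$, Cohen--Macaulayness of both wings, and $C=R/(I+J)$ as a canonical-ideal quotient on each wing.

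\textbf{Step 2: the module.} I would produce $M$ of finite projective dimension over $R$. Following Ma, I would first find a finite-length $B$-module $L$ and a one-dimensional $A$-module $N'$ whose derived restrictions to $C$ agree, i.e.\ $L\lotimes_B C\simeq N'\lotimes_A C$. This is exactly the compatibility of \cref{obs:restriction}. I would then glue them along $C$, via the Milnor-type square $R\to A\times_C B$, to obtain a perfect $R$-complex, and take $M$ to be its homology (or a cokernel). Corollary~\ref{cor:intro-length-asymmetry} serves as a design guide. It requires $\ell(N/gN)=\ell(M_B)-\ell(\Ext^3_B(M_B,B))>0$, so $L$ must have strictly more length than its Matlis-type dual. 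This is possible precisely because $B$ has type three and not one. Roberts' example supplies a finite-length module over a type-$>1$ Cohen--Macaulay ring with this asymmetry.

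\textbf{Step 3: verification.} I would check $\pdim_R M<\infty$ by computing that the glued complex is a bounded complex of free $R$-modules; patching of free modules along $C$ gives free modules when the ranks match. I would also check $\dim M_A=1$ and $\dim M_B=0$ directly from the support of the construction.

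\textbf{Main obstacle.} I expect the hardest part to be Step 2, the matching of the derived restrictions to $C$ so that the patched object is genuinely perfect over $R$. Concretely, this means producing explicit resolutions and an isomorphism of complexes over $C$ that lifts. I would first try to realize $L$ as the cokernel of Roberts' explicit matrix over $B$ and pick $N'=A/(\text{one element})$-type modules, then adjust by syzygies until the lengths balance. The numerical identity of \cref{thm:intro-balance} serves as a consistency check before doing the lifting.
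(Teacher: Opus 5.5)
\textbf{The gap is in Step~1: your ambient ring forces the collapsed wing to be Gorenstein, which is the one configuration the paper shows cannot carry such an $M$.}

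\textbf{Why the four-dimensional setup fails.} If $S$ is a four-dimensional regular local ring, then a three-dimensional Cohen--Macaulay quotient $S/I$ is cut out by an unmixed height-one ideal of a UFD. Such an ideal is principal, so $S/I$ has type one. Concretely, your reduced hypersurface $R=S/(gh)$ has wings $S/(g)$ and $S/(h)$, both Gorenstein.

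\textbf{The same conclusion from linkage alone.} In a Gorenstein $R$ with linked wings one has $\ker(R\to A)=\ann_R\ker(R\to B)=\Hom_R(B,R)\cong\omega_B$, so $\mu_R(\ker(R\to A))=\operatorname{type}(B)$. Your choice $A=S/(g)$ makes $\ker(R\to A)$ principal, hence $B$ Gorenstein. The roles must be the other way round: the ideal of $B$ is principal (because $A$ is Gorenstein), while the ideal of $A$ needs three generators.

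\textbf{Consequence.} With $R$, $A$, $B$ all Gorenstein, \cref{cor:gorenstein} shows that no module with the required properties exists. Equivalently, the asymmetry in \cref{cor:intro-length-asymmetry}, which you yourself cite, vanishes. So Steps~2--3 cannot succeed on this ring. Complete-intersection linkage in higher codimension cannot be used with Roberts' ring either. A Gorenstein link partner would force the ideal of $B$ to be an almost complete intersection $\mathfrak c+(h)$. The type-three ring actually used has codimension three and at least six independent quadratic relations, so it is not one.

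\textbf{The missing idea: glue along a canonical ideal.} The relevant construction of Roberts is a double cover, not a Segre-type ring. One takes $B=H\oplus Qt$ over $H=k[[x,y,z,w]]/(xw-yz)$, with $Q=(x,y)H$ and $t^2=z/x$. This double cover, not a factoring quadric, is where $2$ must be invertible. The paper shows three things about it (\cref{prop:roberts-canonical}):
\begin{itemize}
\item $W=(x,z,xt)B$ is a canonical ideal of $B$;
\item $\operatorname{type}(B)=3$;
\item $C=B/W\cong k[[y,w,b]]/(b^2-yw)$ is free of rank two over $C_0=H/(x,z)$.
\end{itemize}
Then $A=C[[u]]$ is a Gorenstein hypersurface, and $R=A\times_C B\cong B[[u]]/uW[[u]]$ is Gorenstein by \cref{lem:canonical-ideal-gluing}. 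Here $\ker(R\to B)=(u,0)R$ is principal while $\ker(R\to A)=0\times W$ is not. The annihilator conditions hold because $A$ and $B$ are domains.

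\textbf{The matching, which Step~2 leaves unspecified.} In the paper:
\begin{itemize}
\item $M_B=\Ext_H^3(L^\vee,H)\otimes_H B$ has $\ell(M_B)=31$ and $\ell(\Ext_B^3(M_B,B))=29$.
\item Rank-two freeness of $C$ over $C_0$ gives $M_B\lotimes_B C\simeq E^6\oplus\shift E^5$, with $E=k[b]/(b^2)$.
\item This is matched by $M_A=A/(y,w)A\oplus(A/(u,y,w)A)^5$.
\end{itemize}
The spine must be a quotient by two elements: a one-element quotient of the three-dimensional $A$ would give $\dim M_A=2$. Ma's patching then produces $M$. The patched perfect complex has homology only in degree zero, because $M_A$ and $M_B$ are modules and their common restriction has homology only in degrees $0$ and $1$.
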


Roberts' construction supplies the non-Gorenstein collapsed component
and a finite-length module with the required length asymmetry.
Gluing along a canonical ideal produces the Gorenstein ambient ring,
and the patching method in \cite{Ma} produces the module with
unbalanced support. We explain the construction in
\cref{sec:gorenstein-example}.

Finally, we investigate how large the torsion must be. Over a Gorenstein
ambient ring, the collapsed-wing contribution contains the full socle
of $T$, and its quotient is the nonzero module $\H^0_{\m}(M_A)$.
The length and Loewy length conjectures on the collapsed wing therefore
imply, respectively,
\[
 \ell(T)\geq e(B)+1,
 \qquad
 \ll(T)\geq\max\{2,\gll(B)\}.
\]
If the spine has finite projective dimension over $A$, the respective
conjectures on both wings give
\[
 \ell(T)\geq e(A)+e(B)=e(R),
 \qquad
 \ll(T)\geq\max\{\gll(A)+1,\gll(B)\}.
\]
A full-support example with $T=k$ shows why unbalanced support is
essential to such bounds. We close with the exploratory question of
whether $\ell(T)\geq e(R)$ and $\ll(T)\geq\gll(R)$ hold throughout
the Gorenstein setting above, and more generally seek uniform lower
bounds reflecting the singularity of $R$.
\section{Unbalanced support geometry}
\label{sec:geometry}
Let \((R,\mathfrak m,k)\) be a Noetherian local ring. A finitely generated \(R\)-module \(M\) has \textit{unbalanced support} if
\[
\dim_R M \geq 1
\quad\text{and}\quad
\operatorname{Supp}_R M\cap V(\mathfrak p)=\{\mathfrak m\}
\]
for some prime \(\mathfrak p\) with \(\dim R/\mathfrak p=\dim R\).

To obtain the two wings, write
\[
(0)=Q_1\cap\cdots\cap Q_s
\]
as an irredundant primary decomposition, with \(\sqrt{Q_1}=\mathfrak p\), and set
\[
J=Q_1,\qquad I=\bigcap_{\nu=2}^s Q_\nu.
\]
There must be at least one remaining component, since otherwise \(V(\mathfrak p)=\operatorname{Spec}R\), contrary to \(\dim M \geq 1\). The ideals \(I\) and \(J\) are nonzero and satisfy
\[
I\cap J=0.
\]
Set
\[
A=R/I,\qquad B=R/J,\qquad C=R/(I+J),
\]
and write \(M_A=M\otimes_R A\) and \(M_B=M\otimes_R B\). Then $\Spec(R) = \Spec(A) \cup \Spec(B)$, and
\begin{equation}\label{eq:unbalanced}
\dim M_A=\dim M \geq 1,\qquad \dim M_B = 0.
\end{equation}
We call \(A\) the \textit{supporting wing} and \(B\) the \textit{collapsed wing}. The exact sequence
\begin{equation}\label{eq:ring-patching}
0\longrightarrow R\longrightarrow A\oplus B
\longrightarrow C\longrightarrow0.
\end{equation}
identifies \(R\) with the fiber product \(A\times_C B\).

More generally, we may group the primary components into two nonempty sets, provided that the support of \(M\) meets the resulting collapsed wing only at the closed point. No Cohen--Macaulay or linkage hypothesis is needed for this
construction.

\subsection{Homological signals}
\label{subsec:signals}
\label{subsec:mixed-syzygies}

We first identify the nonzero low-degree Tor signals forced by
unbalanced support for modules of positive depth.

\begin{proposition}\label{lem:positive-depth}
Suppose that \(M\) has unbalanced support with respect to the wings \(A=R/I\) and \(B=R/J\). If \(\operatorname{depth}_R M\ge1\), then
\[
IM=0
\quad\text{and}\quad
\operatorname{Tor}_1^R(M,A)\cong M\otimes_R I\ne0.
\]
\end{proposition}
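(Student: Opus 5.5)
The plan is to prove the annihilation $IM=0$ by a support argument and then read off $\Tor_1$ from the defining sequence of $A$; nonvanishing will follow from Nakayama.

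\emph{Step 1: $IM=0$.} Since $IJ\subseteq I\cap J=0$, the ideal $J$ annihilates $I$, and hence annihilates the submodule $IM\subseteq M$. Thus $IM$ is a finitely generated module over $B=R/J$, and so $\Supp_R(IM)\subseteq \V(J)$. It is also a submodule of $M$, so $\Supp_R(IM)\subseteq\Supp_R M$. The unbalanced support hypothesis $\Supp_R M\cap\V(J)=\{\m\}$ therefore gives $\Supp_R(IM)\subseteq\{\m\}$, so $IM$ has finite length. A finite-length submodule of $M$ lies in $\H^0_{\m}(M)$. Since $\depth_R M\ge 1$, we have $\H^0_{\m}(M)=0$, and hence $IM=0$.

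\emph{Step 2: the Tor identification.} Tensor the exact sequence $0\to I\to R\to A\to 0$ with $M$. This gives the exact sequence
\[
0\longrightarrow \Tor_1^R(M,A)\longrightarrow M\otimes_R I\longrightarrow M\longrightarrow M\otimes_R A\longrightarrow 0 .
\]
The image of the middle map $M\otimes_R I\to M$ is $IM$, which vanishes by Step~1. Hence $\Tor_1^R(M,A)\cong M\otimes_R I$.

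\emph{Step 3: nonvanishing.} The ideal $I$ is nonzero, and $M\neq0$ because $\dim M\ge1$. Both are finitely generated over the local ring $R$, so Nakayama gives $I/\m I\neq 0$ and $M/\m M\neq0$. Then
\[
(M\otimes_R I)\otimes_R k\cong (M/\m M)\otimes_k (I/\m I)\neq 0,
\]
so $M\otimes_R I\neq0$.

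I do not expect a genuine obstacle. The only point needing care is Step~1, where the key observation is that $IJ=0$ makes $IM$ a $B$-module, which forces its support into the collapsed wing and hence to the closed point.
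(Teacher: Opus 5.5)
Your proof is correct. Steps~2 and~3 are the paper's argument verbatim; the only difference is how you obtain $IM=0$ in Step~1.

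The paper's proof of Step~1 works as follows. An associated prime of $M$ containing $J$ would lie in $\Supp M\cap\V(J)=\{\m\}$, which contradicts positive depth. So prime avoidance gives an element $g\in J$ that is a nonzerodivisor on $M$, and $gIM=0$ then forces $IM=0$.

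You instead note that $IM\subseteq(0:_M J)$ is a $B$-module supported in $\Supp M\cap\V(J)$. Hence it has finite length and sits inside $\H^0_{\m}(M)=0$. Both arguments rest on the same fact, namely $\m\notin\operatorname{Ass}_R M$.

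Your version has two small advantages. It avoids prime avoidance. It also shows, with no depth hypothesis, that $IM\subseteq(0:_MJ)\subseteq\H^0_{\m}(M)$ for every module with unbalanced support. This is exactly the inclusion $IM\subseteq T$ that the paper later obtains by passing through the spine.

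The paper's version has its own payoff: it produces an explicit $M$-regular element of $J$. The paper reuses such elements in its results on principal quotients.
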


\begin{proof}
    Since \(M/JM\) has finite length,
\[
V(J)\cap\operatorname{Supp}_R M=\{\mathfrak m\}.
\]
If an associated prime of \(M\) contained \(J\), it would therefore equal \(\mathfrak m\), contradicting positive depth. Prime avoidance gives an element \(g\in J\) that is a nonzerodivisor on \(M\). As \(IJ=0\), we have \(gIM=0\), and hence \(IM=0\).

Tensoring
\[
0\longrightarrow I\longrightarrow R\longrightarrow A\longrightarrow0
\]
with \(M\) now gives
\[
\operatorname{Tor}_1^R(M,A)\cong M\otimes_R I.
\]
This tensor product is nonzero by Nakayama's lemma, since its reduction
modulo $\m$ is the tensor product over $k$ of two nonzero vector spaces.
\end{proof}

Thus every positive-depth module with unbalanced support is naturally a module over the supporting wing, and its first Tor against that wing is necessarily nonzero. This conclusion uses only the separation of the two wings.

\begin{question}\label{q:second-tor}
Under the hypotheses of \cref{lem:positive-depth}, must
\[
\operatorname{Tor}_2^R(M,B)\ne0?
\]
\end{question}

We give a sufficient condition for an affirmative answer to
Question~\ref{q:second-tor}. Choose a surjection of Noetherian local rings
\[
 (S,\n)\twoheadrightarrow(R,\m).
\]
Write $I,J$ for the inverse images in $S$ of the wing ideals. Then
\[
 R=S/(I\cap J),\qquad A=S/I,\qquad B=S/J.
\]
When $R$ is complete, the Cohen structure theorem allows us to choose
$S$ complete and regular. For a general $R$, flat base change gives
\[
 \Tor_i^R(M,B)\otimes_R\widehat R
 \cong\Tor_i^{\widehat R}(\widehat M,\widehat B)
 \qquad(i\geq0).
\]
Faithful flatness detects nonvanishing, and completion preserves positive
depth and the finite-length condition on $M_B$. Thus we may apply the
criterion after completion using a regular presentation. The computation
below applies to any such Noetherian local $S$.

We write $\bar I,\bar J$ for the images of $I,J$ in $R$.
Thus $\bar I\cap\bar J=0$, and $\m=\n R$.
The mechanism is as follows: if $f\in I$ and $g\in J$, a product $fg$
that survives modulo $\n(I\cap J)$ yields a syzygy of $\bar J$ that survives modulo
$\m$. Its coefficient lies in $\bar I$, which annihilates $M$ by
\cref{lem:positive-depth}. Tensoring therefore produces a nonzero
second Tor class. We make this construction precise below.

Choose generators
\[
I=(f_1,\ldots,f_r),\qquad J=(g_1,\ldots,g_s),
\]
and the corresponding free covers $S^r\longrightarrow I$
and $S^s\longrightarrow J$. Let
\[
Z=\ker\bigl(S^r\oplus S^s\longrightarrow I+J\bigr).
\]
Explicitly,
\[
Z=\left\{(a,b)\in S^r\oplus S^s:
\sum_{i=1}^r a_i f_i+\sum_{j=1}^s b_j g_j=0\right\}.
\]
The concatenated generators need not minimally generate $I+J$;
this presentation retains the distinction between the two ideals.
Define the submodules of \emph{pure $I$-syzygies} and
\emph{pure $J$-syzygies} by
\[
Z_I=Z\cap(S^r\oplus0),\qquad
Z_J=Z\cap(0\oplus S^s).
\]
Taking the common value of the two sums defines
\[
\pi:Z\longrightarrow I\cap J,\qquad
(a,b)\longmapsto
\sum_{i=1}^r a_i f_i=-\sum_{j=1}^s b_j g_j,
\]
and gives an exact sequence of $S$-modules
\[
0\longrightarrow Z_I\oplus Z_J\longrightarrow Z
\xrightarrow{\;\pi\;}I\cap J\longrightarrow0.
\]

Let $\Omega_R(\bar I)$ denote the kernel of the induced cover
$R^r\longrightarrow\bar I$, and define $\Omega_R(\bar J)$ similarly
using $R^s\longrightarrow\bar J$. These are the syzygies of the
chosen free presentations; they need not be minimal syzygies.
Projection onto the first summand, followed by reduction modulo
$I\cap J$, gives a surjection
\[
\rho_I:Z\longrightarrow\Omega_R(\bar I),\qquad
(a,b)\longmapsto\bar a.
\]
Indeed, every relation among the $\bar f_i$ lifts to a sum
$\sum_i a_i f_i\in I\cap J$, which can also be expressed using
the generators of $J$. The analogous map is
$\rho_J:Z\longrightarrow\Omega_R(\bar J)$, $(a,b)\longmapsto\bar b$.
The maps from the pure syzygies are the composites
\[
Z_I\lhook\joinrel\longrightarrow Z\xrightarrow{\rho_I}\Omega_R(\bar I),
\qquad
Z_J\lhook\joinrel\longrightarrow Z\xrightarrow{\rho_J}\Omega_R(\bar J).
\]
Define
\[
\Omega_R(\bar I)_{\mathrm{pure}}
=\operatorname{im}\bigl(\rho_I|_{Z_I}\bigr),
\qquad
\Omega_R(\bar I)_{\mathrm{mix}}
=\frac{\Omega_R(\bar I)}
       {\Omega_R(\bar I)_{\mathrm{pure}}}.
\]
We call the latter the module of \emph{mixed syzygies}.
Using $\rho_J$ gives the analogous definitions for $\Omega_R(\bar J)$.

\begin{lemma}\label{lem:mixed-syzygies}
There are isomorphisms of $R$-modules
\[
\Omega_R(\bar I)_{\mathrm{mix}}
\cong\frac{I\cap J}{I(I\cap J)},
\qquad
\Omega_R(\bar J)_{\mathrm{mix}}
\cong\frac{I\cap J}{J(I\cap J)}.
\]
In particular, these induce isomorphisms
\[
\frac{I\cap J}{\n(I\cap J)}
\cong
\frac{\Omega_R(\bar I)_{\mathrm{mix}}}
     {\m\Omega_R(\bar I)_{\mathrm{mix}}},
\qquad
\frac{I\cap J}{\n(I\cap J)}
\cong
\frac{\Omega_R(\bar J)_{\mathrm{mix}}}
     {\m\Omega_R(\bar J)_{\mathrm{mix}}}.
\]
\end{lemma}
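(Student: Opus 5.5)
The plan is to construct directly a surjection
\[
\varphi_I:\Omega_R(\bar I)\longrightarrow\frac{I\cap J}{I(I\cap J)}
\]
and to show that its kernel is exactly $\Omega_R(\bar I)_{\mathrm{pure}}$. The statement for $\bar J$ then follows by exchanging the roles of $I$ and $J$.

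\emph{Definition of $\varphi_I$.} Let $\bar a\in\Omega_R(\bar I)$, so that $\sum_i\bar a_i\bar f_i=0$ in $R=S/(I\cap J)$. Choose any lift $a\in S^r$. Then $\sum_i a_if_i\in I\cap J$, and we set $\varphi_I(\bar a)$ to be its class. Two lifts differ by some $c\in(I\cap J)S^r$, and this changes the sum by
\[
\sum_i c_if_i\in (I\cap J)I .
\]
Hence $\varphi_I$ is well defined. It is $S$-linear. Its target is annihilated by $I\cap J$, so it is an $R$-module and $\varphi_I$ is $R$-linear.

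\emph{Surjectivity.} Let $x\in I\cap J\subseteq I$ and write $x=\sum_i a_if_i$ with $a_i\in S$. Since $x$ maps to $0$ in $R$, the reduction $\bar a$ lies in $\Omega_R(\bar I)$, and by construction $\varphi_I(\bar a)=\bar x$.

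\emph{Kernel.} First, $\Omega_R(\bar I)_{\mathrm{pure}}\subseteq\ker\varphi_I$. Indeed, an element $(a,0)\in Z_I$ satisfies $\sum_i a_if_i=0$, so taking $a$ itself as the lift gives $\varphi_I(\bar a)=0$.

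Conversely, suppose $\varphi_I(\bar a)=0$. Then $\sum_i a_if_i\in I(I\cap J)$. Every element of $I(I\cap J)$ can be written as $\sum_i c_if_i$ with $c_i\in I\cap J$, since $I=(f_1,\dots,f_r)$; choose such $c$ for this sum. Then $(a-c,0)$ lies in $Z$, and hence in $Z_I$. Moreover $c\equiv0$ modulo $I\cap J$, so
\[
\rho_I(a-c,0)=\bar a .
\]
Thus $\bar a\in\Omega_R(\bar I)_{\mathrm{pure}}$, and $\varphi_I$ induces the isomorphism $\Omega_R(\bar I)_{\mathrm{mix}}\cong(I\cap J)/I(I\cap J)$.

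This kernel computation is the only step that needs care. The essential point is that the generators $f_i$ of $I$ allow us to absorb an element of $I(I\cap J)$ by a correction $c$ with coefficients in $I\cap J$, and such a correction is invisible in $R$.

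\emph{Reduction modulo $\m$.} Since $I$ and $J$ are proper ideals of the local ring $S$, we have $I\subseteq\n$, and therefore $I(I\cap J)\subseteq\n(I\cap J)$. Because $\m=\n R$, reducing the isomorphism modulo $\m$ gives
\[
\frac{\Omega_R(\bar I)_{\mathrm{mix}}}{\m\,\Omega_R(\bar I)_{\mathrm{mix}}}
\cong\frac{I\cap J}{I(I\cap J)+\n(I\cap J)}
=\frac{I\cap J}{\n(I\cap J)}.
\]
The same argument with $J\subseteq\n$ gives the statement for $\bar J$.
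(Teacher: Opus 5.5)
Your proof is correct and is essentially the paper's argument. The paper builds the map $I\cap J\to\Omega_R(\bar I)_{\mathrm{mix}}$ by factoring $\rho_I$ through $\pi$ and shows its kernel is $I(I\cap J)$, whereas you build the inverse map $\Omega_R(\bar I)\to (I\cap J)/I(I\cap J)$ and show its kernel is the pure part; the key step, absorbing an element of $I(I\cap J)$ by a correction with coefficients in $I\cap J$, is the same in both.
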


\begin{proof}
We prove the assertion for $I$. If $(a,b)\in\ker\pi$, then
$\sum_i a_i f_i=0$, so $\rho_I(a,b)=\bar a$ is a pure
$I$-syzygy. Thus the composite of $\rho_I$ with the quotient map
to $\Omega_R(\bar I)_{\mathrm{mix}}$ factors through $\pi$, giving
a surjection
\[
I\cap J\longrightarrow\Omega_R(\bar I)_{\mathrm{mix}}.
\]
Concretely, if $h=\sum_i a_i f_i\in I\cap J$, its image is
$[\bar a]$. Different expressions for $h$ differ by a pure
$I$-syzygy.

Suppose that $[\bar a]=0$. There is an $I$-syzygy $a'\in S^r$
such that $\bar a=\bar a'$. Hence $a_i-a_i'\in I\cap J$, and
\[
h=\sum_i(a_i-a_i')f_i\in I(I\cap J).
\]
Conversely, an element of $I(I\cap J)$ has an expression
$h=\sum_i a_i f_i$ with every $a_i\in I\cap J$, so its image
is zero. This identifies the kernel and proves the first
isomorphism. The second follows by interchanging $I$ and $J$.
Since $I,J\subseteq\n$, reducing these isomorphisms
modulo $\n$ gives the last assertions.
\end{proof}

\begin{theorem}\label{thm:mixed-syzygy-nonvanishing}
Let $M$ be a finitely generated $R$-module of positive depth with
unbalanced support, as in \cref{lem:positive-depth}. Write
\[
R=S/(I\cap J),\qquad A=S/I,\qquad B=S/J,
\]
where $(S,\n)$ is a Noetherian local ring and $I,J$ are the inverse
images of the wing ideals. If
\[
IJ\nsubseteq\n(I\cap J),
\]
then
\[
\operatorname{Tor}_2^R(M,B)\ne0.
\]
\end{theorem}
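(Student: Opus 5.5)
Since $\Tor_2^R(M,B)\cong\Tor_1^R(M,\bar J)$, we compute the latter from the chosen presentation $0\to\Omega_R(\bar J)\to R^s\to\bar J\to0$. Tensoring with $M$ gives
\[
\Tor_1^R(M,\bar J)\cong\ker\bigl(\Omega_R(\bar J)\otimes_R M\longrightarrow M^s\bigr).
\]
So the goal is to exhibit an element of $\Omega_R(\bar J)\otimes_R M$ that is nonzero and maps to zero in $M^s$.

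\textbf{Choosing the syzygy.} Since $IJ\nsubseteq\n(I\cap J)$, pick $f\in I$ and $g\in J$ with $fg\notin\n(I\cap J)$; we may take $f=f_i$ and $g=g_j$ to be generators. Write $h=fg\in I\cap J$. Then $h=\sum_j b_jg_j$ with $b=f e_j$, so the coefficient vector has entries in $I$. Thus $\bar b\in\Omega_R(\bar J)$ is a syzygy with coefficients in $\bar I$. By \cref{lem:mixed-syzygies}, its class in $\Omega_R(\bar J)_{\mathrm{mix}}/\m\Omega_R(\bar J)_{\mathrm{mix}}\cong (I\cap J)/\n(I\cap J)$ is the image of $h$, which is nonzero. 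Hence $\bar b\notin\m\Omega_R(\bar J)+\Omega_R(\bar J)_{\mathrm{pure}}$, and in particular $\bar b\notin\m\Omega_R(\bar J)$.

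\textbf{The Tor class.} Choose $x\in M\setminus\m M$, which is possible since $M\neq0$. The element $\bar b\otimes x$ maps to $(\bar b_1x,\dots,\bar b_sx)\in M^s$. This image is zero, because every $\bar b_j\in\bar I$ and $\bar IM=0$ by \cref{lem:positive-depth}. So $\bar b\otimes x$ defines a class in $\Tor_1^R(M,\bar J)$.

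\textbf{Nonvanishing.} It remains to show $\bar b\otimes x\neq0$ in $\Omega_R(\bar J)\otimes_R M$. Reduce modulo $\m$, i.e.\ map to
\[
(\Omega_R(\bar J)/\m\Omega_R(\bar J))\otimes_k(M/\m M).
\]
The image is $[\bar b]\otimes[x]$, a tensor of two nonzero vectors over $k$, hence nonzero. Therefore $\Tor_2^R(M,B)\neq0$.

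The main point needing care is the claim that $\bar b\notin\m\Omega_R(\bar J)$. This is exactly what \cref{lem:mixed-syzygies} provides: the lemma's isomorphism sends the class of $h$ to $[\bar b]$ precisely because $h=\sum_j b_jg_j$. It is also worth double-checking that $\rho_J$ uses the sign convention $\pi(a,b)=-\sum_j b_jg_j$. Signs are irrelevant here, since we only need the image to be nonzero.
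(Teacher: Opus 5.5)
Your proposal is correct and follows the paper's proof essentially step for step. As you do, the paper picks generators with $f_1g_1\notin\n(I\cap J)$, uses \cref{lem:mixed-syzygies} to see that the syzygy $\bar f_1e_1$ does not lie in $\m\Omega_R(\bar J)$, and tensors it with an element of $M\setminus\m M$; the resulting element maps to zero because $\bar IM=0$, and it is nonzero after reduction modulo $\m$.
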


\begin{proof}
After reordering the chosen generators, we may assume that
\[
f_1g_1\notin\n(I\cap J).
\]
The relation
\[
\sigma=\bar f_1e_1\in\Omega_R(\bar J)\subseteq R^s
\]
maps to a minimal generator of
$\Omega_R(\bar J)_{\mathrm{mix}}$ under
Lemma~\ref{lem:mixed-syzygies}, since its mixed class
corresponds to $f_1g_1$. It follows that
\[
\sigma\notin\m\Omega_R(\bar J).
\]

By \cref{lem:positive-depth}, one has $\bar I M=0$.
Dimension shifting identifies
\[
\operatorname{Tor}_2^R(M,B)
\cong
\ker\bigl(
M\otimes_R\Omega_R(\bar J)\longrightarrow M\otimes_R R^s
\bigr).
\]
Choose $m_0\in M\setminus\m M$.
The element $m_0\otimes\sigma$ maps to zero, since
$\bar f_1M=0$. It is nonzero: its image in
\[
(M/\m M)\otimes_k
\bigl(\Omega_R(\bar J)/\m\Omega_R(\bar J)\bigr)
\]
is the tensor product of two nonzero vectors.
It therefore gives a nonzero class in
$\operatorname{Tor}_2^R(M,B)$.
\end{proof}

The principal case has an affirmative answer by
\cref{thm:principal-signals} below, without the mixed-product
condition. The remaining open case of Question~\ref{q:second-tor} has
\[
 IJ\subseteq\n(I\cap J)
\]
with $\bar J$ nonprincipal. We now return to the original ring $R$
and the wing ideals $I,J\subseteq R$ of \cref{sec:geometry}.

\subsection{The torsion and the spine}
\label{sec:spine}
\label{sec:linked}

For a module $M$ with unbalanced support, the depth-zero case is
qualitatively different. Its finite-length torsion can accommodate the
nonvanishing Tor of its positive-depth part. To separate these
contributions, consider
\begin{equation}\label{eq:torsion-spine}
 0\longrightarrow T:=\H^0_\m(M)\longrightarrow M
 \longrightarrow N\longrightarrow0.
\end{equation}
We call $T$ the \emph{torsion} and $N$ the \emph{spine} of $M$.
Since $\dim M \geq 1$, the spine $N$ is nonzero, has positive depth, and
has the same support as $M$. In particular, it still has unbalanced
support. By \cref{lem:positive-depth},
\[
 IN=0,\qquad \Tor_1^R(N,A)\cong N\otimes_R I\neq0.
\]
Thus the spine is naturally an $A$-module, and $IM\subseteq T$.

How can the torsion hide the spine's nonzero first Tor signal?
The long exact Tor sequence associated to \cref{eq:torsion-spine}
provides connecting maps $\delta_1$ from $\Tor_1^R(N,A)$ to
$T\otimes_R A$ and $\delta_2$ from $\Tor_2^R(N,A)$ to
$\Tor_1^R(T,A)$. Exactness gives the precise cancellation criterion:
\begin{equation}\label{eq:connecting}
\Tor_1^R(M,A)=0
\quad\Longleftrightarrow\quad
\begin{cases}
\delta_1:\Tor_1^R(N,A)\longrightarrow T\otimes_R A
&\text{is injective},\\
\delta_2:\Tor_2^R(N,A)\longrightarrow\Tor_1^R(T,A)
&\text{is surjective}.
\end{cases}
\end{equation}

\begin{remark}\label{rem:second-tor-accommodation}
Suppose that $\Tor_2^R(N,B)$ is nonzero. If $\Tor_2^R(M,B)=0$, the
connecting map
\[
 \Tor_2^R(N,B)\longrightarrow\Tor_1^R(T,B)
\]
is injective, so $\Tor_1^R(T,B)$ is nonzero. If $\Tor_1^R(M,B)=0$
as well, this map is an isomorphism. Thus an affirmative answer to
Question~\ref{q:second-tor} would impose a second torsion--spine
accommodation, now against the collapsed wing.
\Cref{prop:principal-balance} makes this accommodation explicit for
principal quotients.
\end{remark}

We now show that if the first Tor vanishes, then the torsion has
the following structure.

\begin{theorem}\label{prop:structure}
The modules $M$ and $M_A$ have the same spine:
\[
 N\cong M_A/\H^0_\m(M_A).
\]
If $\Tor_1^R(M,A)=0$, then the torsion of $M$ fits into an exact sequence
\begin{equation}\label{eq:torsion-components}
 0\longrightarrow I\otimes_B M_B\longrightarrow T
 \longrightarrow\H^0_\m(M_A)\longrightarrow0.
\end{equation}
\end{theorem}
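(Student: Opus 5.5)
We first show that $M$ and $M_A$ have the same spine. Since $IN=0$ by \cref{lem:positive-depth} applied to $N$, the surjection $M\to N$ kills $IM$ and so factors through $M_A=M/IM$. This gives an exact sequence
\[
0\longrightarrow T/IM\longrightarrow M_A\longrightarrow N\longrightarrow0,
\]
using $IM\subseteq T$. The kernel $T/IM$ has finite length and $N$ has positive depth. Applying $\H^0_\m$, we get $\H^0_\m(M_A)=T/IM$ and $M_A/\H^0_\m(M_A)\cong N$. This part needs no Tor hypothesis, and it already yields the exact sequence
\[
0\longrightarrow IM\longrightarrow T\longrightarrow\H^0_\m(M_A)\longrightarrow0.
\]

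It remains to identify $IM$ with $I\otimes_B M_B$ when $\Tor_1^R(M,A)=0$. First, $I$ is naturally a $B$-module because $JI\subseteq I\cap J=0$, so
\[
I\otimes_R M\cong I\otimes_B (B\otimes_R M)=I\otimes_B M_B.
\]
Next, tensor $0\to I\to R\to A\to0$ with $M$ to obtain the exact sequence
\[
0\to\Tor_1^R(M,A)\to I\otimes_R M\to M\to M_A\to0.
\]
The image of the middle map is $IM$. Under the vanishing hypothesis, the natural map $I\otimes_R M\to IM$ is therefore an isomorphism, and hence $IM\cong I\otimes_B M_B$. Substituting this into the sequence above gives \eqref{eq:torsion-components}.

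The only point needing care is the first step, namely that $\H^0_\m(M_A)$ is exactly $T/IM$. This relies on $IN=0$, which comes from the positive depth of the spine together with its unbalanced support, and on the depth lemma applied to the short exact sequence. As a consistency check, $I\otimes_B M_B$ has finite length, since $M_B$ does.
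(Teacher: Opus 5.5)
Your proof is correct and follows the paper's argument. Both use $IM\subseteq T$ (from $IN=0$) to get $0\to IM\to T\to\H^0_\m(M_A)\to0$ together with $M_A/\H^0_\m(M_A)\cong N$, and both then use the Tor vanishing to identify $IM$ with $I\otimes_R M\cong I\otimes_B M_B$. The only cosmetic difference is in how $\H^0_\m(M_A)$ is identified: the paper applies $\H^0_\m(-)$ directly to $0\to IM\to M\to M_A\to0$, using that $IM$ has finite length, whereas you read off $\H^0_\m(M_A)=T/IM$ from $0\to T/IM\to M_A\to N\to0$.
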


\begin{proof}
We have $IM\subseteq T$, so $IM$ has finite length. Consider the exact
sequence
\[
 0\longrightarrow IM\longrightarrow M\longrightarrow M_A
 \longrightarrow0.
\]
Applying $\H^0_\m(-)$ to it gives
\[
 0\longrightarrow IM\longrightarrow T
 \longrightarrow\H^0_\m(M_A)\longrightarrow0
\]
and
\[
 M_A/\H^0_\m(M_A)\cong M/T=N.
\]
Now assume $\Tor_1^R(M,A)=0$. This vanishing identifies
$I\otimes_R M$ with $IM$. Since $IJ=0$, the $R$-module structure
on $I$ factors through $B$, so
\[
 I\otimes_R M\cong I\otimes_B M_B.
\]
This proves \cref{eq:torsion-components}.
\end{proof}

Since $M_B$ has finite length, $\H^0_\m(M_B)=M_B$.
The submodule $I\otimes_B M_B$ records the contribution from the
collapsed wing, while the quotient $\H^0_\m(M_A)$ is the torsion on the
supporting wing. We study quantitative consequences of this sequence in
\cref{sec:torsion-size}.

For the next proposition, impose the additional annihilator conditions
\begin{equation}\label{eq:linkage}
 (0)=I\cap J,\qquad \ann_R I=J,\qquad \ann_R J=I.
\end{equation}
We call $A$ and $B$ the \emph{linked wings}. Either wing may contain
more than one irreducible component. These conditions require no
Cohen--Macaulay hypothesis and hold for partitions of the minimal
primes of a reduced ring.

\begin{proposition}\label{prop:principal}
Assume \cref{eq:linkage}. If $I=(f)$ and $J=(g)$, then
every module $M$ satisfying \cref{eq:unbalanced} has
\[
 \Tor_1^R(M,A)\neq0.
\]
\end{proposition}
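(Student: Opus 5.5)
The plan is to compute $\Tor_1^R(M,A)$ from the periodic resolution of $A$, and then derive a contradiction from the Euler characteristic of the Koszul complex on $g$. Under \cref{eq:linkage} with $I=(f)$ and $J=(g)$, we have $fg=0$, $\ann_R f=(g)$ and $\ann_R g=(f)$. Hence
\[
 \cdots\xrightarrow{\;g\;}R\xrightarrow{\;f\;}R\xrightarrow{\;g\;}R\xrightarrow{\;f\;}R\longrightarrow A\longrightarrow0
\]
is a free resolution of $A$. Tensoring with $M$ gives
\[
 \Tor_1^R(M,A)\cong\frac{(0:_M f)}{gM}.
\]
So it suffices to show that $(0:_M f)\neq gM$. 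Suppose, for a contradiction, that $(0:_M f)=gM$.

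First we pin down the dimension. Since $M_B=M/gM$ has finite length by \cref{eq:unbalanced}, we get $\dim M\le\dim(M/gM)+1=1$. Together with $\dim M\geq1$, this forces $\dim M=1$. Thus $g$ is a system of parameters for $M$. Serre's theorem on Koszul multiplicities then gives
\[
 \chi(g;M)=\ell(M/gM)-\ell\bigl((0:_M g)\bigr)=e(g;M)>0.
\]
For this we need $(0:_M g)$ to have finite length. That holds because $g$ is a unit at every non-maximal prime of $\Supp M$, since $\Supp M\cap\V(g)=\{\m\}$; alternatively, it is part of the standard theory of $\chi$ for a parameter.

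Next we use the vanishing hypothesis. Multiplication by $f$ gives $fM\cong M/(0:_M f)=M/gM$, so
\[
 \ell(fM)=\ell(M/gM).
\]
Because $fg=0$, we have $fM\subseteq(0:_M g)$. Therefore
\[
 \ell(M/gM)=\ell(fM)\le\ell\bigl((0:_M g)\bigr),
\]
which says $\chi(g;M)\le0$. This contradicts the positivity above, so $\Tor_1^R(M,A)\neq0$.

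The main point to handle carefully is the positivity $e(g;M)>0$ for the one-element parameter system. This is standard: $\chi(g;M)$ equals the leading coefficient of the Hilbert–Samuel function of $M$ with respect to $(g)$, up to $\dim M=1$, so it is positive. The remaining steps are routine. The only use of linkage is in the exactness of the periodic resolution, namely $\ann f=(g)$. Depth plays no role, which is why the argument covers depth-zero $M$.
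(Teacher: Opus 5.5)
Your proof is correct. It is not the paper's main proof, but it is essentially the Euler-characteristic argument the paper records as an alternative in \cref{rem:gemini}.

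The paper's main proof is structural. It splits $M$ into torsion $T=\H^0_\m(M)$ and spine $N=M/T$. Regularity of $g$ on $N$ gives $\Tor_2^R(N,A)\cong(0:_Ng)/fN=0$. The cancellation criterion \cref{eq:connecting} then forces $\Tor_1^R(T,A)=0$. For finite-length $E$, one has $\ell(\Tor_1^R(E,A))=\ell(E)-\ell(fE)-\ell(gE)=\ell(\Tor_1^R(E,B))$, which uses both annihilator conditions. This converts the vanishing into $(0:_Tg)=fT$. Finally, the connecting map $\delta_1\colon N/gN\to T/fT$, $[n]\mapsto[f\widetilde n]$, has image $fM/fT\subseteq(0:_Tg)/fT=0$, contradicting its required injectivity.

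That route avoids Serre's theorem $\chi(g;M)=e(g;M)>0$. It also shows, inside the torsion--spine framework, exactly why the torsion cannot absorb the spine's nonzero first Tor.

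Your route is shorter and needs only $\ann_R f=(g)$ for the nonvanishing. It is also quantitative. Without assuming any vanishing, $\ell(M/gM)=\ell(fM)+\ell(\Tor_1^R(M,A))$. Under the linkage hypothesis, $\ell(0:_Mg)=\ell(fM)+\ell(\Tor_2^R(M,A))$. Together these give $\ell(\Tor_1^R(M,A))-\ell(\Tor_2^R(M,A))=e(g;M)>0$.
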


\begin{proof}
The annihilator equalities in \cref{eq:linkage} give alternating
resolutions whose differentials are multiplication by $f$ and $g$.
Thus, for every finite-length $R$-module $E$,
\begin{align*}
 \ell\bigl(\Tor_1^R(E,A)\bigr)
 &=\ell(0:_E f)-\ell(gE)\\
 &=\ell(E)-\ell(fE)-\ell(gE)\\
 &=\ell\bigl(\Tor_1^R(E,B)\bigr).
\end{align*}
Hence one Tor group vanishes if and only if the other does.

Set $T=\H^0_\m(M)$ and $N=M/T$. By
\cref{lem:positive-depth}, $fN=0$ and
\mbox{$\Tor_1^R(N,A)\neq0$}. Moreover, $g$ is a nonzerodivisor on $N$, so
\[
 \Tor_2^R(N,A)\cong(0:_N g)/fN=0.
\]
Suppose that $\Tor_1^R(M,A)=0$. By \cref{eq:connecting}, the map
$\delta_1:\Tor_1^R(N,A)\longrightarrow T/fT$ is injective, and
$\Tor_1^R(T,A)=0$. We will show that $\delta_1=0$.
The length equality gives $\Tor_1^R(T,B)=0$, or
\[
 (0:_T g)=fT.
\]
But $fM\subseteq T$ and $gfM=0$. Under the identification
$\Tor_1^R(N,A)\cong N/gN$, the connecting map is described by
\[
\begin{array}{ccc}
\Tor_1^R(N,A)\cong N/gN&\xrightarrow{\;\delta_1\;}&T/fT\\
{[n]}&\longmapsto&[f\widetilde n],
\end{array}
\]
where $\widetilde n\in M$ lifts $n$. Therefore
\[
 \operatorname{im}\delta_1
 =fM/fT\subseteq(0:_T g)/fT=0.
\]
This contradicts its injectivity because $\Tor_1^R(N,A)\neq0$.
\end{proof}

\begin{remark}\label{rem:gemini}
The preceding proof of \cref{prop:principal} is entirely structural.
The following alternative argument uses positivity of multiplicity
\cite{Ser}. Since $M/gM$ has finite length and $\dim M\geq1$, we have
$\dim M=1$ and
\[
 \chi(g;M)=\ell(M/gM)-\ell(0:_M g)>0.
\]
If $\Tor_1^R(M,A)=0$, then $0:_M f=gM$, so $M/gM\cong fM$.
But $fM\subseteq(0:_M g)$, giving
\[
 \chi(g;M)\leq0.
\]
This is a contradiction.
\end{remark}

One application of this obstruction is the following non-existence result.
\begin{corollary}\label{cor:gorenstein}
Suppose that $A$ and $B$ are linked Gorenstein wings. If an $R$-module
of finite projective dimension has unbalanced support with respect to
these wings, then $R$ is Cohen--Macaulay but not Gorenstein.
\end{corollary}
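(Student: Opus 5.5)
Suppose $M$ has finite projective dimension and unbalanced support with respect to linked Gorenstein wings $A=R/I$ and $B=R/J$. The first half of the conclusion comes from the reduction \cref{eq:nit-reduction}. The New Intersection Theorem gives $\pdim_R M\geq\dim B=\dim R$, since $B$ is a top-dimensional component. Auslander--Buchsbaum gives $\pdim_R M=\depth R-\depth_R M$. Together these force $\depth R=\dim R$, so $R$ is Cohen--Macaulay, and also $\depth_R M=0$. It remains to rule out that $R$ is Gorenstein, and we argue by contradiction.

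So assume in addition that $R$ is Gorenstein. Since $A$ is Cohen--Macaulay of dimension $d=\dim R$, the linkage conditions \cref{eq:linkage} identify $\Hom_R(A,R)\cong(0:_R I)=J$ as the canonical module of $A$. Because $A$ is Gorenstein, its canonical module is free of rank one over $A$. Thus $J\cong A$, so $J$ is cyclic, say $J=(g)$, with $\ann_R g=I$. Symmetrically, $\omega_B\cong I$ and $B$ Gorenstein give $I=(f)$ with $\ann_R f=J$. We are now exactly in the situation of \cref{prop:principal}.

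Finite projective dimension makes $M$ Tor-independent from $A$. Over the Gorenstein ring $R$, the maximal Cohen--Macaulay module $A$ has a complete resolution. Concretely, the periodic resolution $\cdots\xrightarrow{f}R\xrightarrow{g}R\xrightarrow{f}R\to A\to0$ is exact, so $A$ is totally reflexive. For such $A$ and any $M$ with $\pdim_R M<\infty$, one has $\Tor_i^R(M,A)=0$ for all $i\geq1$. To see this, write $A$ as a $d$-th syzygy of another maximal Cohen--Macaulay module $A'$ (possible since $A$ is totally reflexive). Then $\Tor_i^R(M,A)\cong\Tor_{i+d}^R(M,A')$, which vanishes because $i+d>\pdim_R M$. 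Here the periodicity makes this explicit. In particular $\Tor_1^R(M,A)=0$, which contradicts \cref{prop:principal}. Hence $R$ is not Gorenstein.

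The main obstacle is the step identifying the linked ideals with canonical modules. We need $\Hom_R(A,R)\cong\omega_A$, which uses that $R$ is Gorenstein and $A$ is Cohen--Macaulay of full dimension. We then need the annihilator equalities to turn $\omega_A\cong A$ into the statement that $J$ is principal with generator annihilated exactly by $I$. I would carry this out carefully, checking that a generator $g$ of $J$ satisfies $(0:_R g)=\ann_R J=I$.
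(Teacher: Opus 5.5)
Your proof is correct and follows the paper's argument. The NIT/Auslander--Buchsbaum reduction gives Cohen--Macaulayness; Gorenstein duality identifies $J\cong\omega_A$ and $I\cong\omega_B$, so both ideals are principal; and $\Tor_1^R(M,A)=0$ then contradicts \cref{prop:principal}. The only difference is how you obtain $\Tor_1^R(M,A)=0$: you use the $2$-periodic resolution of $A$, which makes $A$ an arbitrarily high syzygy of itself, whereas the paper cites the Buchsbaum--Eisenbud argument of \cref{obs:restriction}. Your route is a self-contained shortcut in this principal case.
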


\begin{proof}
By the New Intersection Theorem reduction in \cref{eq:nit-reduction},
$R$ is Cohen--Macaulay. Suppose that $R$ is Gorenstein. Then the canonical
modules of its maximal Cohen--Macaulay quotients $A$ and $B$ are
\[
 \omega_A\cong\Hom_R(A,R)=J,\qquad
 \omega_B\cong\Hom_R(B,R)=I.
\]
Thus $I$ and $J$ are principal. Proposition~\ref{obs:restriction} gives
$\Tor_1^R(M,A)=0$, contradicting Proposition~\ref{prop:principal}.
\end{proof}

In the example constructed in \cite{Ma}, the linked wings are Gorenstein
but the ambient ring is not, as required by the corollary.

\subsection{Principal quotients}\label{sec:principal-quotients}

Retain the general two-wing setup of \cref{sec:geometry}.
Principal quotients have simpler
homological algebra, giving easier access to the homological signals.
For each $g\in J$, the inclusion $\V(J)\subseteq\V(g)$ shows that the
principal support contains the collapsed wing.

For any $g\in R$, place the one-element Koszul complex $\Kos_R(g)$
in homological degrees $1$ and $0$. The truncation
triangle
\[
\shift(0:_R g)\longrightarrow \Kos_R(g)
\longrightarrow R/(g)\longrightarrow
\]
gives, after derived tensoring with $M$, an exact sequence
\begin{equation}\label{eq:principal-tor}
\begin{aligned}
0\longrightarrow \Tor_2^R(M,R/(g))
&\longrightarrow (0:_R g)\otimes_R M\\
&\xrightarrow{\;\mathrm{mult}\;} (0:_M g)
\longrightarrow \Tor_1^R(M,R/(g))\longrightarrow0.
\end{aligned}
\end{equation}

The following theorem shows that an $M$-regular principal quotient
from $J$ has vanishing first Tor and nonvanishing second Tor.

\begin{theorem}\label{thm:principal-signals}
Let $M$ be a finitely generated $R$-module of positive depth satisfying
\cref{eq:unbalanced}. Then $J$ contains $M$-regular elements, and
for every such $g$,
\[
\Tor_1^R(M,R/(g))=0,
\qquad
\Tor_2^R(M,R/(g))
\cong(0:_R g)\otimes_R M\ne0.
\]
In particular, if $J=(g)$, then $\Tor_2^R(M,B)\ne0$.
\end{theorem}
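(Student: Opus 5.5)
The plan is to follow the proof of \cref{lem:positive-depth}, and then read the two Tor groups off the exact sequence \cref{eq:principal-tor}. First, the existence of $M$-regular elements in $J$. Since $M_B=M/JM$ has finite length, $\V(J)\cap\Supp_R M=\{\m\}$. Positive depth means $\m\notin\operatorname{Ass}M$, so no associated prime of $M$ contains $J$. Prime avoidance then yields $g\in J$ outside every associated prime, that is, $g$ is $M$-regular.

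Now fix any $M$-regular $g\in J$. Then $(0:_M g)=0$, and \cref{eq:principal-tor} collapses. The surjection onto $\Tor_1^R(M,R/(g))$ comes from $0$, so $\Tor_1^R(M,R/(g))=0$. The map $\mathrm{mult}$ lands in $0$, so the kernel is all of $(0:_R g)\otimes_R M$, giving $\Tor_2^R(M,R/(g))\cong(0:_R g)\otimes_R M$.

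For nonvanishing, I use $IJ=0$, which gives $I\subseteq(0:_R g)$; as $I\neq0$, the ideal $(0:_R g)$ is a nonzero finitely generated module. Then $(0:_R g)\otimes_R M$ reduces modulo $\m$ to $\bigl((0:_R g)/\m(0:_R g)\bigr)\otimes_k(M/\m M)$, a tensor product of two nonzero $k$-vector spaces by Nakayama. Hence it is nonzero, and so is the tensor product itself. This is the same argument as the final step of \cref{lem:positive-depth}. The only point to watch is that $(0:_R g)$ is nonzero, and the containment $I\subseteq(0:_R g)$ settles it.

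Finally, if $J=(g)$, then $B=R/(g)$. Since $\dim M\geq1$ we have $M\neq0$, hence $M_B\neq0$, so $J$ is a proper nonzero ideal. Its generator $g$ must be $M$-regular: the first step provides an $M$-regular $g'\in J=(g)$, so $g'=rg$, and any zerodivisor $g$ on $M$ would make $g'$ one as well. The conclusion $\Tor_2^R(M,B)\neq0$ is then the special case just proved.

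I expect no step to be a serious obstacle. The only nontrivial input is the exact sequence \cref{eq:principal-tor}, which is already available in the paper.
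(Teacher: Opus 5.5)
Your proof is correct and follows the paper's argument: prime avoidance as in \cref{lem:positive-depth}, then collapsing \cref{eq:principal-tor} using $(0:_M g)=0$, then Nakayama applied to $(0:_R g)\supseteq I\neq0$. You also check that a generator of a principal $J$ is automatically $M$-regular, which spells out a step the paper calls immediate.
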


\begin{proof}
Prime avoidance gives the required elements, as in the proof of
\cref{lem:positive-depth}. For each such $g$, apply
\cref{eq:principal-tor}, noting that $(0:_M g)=0$ and
$(0:_R g)\supseteq I\ne0$. The tensor product is nonzero by
Nakayama's lemma. The principal-$J$ conclusion is immediate.
\end{proof}

If $g\in J$ is regular on $R/I$, then $(0:_R g)=I$.
Since $(g)\cong R/I$, dimension shifting gives
\[
\Tor_i^R(M,R/(g))
\cong\Tor_{i-1}^R(M,R/I)
\qquad(i\ge2).
\]
Thus these higher Tor groups are independent of the choice of $g$.
If $R/I$ is a domain, this holds for every nonzero $g\in J$.

In depth zero, both of the low-degree Tor signals may vanish.
By \cref{eq:principal-tor}, their simultaneous vanishing is equivalent
to the multiplication map
\[
(0:_R g)\otimes_R M\longrightarrow(0:_M g)
\]
being an isomorphism. The following proposition describes how the
torsion accommodates the spine's nonzero second Tor in this case.

\begin{proposition}\label{prop:principal-balance}
Let $T=\H^0_\m(M)$ and $N=M/T$, and choose $g\in J$ regular on $N$.
If
\[
\Tor_1^R(M,R/(g))=\Tor_2^R(M,R/(g))=0,
\]
then there is an exact sequence
\begin{equation}\label{eq:principal-balance}
0\longrightarrow(0:_R g)T
\longrightarrow(0:_T g)
\longrightarrow(0:_R g)\otimes_R N
\longrightarrow0,
\end{equation}
and the connecting map below is an isomorphism
\[
\Tor_2^R(N,R/(g))
\xrightarrow{\;\sim\;}
\Tor_1^R(T,R/(g))\ne0.
\]
\end{proposition}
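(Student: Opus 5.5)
We apply the long exact Tor sequence to the torsion--spine sequence \cref{eq:torsion-spine} with $R/(g)$, and compare it with the four-term sequences \cref{eq:principal-tor} for $T$, $M$ and $N$.

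\emph{The connecting isomorphism.} Since $g$ is regular on $N$, \cref{eq:principal-tor} for $N$ gives $\Tor_1^R(N,R/(g))=0$ and
\[
\Tor_2^R(N,R/(g))\cong(0:_R g)\otimes_R N.
\]
This module is nonzero by Nakayama's lemma: $N\neq0$, and $(0:_R g)\supseteq I\neq0$ because $IJ=0$. Now take the portion
\[
\Tor_2^R(M,R/(g))\to\Tor_2^R(N,R/(g))\to\Tor_1^R(T,R/(g))\to\Tor_1^R(M,R/(g))
\]
of the long exact sequence. Both outer terms vanish by hypothesis, so the connecting map is an isomorphism, and its target is nonzero.

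\emph{The exact sequence \cref{eq:principal-balance}.} Apply \cref{eq:principal-tor} to $T$:
\[
0\to\Tor_2^R(T,R/(g))\to(0:_R g)\otimes_R T\xrightarrow{\mathrm{mult}}(0:_T g)\to\Tor_1^R(T,R/(g))\to0.
\]
The image of the multiplication map is $(0:_R g)T$. Hence there is a short exact sequence
\[
0\to(0:_R g)T\to(0:_T g)\to\Tor_1^R(T,R/(g))\to0.
\]
Substituting the isomorphisms $\Tor_1^R(T,R/(g))\cong\Tor_2^R(N,R/(g))\cong(0:_R g)\otimes_R N$ from the first step yields \cref{eq:principal-balance}.

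\emph{Compatibility of maps.} The only point needing care is that the composite map $(0:_T g)\to(0:_R g)\otimes_R N$ is natural, not merely an abstract isomorphism of cokernels. I would check this with a diagram chase in \cref{eq:principal-tor}, which is functorial in $M$. An element $t\in(0:_T g)\subseteq(0:_M g)$ maps to zero in $\Tor_1^R(M,R/(g))=0$. Since the hypotheses make the multiplication map for $M$ an isomorphism, $t=\mathrm{mult}(\xi)$ for a unique $\xi\in(0:_R g)\otimes_R M$. The image of $t$ is then the image of $\xi$ in $(0:_R g)\otimes_R N$.

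Exactness can also be checked directly at this level. If $\xi$ maps to zero in $(0:_R g)\otimes_R N$, then $\xi$ comes from $(0:_R g)\otimes_R T$, so $t\in(0:_R g)T$. For surjectivity, any element of $(0:_R g)\otimes_R N$ lifts to some $\xi$ because tensor is right exact. Then $\mathrm{mult}(\xi)$ lies in $(0:_M g)$, and its image in $N$ lies in $(0:_N g)=0$, so $\mathrm{mult}(\xi)\in T$. This direct description avoids matching connecting homomorphisms, and I expect it to be the cleanest route. Its main obstacle is the bookkeeping showing that the kernel is exactly $(0:_R g)T$, which the argument above handles.
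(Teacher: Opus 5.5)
Your proposal is correct. The direct argument in your last paragraph is essentially the paper's own proof: multiplication identifies $(0:_R g)\otimes_R M$ with $(0:_M g)=(0:_T g)$, and tensoring the torsion--spine sequence with $(0:_R g)$ then gives the sequence. The connecting isomorphism comes, as in the paper, from the Tor sequence together with $\Tor_2^R(N,R/(g))\cong(0:_R g)\otimes_R N\neq0$. Your first route, via the four-term sequence for $T$, is also valid: the statement only asserts that some exact sequence exists, so the naturality check is not needed.
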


\begin{proof}
Regularity of $g$ on $N$ gives $(0:_M g)=(0:_T g)$.
By \cref{eq:principal-tor}, multiplication identifies
$(0:_R g)\otimes_R M$ with $(0:_T g)$.
Tensoring $T\to M\to N\to0$ with $(0:_R g)$ now gives
\cref{eq:principal-balance}, since the image of
$(0:_R g)\otimes_R T$ is $(0:_R g)T$.
The connecting isomorphism follows from the Tor sequence and
\cref{thm:principal-signals} applied to $N$.
\end{proof}

Thus the torsion accommodates the spine's nonzero second Tor through
its $g$-annihilated part. When $(0:_R g)=I$, multiplication by $g$
identifies $M_A$ with $gM$, and the structural sequence
\cref{eq:torsion-components} takes the form
\[
0\longrightarrow(0:_T g)\longrightarrow T
\xrightarrow{\;g\;}gT\longrightarrow0,
\]
with
\[
(0:_T g)\cong I\otimes_B M_B,
\qquad
gT\cong\H^0_\m(M_A).
\]
The collapsed-wing contribution is the kernel of $g$ on $T$, and
the supporting-wing torsion is its image.

\section{Cohen--Macaulay wings and finite projective dimension}
\label{sec:setup}

We now retain the linked two-wing setup of \cref{sec:linked} and assume
that $R$, $A$, and $B$ are Cohen--Macaulay of dimension $d$.
Cohen--Macaulayness is an additional hypothesis on the wings, even when
$R$ is reduced. When $R$ is Gorenstein, linkage makes the
Cohen--Macaulayness of the two wings equivalent \cite{PS2}.
We continue to write $M_A=M\otimes_R A$ and $M_B=M\otimes_R B$ for
modules satisfying \cref{eq:unbalanced}, and impose finite projective
dimension separately.

The partition of the primary components ensures that $I+J$ lies in no
minimal prime of $R$, so $\dim C\leq d-1$. The depth lemma applied to
\cref{eq:ring-patching} gives $\depth C\geq d-1$. Thus the
intersection ring $C$ is Cohen--Macaulay of dimension $d-1$.

Finite projective dimension forces Tor-independence from both wings,
making the structural results of \cref{sec:linked} applicable.

\begin{proposition}\label{obs:restriction}
Let $R$, $A=R/I$, and $B=R/J$ be Cohen--Macaulay of dimension $d$
in the linked two-wing setup of \cref{sec:linked}. If $M$ has finite
projective dimension over $R$ and satisfies
$\dim M_A\geq1$ and $\dim M_B=0$, then
$M_A$ and $M_B$ have finite projective dimension over $A$ and $B$,
respectively, and $M_B$ has finite length. Moreover,
\[
 \pdim_R M=\pdim_A M_A=\pdim_B M_B=d,
\]
$\depth_R M=0$, and
\[
 \begin{gathered}
 \Tor_i^R(M,A)=\Tor_i^R(M,B)=0\quad\text{for }i\geq1,\\
 \Tor_i^R(M,C)=0\quad\text{for }i\geq2,\qquad
 \Tor_1^R(M,C)\neq0.
 \end{gathered}
\]
\end{proposition}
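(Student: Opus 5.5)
Since $A$ and $B$ are Cohen--Macaulay of dimension $d$, they are maximal Cohen--Macaulay $R$-modules. The plan is to get Tor-independence from the vanishing of Tor against maximal Cohen--Macaulay modules, and then read off the remaining claims with Auslander--Buchsbaum and the New Intersection Theorem.

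\textbf{Step 1: Tor against the wings.} Let $F$ be a finite free resolution of $M$ with $\pdim_R M=p$. By Auslander--Buchsbaum, $p=d-\depth_R M\le d$, since $R$ is Cohen--Macaulay. For a maximal Cohen--Macaulay module $X$ of dimension $d$, the complex $F\otimes_R X$ is a bounded complex of direct sums of copies of $X$ of length at most $d$. Its homology in degrees $i\ge1$ is supported where $F_\p$ fails to be exact after tensoring; I will do induction on $\dim\Supp\bigl(\bigoplus_{i\ge1}\Tor_i^R(M,X)\bigr)$, localizing at a minimal prime of that support. At such a prime $\p$, the higher homology has finite length, and the Acyclicity Lemma of Peskine--Szpiro applies because $\depth X_\p=\dim R_\p\ge\operatorname{length}(F_\p)$. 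This gives exactness. (Equivalently, one can use that $\Tor_i^R(M,X)=0$ for $i\ge1$ whenever $\pdim M<\infty$ and $X$ is maximal Cohen--Macaulay.) Hence $\Tor_i^R(M,A)=\Tor_i^R(M,B)=0$ for $i\ge1$. It follows that $F\otimes_R A$ and $F\otimes_R B$ are finite free resolutions of $M_A$ and $M_B$, so $\pdim_A M_A\le p$ and $\pdim_B M_B\le p$.

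\textbf{Step 2: the numerics.} $M_B$ has finite length because $\dim M_B=0$. Since $M_B\ne0$ has finite length and finite projective dimension over the Cohen--Macaulay ring $B$, Auslander--Buchsbaum gives $\pdim_B M_B=\depth B-0=d$. Combined with Step 1 this yields $d\le p\le d$, so $p=d$, and then $\depth_R M=0$. For $M_A$, the resolution $F\otimes A$ is minimal, because $F$ was chosen minimal and reduction modulo $\m$ commutes with it. Hence $\pdim_A M_A=p=d$. The same minimality argument gives $\pdim_B M_B=d$ directly, consistent with the computation above.

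\textbf{Step 3: Tor against $C$.} Apply $M\otimes^{\sf L}_R-$ to \cref{eq:ring-patching}. The long exact sequence, together with Step 1, gives $\Tor_i^R(M,C)\cong\Tor_{i-1}^R(M,R)=0$ for $i\ge2$. In low degrees it gives
\[
0\to\Tor_1^R(M,C)\to M\to M_A\oplus M_B\to M\otimes_R C\to0 .
\]
If $\Tor_1^R(M,C)$ vanished, then $M\to M_A\oplus M_B$ would be injective, so $IM\cap JM=0$.

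\textbf{Step 4: nonvanishing of $\Tor_1^R(M,C)$ (main obstacle).} I see two routes.
\begin{enumerate}
\item A cleaner route is via \cref{prop:structure}. Since $\Tor_1^R(M,A)=0$, the module $I\otimes_B M_B\cong IM$ injects into $T$. Dually, $JM\cong J\otimes_A M_A$. The set $\Supp(J\otimes_A M_A)$ contains $\Supp M_A$ minus $\V(J)$, because $J_\p=R_\p$ for $\p\notin\V(J)$. So $JM$ has dimension $\ge1$.
\item The second route uses $\Tor_1^R(M,C)=\ker\bigl(M\to M_A\oplus M_B\bigr)=IM\cap JM$. I will show this is nonzero by a length/socle argument: $IM$ is a nonzero finite-length submodule (nonzero since $I\otimes_B M_B\neq0$ by Nakayama). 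I then need $IM\cap JM\ne0$. I would try to take a socle element of $T$. Since $T\ne0$ and $JT\subseteq JM$, it suffices to produce a nonzero element of $IM\cap JM$. Note that $\Tor_1^R(M,C)=0$ would make $M_C$ have finite projective dimension $\le d$ over $R$ with $\Tor$-rigidity, while $\dim C=d-1$.
\end{enumerate}
My first attempt for the second route is the derived compatibility: if $\Tor_1^R(M,C)=0$, then $M\otimes^{\sf L}_R C$ is the module $M_C$. It is simultaneously $M_A\otimes^{\sf L}_A C$ and $M_B\otimes^{\sf L}_B C$. The latter has finite length, and $\Tor_i^B(M_B,C)$ for $i\ge1$ must then vanish. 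But $C=B/IB$ with $IB\cong\omega_B$-like ideal of height $1$ containing a $B$-regular element, so $\Tor_1^B(M_B,C)\supseteq(0:_{M_B} x)\ne0$ for finite-length $M_B$. This gives the contradiction. This last step, finding a $B$-regular element in $I$ that annihilates enough, is where I expect the real work.
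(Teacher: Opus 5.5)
Your Steps 1--3 are sound. Step 1 replaces the paper's Buchsbaum--Eisenbud grade argument by the acyclicity lemma, which is a fine alternative. One fix is needed there: at a minimal prime $\p$ of the higher Tor support, use a minimal resolution of $M_\p$, of length $\pdim_{R_\p}M_\p\leq\dim R_\p$, rather than $F_\p$, whose length $p$ may exceed $\dim R_\p$.

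The genuine gap is Step 4: neither route establishes $\Tor_1^R(M,C)\neq0$. Route (1) ends with $\dim JM\geq1$, which says nothing about whether $IM\cap JM$ is nonzero. In route (2), you assert $\Tor_1^B(M_B,C)\supseteq(0:_{M_B}x)$ for a $B$-regular $x\in IB$. This is unjustified when $IB$ is not principal, because the natural map $\Tor_1^B(M_B,B/(x))\to\Tor_1^B(M_B,B/IB)$ need not be injective. Take $z\in IB$ and $m\in M_B$ with $xm=0$. The class of $x\otimes zm$ corresponds to $zm$, possibly nonzero, in $(x)\otimes_B M_B\cong M_B$. Yet its image in $IB\otimes_B M_B$ is $x\otimes zm=z\otimes xm=0$. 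So the contradiction is not obtained.

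The missing idea is Auslander--Buchsbaum over $C$, and you already have every ingredient. By Step 3, $\Tor_i^R(M,C)=0$ for $i\geq2$. If also $\Tor_1^R(M,C)=0$, then $F\otimes_R C$ is acyclic, hence a free resolution over $C$ of $M\otimes_R C$, which is nonzero by Nakayama. It is minimal, because the differentials of $F$ have entries in $\m$. It has length exactly $d$, because $F_d\otimes_R C\neq0$. Since $C$ is Cohen--Macaulay of dimension $d-1$ (only $\depth C\leq\dim C\leq d-1$ is needed), Auslander--Buchsbaum over $C$ gives $\pdim_C(M\otimes_R C)\leq d-1$, a contradiction.

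In your derived formulation this is the same statement: $\Tor_i^B(M_B,C)=0$ for $i\geq1$ would make $(F\otimes_R B)\otimes_B C$ a minimal $C$-free resolution of length $d$. Your remark that $M_C$ would then have a length-$d$ resolution while $\dim C=d-1$ was one step away from this. This is exactly how the paper concludes.
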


\begin{proof}
Let $F$ be a finite minimal free resolution of $M$. The determinantal
ideals of its differentials satisfy the grade conditions in the
Buchsbaum--Eisenbud acyclicity criterion \cite{BE}. Since $A$ and $B$
are maximal Cohen--Macaulay over $R$, an $R$-regular sequence in any
of these ideals is also regular on either wing. The same grade
conditions therefore hold after tensoring with $A$ or $B$, so
$F\otimes_R A$ and $F\otimes_R B$ are acyclic in positive degrees.
Their differentials still have entries in the maximal ideals, so these
resolutions remain minimal. Since $M_B$ is nonzero and has finite
length, Auslander--Buchsbaum gives
$\pdim_R M=\pdim_A M_A=\pdim_B M_B=d$ and $\depth_R M=0$.
The vanishing against $C$ follows from \cref{eq:ring-patching}.
If $\Tor_1^R(M,C)=0$ as well, then $F\otimes_R C$ would be a minimal
$C$-free resolution of $M\otimes_R C$ of length $d$, contradicting
Auslander--Buchsbaum over the ring $C$ of depth $d-1$.
\end{proof}

Thus finite projective dimension forces nonzero finite-length torsion
and vanishing Tor against both wings. In particular, the single Tor
vanishing required in \cref{prop:structure} is automatic, so its
structural sequence applies to every such module.

\subsection{Intersection multiplicity on the two wings}

Suppose that $M$ has finite projective dimension.
By \cref{obs:restriction}, its derived restrictions to $A$ and
$B$ are $M_A$ and $M_B$, respectively. Hence
\begin{equation}\label{eq:derived-interface}
 G:=M\lotimes_R C
 \simeq M_A\lotimes_A C
 \simeq M_B\lotimes_B C.
\end{equation}
The perfect $C$-complex $G$ has finite-length homology.

For finitely generated $S$-modules $U,V$ with $\pdim_S U<\infty$ and
$\ell(U\otimes_S V)<\infty$, set
\[
 \chi^S(U,V)=\sum_i(-1)^i\ell\bigl(\Tor_i^S(U,V)\bigr).
\]
By \emph{generalized vanishing} we mean the assertion that
\[
 \chi^S(U,V)=0
 \qquad\text{if}\qquad
 \dim U+\dim V<\dim S,
\]
and by \emph{generalized positivity} we mean the assertion that
\[
 \chi^S(U,V)>0
 \qquad\text{if}\qquad
 \dim U+\dim V=\dim S.
\]
These extend Serre's \cite{Ser} vanishing and positivity assertions to
possibly singular local rings, with finite projective dimension
assumed only for $U$.

\begin{proposition}\label{prop:bad-intersection}
Suppose that $M$ has finite projective dimension and satisfies
\cref{eq:unbalanced}. The two Euler characteristics agree:
\begin{equation}\label{eq:chi-matching}
 \chi^A(M_A,C)=\chi^B(M_B,C).
\end{equation}
If $\dim M=1$, then either generalized vanishing fails for
$(M_B,C)$ over $B$, or generalized positivity fails for
$(M_A,C)$ over $A$. If $\dim M\geq2$, then the pair
$(M_A,C)$ violates the dimension inequality over $A$.
\end{proposition}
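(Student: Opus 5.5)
The plan is to compute both Euler characteristics from the perfect $C$-complex $G$ of \cref{eq:derived-interface}. By \cref{obs:restriction}, $M_A$ and $M_B$ have finite projective dimension over $A$ and $B$. Take the minimal free resolution $F$ of $M$. Then $F\otimes_R A$ and $F\otimes_R B$ are free resolutions of $M_A$ and $M_B$, so
\[
 \Tor_i^A(M_A,C)\cong\H_i(F\otimes_R C)\cong\Tor_i^B(M_B,C)\qquad(i\geq0).
\]
These groups have finite length: $M_B\otimes_B C$ is a quotient of the finite-length module $M_B$, and $\Tor_i^B(M_B,C)$ is a subquotient of a direct sum of copies of $M_B$, obtained by computing it with a resolution of $C$. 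Hence both $\chi^A(M_A,C)$ and $\chi^B(M_B,C)$ are defined. Both equal $\sum_i(-1)^i\ell(\H_i(G))$, which gives \cref{eq:chi-matching}. By \cref{obs:restriction}, only $i=0,1$ contribute.

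Next comes the dimension bookkeeping. We have $\dim A=\dim B=d$ and $\dim C=d-1$. For the collapsed wing, $\dim M_B+\dim C=d-1<d=\dim B$, so generalized vanishing for $(M_B,C)$ would force $\chi^B(M_B,C)=0$. For the supporting wing, $\dim M_A=\dim M$.

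Suppose $\dim M=1$. Then $\dim M_A+\dim C=d=\dim A$, so generalized positivity for $(M_A,C)$ over $A$ would give $\chi^A(M_A,C)>0$. By \cref{eq:chi-matching}, the two assertions contradict each other, so at least one of them fails.

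Suppose $\dim M\geq2$. Then $\dim M_A+\dim C\geq d+1>\dim A$. Moreover $\Supp M_A\cap\Supp C=\{\m\}$, since $M_A\otimes_A C$ is a quotient of $M_B$ and so has finite length. Also $\pdim_A M_A<\infty$. Together these say the pair $(M_A,C)$ violates the inequality $\dim M_A+\dim C\leq\dim A$ over $A$.

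The only delicate point is confirming the finite-length hypothesis and Tor-independence needed to define both characteristics. Both follow from \cref{obs:restriction} and the support condition $\dim M_B=0$, so I expect no real obstacle; the main care is making the identification of both Tor groups with $\H_i(F\otimes_R C)$ explicit.
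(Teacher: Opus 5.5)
Your proposal is correct and follows the paper's proof. You obtain the equality of Euler characteristics from the common derived restriction $G$, and then use the same dimension count, $\dim M_B+\dim C=d-1$ and $\dim M_A+\dim C=\dim M+d-1$, to settle the two cases. You also spell out details the paper leaves implicit: the identification of both Tor groups with $\H_i(F\otimes_R C)$, and the finite-length, support and finite-projective-dimension checks that make the characteristics defined and the dimension-inequality claim meaningful.
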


\begin{proof}
The equality of Euler characteristics follows from
\cref{eq:derived-interface}. Since
\[
 \dim M_B=0,\qquad \dim M_A=\dim M,\qquad \dim C=d-1,
\]
we have
\[
 \dim M_B+\dim C=d-1,\qquad
 \dim M_A+\dim C=\dim M+d-1.
\]
If $\dim M=1$, generalized vanishing over $B$ and generalized positivity
over $A$ would force the common Euler characteristic to be both zero and
positive. If $\dim M\geq2$, then
\[
 \dim M_A+\dim C>d=\dim A,
\]
so $(M_A,C)$ violates the dimension inequality over $A$.
\end{proof}

\subsection{An example with a regular supporting wing}\label{susec:variation}

We illustrate the torsion--spine structure with a variation on the
example in \cite{Ma} whose supporting wing is regular. Following
\cref{prop:bad-intersection}, we begin with a finite-length module on
the collapsed wing for which generalized vanishing fails. Such examples
were first constructed by Dutta, Hochster, and McLaughlin \cite{DHM}.
We use their length-$15$ module, together with the derived-restriction
computation in \cite{Ma}, and dualize it to obtain the collapsed-wing
module used here.

Let
\[
 B=\frac{k[[x,y,z,w]]}{(xw-yz)},\qquad
 P=(x,z),\qquad C=B/P\cong k[[y,w]].
\]
Let $L^\vee$ be the length-$15$ module of finite projective dimension
denoted by $L^\vee$ in \cite[Section~2.1]{Ma}.
We use homological grading, with $\H_i(\shift X)=\H_{i-1}(X)$.
The calculation in \cite[Section~2.1]{Ma} and the argument with minimal
Betti numbers in \cite[Remark~7]{Ma}, applied before adding the Koszul
summand, give
\[
 L^\vee\lotimes_B C\simeq\shift k^6\oplus k^5.
\]

\begin{lemma}\label{lem:dhm-positive-interface}
The $B$-module
\[
 M_B:=\Ext_B^3(L^\vee,B)
\]
has length $15$, finite projective dimension, and
\begin{equation}\label{eq:dhm-positive-interface}
 M_B\lotimes_B C\simeq k^6\oplus\shift k^5.
\end{equation}
\end{lemma}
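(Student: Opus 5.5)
Write $(-)^\dagger=\RHom_B(-,B)$. Since $B$ is a three-dimensional Gorenstein hypersurface and $L^\vee$ has finite length, local duality gives $\Ext^i_B(L^\vee,B)=0$ for $i\neq 3$. Hence $(L^\vee)^\dagger\simeq\shift^{-3}M_B$ with $M_B=\Ext^3_B(L^\vee,B)$, and Matlis duality gives $\ell(M_B)=\ell(L^\vee)=15$. Finite projective dimension also transfers. If $F$ is a finite free resolution of $L^\vee$, then $\Hom_B(F,B)$ is a finite free complex whose only homology is $M_B$ in cohomological degree $3$. Shifting, we get a finite free resolution of $M_B$, and in fact $\pdim_B M_B=3$.

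For the derived restriction, I would use the standard identity for perfect complexes:
\[
 (L^\vee)^\dagger\lotimes_B C\simeq \RHom_C\bigl(L^\vee\lotimes_B C,\;C\bigr).
\]
This holds because $\Hom_B(F,B)\otimes_B C\cong\Hom_C(F\otimes_B C,C)$ termwise for a finite free complex $F$. Substituting $L^\vee\lotimes_B C\simeq\shift k^6\oplus k^5$ gives
\[
 \shift^{-3}\bigl(M_B\lotimes_B C\bigr)\simeq \RHom_C(\shift k^6\oplus k^5,\,C).
\]
The ring $C\cong k[[y,w]]$ is regular of dimension $2$, so $\RHom_C(k,C)\simeq\shift^{-2}k$. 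Therefore $\RHom_C(\shift k^6,C)\simeq\shift^{-3}k^6$ and $\RHom_C(k^5,C)\simeq\shift^{-2}k^5$. Applying $\shift^{3}$ yields
\[
 M_B\lotimes_B C\simeq k^6\oplus\shift k^5,
\]
which is \cref{eq:dhm-positive-interface}.

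The main step to check carefully is the shift bookkeeping, with the homological grading convention $\H_i(\shift X)=\H_{i-1}(X)$. Concretely, one must confirm that $\RHom_C(\shift X,C)\simeq\shift^{-1}\RHom_C(X,C)$ and that $(L^\vee)^\dagger$ sits as $\shift^{-3}M_B$ and not $\shift^{3}M_B$. As a sanity check, I would compare homological degrees. The result must be concentrated in degrees $0$ and $1$, since $M_B\otimes_B C$ sits in degree $0$ and $\pdim_C$ plays no role. We also need $\Tor_i^B(M_B,C)=0$ for $i\ge 2$, which holds because $C$ has projective dimension $1$ over $B$: it is a maximal Cohen--Macaulay module over a hypersurface, and in fact $P$ is not principal, so one should instead appeal to the derived duality itself. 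The Euler characteristic then flips sign from $-1$ to $+1$, consistent with the title ``positive interface.''
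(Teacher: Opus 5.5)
Your proposal is correct and follows the paper's proof essentially verbatim: local duality gives the length, dualizing a finite free resolution gives finite projective dimension, and derived base change $\RHom_B(L^\vee,B)\lotimes_B C\simeq\RHom_C(L^\vee\lotimes_B C,C)$ together with $\RHom_C(k,C)\simeq\shift^{-2}k$ gives the interface, with the same shift bookkeeping. The only blemish is your closing sanity check: $C=B/P$ does not have projective dimension one over $B$ (it has dimension two, so it is not maximal Cohen--Macaulay, and it has infinite projective dimension because $P$ is not principal), but no separate Tor vanishing is needed, since the derived computation already shows that $M_B\lotimes_B C$ has homology only in degrees $0$ and $1$.
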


\begin{proof}
Since $B$ is Gorenstein of dimension three, dualizing a finite free
resolution of $L^\vee$ shows that $M_B$ has finite projective dimension.
Local duality identifies $M_B$ with the Matlis dual of $L^\vee$, so
$\ell(M_B)=15$. Derived base change gives
\[
\begin{aligned}
 M_B\lotimes_B C
 &\simeq\shift^3\RHom_C(L^\vee\lotimes_B C,C)\\
 &\simeq\shift^3\RHom_C(\shift k^6\oplus k^5,C)\\
 &\simeq k^6\oplus\shift k^5,
\end{aligned}
\]
where the last isomorphism uses $\RHom_C(k,C)\simeq\shift^{-2}k$.
\end{proof}

To construct the other wing, set
\[
 A=C[[u]],\qquad N=A/(y,w)\cong k[[u]],\qquad M_A=N\oplus k^5.
\]
Since $A$ is regular, $M_A$ has finite projective dimension.
Since $N\lotimes_A C\simeq k$ and
$k\lotimes_A C\simeq k\oplus\shift k$, we have
\[
 M_A\lotimes_A C\simeq k^6\oplus\shift k^5
 \simeq M_B\lotimes_B C.
\]
The patching method in \cite{Ma} therefore produces a module $M$ of finite
projective dimension over the Cohen--Macaulay fiber product
\[
 R=A\times_C B
 \cong\frac{k[[x,y,z,w,u]]}{(xw-yz,ux,uz)}.
\]
As in \cite[Section~2.3]{Ma}, the homology sequence of the patching
triangle shows that the patched perfect complex has homology only in
degree zero.
By \cref{prop:structure}, the torsion--spine decomposition is
\begin{equation}\label{eq:dhm-spine}
 0\longrightarrow T\longrightarrow M\longrightarrow k[[u]]
 \longrightarrow0,
\end{equation}
with
\begin{equation}\label{eq:dhm-torsion}
 0\longrightarrow P\otimes_B M_B\longrightarrow T
 \longrightarrow k^5\longrightarrow0.
\end{equation}
Tensoring $0\to P\to B\to C\to0$ with $M_B$ and using
\cref{lem:dhm-positive-interface} gives the exact sequence
\[
 0\longrightarrow k^5\longrightarrow P\otimes_B M_B
 \longrightarrow M_B\longrightarrow k^6\longrightarrow0.
\]
Taking lengths here and in \cref{eq:dhm-torsion} gives
\[
 \ell(P\otimes_B M_B)=5+15-6=14,\qquad \ell(T)=14+5=19.
\]

The ambient ring has the same presentation as in \cite{Ma}, but the
modules on the wings differ. Ma uses the hypersurface as the supporting
wing and the regular ring as the collapsed wing, with common derived
restriction $k^6\oplus\shift k^6$ and Euler characteristic zero.
Here the regular wing supports $M_A=k[[u]]\oplus k^5$, while the
hypersurface wing carries $M_B=\Ext_B^3(L^\vee,B)$. Their common
derived restriction is $k^6\oplus\shift k^5$, and
\[
 \chi^A(M_A,C)=\chi^B(M_B,C)=6-5=1.
\]
Thus the alternative in \cref{prop:bad-intersection} is realized by
ordinary positivity on the regular $A$-wing and failure of generalized
vanishing on the collapsed $B$-wing.

\begin{remark}
These examples also exhibit unexpected lifting behavior. By \cref{obs:restriction}, $M$ is a
homological lift of $M_B$ along
$R\onto B$: its restriction is $M_B$ and $\Tor_i^R(M,B)=0$ for
$i\geq1$. Nevertheless,
\[
 \dim R=\dim B=d,\qquad \dim M>0=\dim M_B,
\]
so this lift fails the dimension equality appearing in the definition
of a Serre lift \cite[Definition~1.4]{KS}. In contrast, over a regular
source every homological lift is a Serre lift
\cite[Proposition~1.6]{KS}.

The examples concern the dimension of a particular lift; they do not
show that the target admits no lift satisfying the dimension equality.
More generally, what hypotheses on a singular source ensure that
homological lifts satisfy this equality?
\end{remark}

\section{A numerical torsion--spine balance}

Retain the linked Cohen--Macaulay setup of \cref{sec:setup}.
Since $M_B$ has finite length, the torsion on the two wings is
$M_B=\H^0_{\m}(M_B)$ and $\H^0_{\m}(M_A)$, respectively.
Derived matching yields the following numerical balance between the
torsion and the spine.

\begin{proposition}\label{prop:numerical-torsion-spine-balance}
Let $M$ have finite projective dimension over $R$ and satisfy
\cref{eq:unbalanced}, and set $N=M/\H^0_{\m}(M)$. Then
\begin{equation}\label{eq:truncated-tor-balance}
\begin{aligned}
 &\sum_{i=0}^{2}(-1)^i\ell\bigl(\Tor_i^A(N,C)\bigr)\\
 &\qquad=
 \bigl[\ell(M_B\otimes_B C)-\ell\bigl(\Tor_1^B(M_B,C)\bigr)\bigr]\\
 &\qquad\quad-
 \bigl[\ell(\H^0_{\m}(M_A)\otimes_A C)
       -\ell\bigl(\Tor_1^A(\H^0_{\m}(M_A),C)\bigr)\bigr].
\end{aligned}
\end{equation}
\end{proposition}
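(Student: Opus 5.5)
The approach is to compute the Euler characteristic of the perfect $C$-complex $G$ from \cref{eq:derived-interface} in two ways, once through the collapsed wing and once through the supporting wing, and then split the supporting side along the torsion--spine sequence on $A$.

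\emph{The collapsed side.} By \cref{obs:restriction}, $M_B$ has finite projective dimension over $B$. Since $B$ and $C$ are Cohen--Macaulay of dimensions $d$ and $d-1$, and $C=B/\bar I$ with $\bar I$ maximal Cohen--Macaulay over $B$ (via $0\to I\to B\to C\to 0$), we get $\pdim_B C=1$. Hence $\Tor_i^B(M_B,C)=0$ for $i\geq2$, and
\[
\chi(G)=\ell(M_B\otimes_B C)-\ell\bigl(\Tor_1^B(M_B,C)\bigr).
\]
Symmetrically, $\pdim_A C=1$, because $J$ is maximal Cohen--Macaulay over $A$ by the depth lemma applied to $0\to J\to A\to C\to0$, using $\depth C=d-1$.

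\emph{The supporting side.} Because $\pdim_A C=1$, we have $\Tor_i^A(X,C)=0$ for $i\geq2$ and every $A$-module $X$. Hence
\[
\chi(G)=\ell(M_A\otimes_A C)-\ell\bigl(\Tor_1^A(M_A,C)\bigr).
\]
This expression is well defined since the homology of $G$ has finite length. Apply $-\lotimes_A C$ to
\[
0\to \H^0_\m(M_A)\to M_A\to N\to 0,
\]
which is legitimate because $N\cong M_A/\H^0_\m(M_A)$ by \cref{prop:structure}. The result is a long exact sequence of length at most six:
\[
0\to\Tor_2^A(N,C)\to\Tor_1^A(\H^0_\m(M_A),C)\to\Tor_1^A(M_A,C)\to\Tor_1^A(N,C)\to \H^0_\m(M_A)\otimes_A C\to M_A\otimes_A C\to N\otimes_A C\to0.
\]
The term $\Tor_2^A(M_A,C)$ is zero. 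Here $\Tor_2^A(N,C)$ also vanishes since $\pdim_A C=1$, but keeping it makes the statement match the truncated sum.

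\emph{Conclusion.} The terms involving $\H^0_\m(M_A)$ have finite length, and $M_A\otimes_A C$ and $\Tor_1^A(M_A,C)$ do too because they are the homology of $G$. It follows that the $N$-terms have finite length as well. Taking the alternating sum of lengths gives
\[
\chi(G)=\sum_{i=0}^2(-1)^i\ell\bigl(\Tor_i^A(N,C)\bigr)+\Bigl[\ell\bigl(\H^0_\m(M_A)\otimes_A C\bigr)-\ell\bigl(\Tor_1^A(\H^0_\m(M_A),C)\bigr)\Bigr].
\]
Equating this with the collapsed-side formula yields \cref{eq:truncated-tor-balance}.

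The main point needing care is the bookkeeping. First, one must confirm $\pdim_A C=\pdim_B C=1$, which comes from the Cohen--Macaulay hypotheses and the depth lemma. Second, one must check that every term in the six-term sequence has finite length. Beyond that the argument is routine.
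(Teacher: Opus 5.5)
Your overall route is the paper's: compute $\chi(G)$ on each wing, then split the $A$-side along $0\to\H^0_{\m}(M_A)\to M_A\to N\to0$. The gap is the lemma you use to justify the vanishing, namely $\pdim_A C=\pdim_B C=1$. Maximal Cohen--Macaulay does not mean free. The depth lemma does show that $\bar I$ and $\bar J$ are maximal Cohen--Macaulay over $B$ and $A$, but Auslander--Buchsbaum computes $\pdim_B C$ only once you already know it is finite. Since $\ann_R I=J$, the ideal $\bar I\subseteq B$ is faithful, so $\pdim_B C<\infty$ would force $\bar I$ to be free, hence principal. In \cref{thm:roberts-ma}, which satisfies every hypothesis of the proposition, $\bar I=W$ is a canonical ideal of $B$ with three minimal generators (\cref{prop:roberts-canonical}). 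There $\pdim_B C=\infty$. Likewise $\pdim_A C<\infty$ exactly when $\bar J$ is principal, and over a Gorenstein $R$ one has $\bar J\cong\omega_A$. So your claim that $\Tor_i^A(X,C)=0$ for every $A$-module $X$ and $i\geq2$ is false in general: for $X=k$ one gets $\Tor_2^A(k,C)\cong\Tor_1^A(k,\bar J)\neq0$ whenever $\bar J$ is not free. Your side remark that $\Tor_2^A(N,C)=0$ therefore has no basis. That term, isomorphic to $\Tor_1^A(N,\bar J)$, is kept in the statement because it need not vanish. The paper drops it only when $J$ is principal (\cref{prop:length-obstruction}) or when $\pdim_A N<\infty$ (\cref{cor:perfect-spine-balance}).

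The vanishing your argument actually needs concerns only $M_A$ and $M_B$, and it holds for a different reason. By \cref{eq:derived-interface}, $\Tor_i^A(M_A,C)\cong\Tor_i^R(M,C)\cong\Tor_i^B(M_B,C)$ for all $i$, and these vanish for $i\geq2$ by \cref{obs:restriction}. Equivalently, $M_A$ has finite projective dimension over the Cohen--Macaulay ring $A$ and $\bar J$ is maximal Cohen--Macaulay, so the Buchsbaum--Eisenbud argument of \cref{obs:restriction} gives $\Tor_i^A(M_A,\bar J)=0$ for $i\geq1$; the same holds over $B$. Use this justification in place of $\pdim C=1$, and do not assert $\Tor_2^A(N,C)=0$. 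Then your two expressions for $\chi(G)$, the seven-term exact sequence, and the alternating length count go through unchanged and coincide with the paper's proof.
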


\begin{proof}
By \cref{prop:structure}, there is an exact sequence
\[
 0\longrightarrow\H^0_{\m}(M_A)\longrightarrow M_A
 \longrightarrow N\longrightarrow0.
\]
Since $\Tor_i^A(M_A,C)=0$ for $i\geq2$ by
Proposition~\ref{obs:restriction} and \cref{eq:derived-interface}, the long
exact Tor sequence, with the terms involving $M_A$ identified through
\cref{eq:derived-interface}, contains
\[
\begin{aligned}
 0&\longrightarrow\Tor_2^A(N,C)
 \longrightarrow\Tor_1^A(\H^0_{\m}(M_A),C)
 \longrightarrow\Tor_1^B(M_B,C)\\
 &\longrightarrow\Tor_1^A(N,C)
 \longrightarrow\H^0_{\m}(M_A)\otimes_A C
 \longrightarrow M_B\otimes_B C\\
 &\longrightarrow N\otimes_A C\longrightarrow0.
\end{aligned}
\]
Every term has finite length, since the supports meet $C$ only at
the closed point. Taking alternating lengths proves the formula.
\end{proof}

\begin{corollary}\label{cor:perfect-spine-balance}
Under the hypotheses of \cref{prop:numerical-torsion-spine-balance},
if $\pdim_A N<\infty$, then
\[
 \depth_A N=1,\qquad \Tor_i^A(N,C)=0\quad(i\geq1).
\]
Consequently, the left-hand side of \cref{eq:truncated-tor-balance}
reduces to $\ell(N\otimes_A C)>0$.
\end{corollary}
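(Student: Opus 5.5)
The plan is to get the depth from Auslander--Buchsbaum over $A$, and then deduce the Tor vanishing from a depth count on the intersection ring $C$.

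\emph{Depth.} The spine $N$ is a nonzero $A$-module of positive depth; it has positive depth over $R$, and depth is unchanged under the surjection $R\onto A$. By \cref{obs:restriction}, $\pdim_A M_A=d$. The exact sequence $0\to\H^0_\m(M_A)\to M_A\to N\to0$ from \cref{prop:structure} suggests $\pdim_A N\le d$ may not give the sharp value, so I would argue through $C$ instead. Auslander--Buchsbaum over the Cohen--Macaulay ring $A$ of dimension $d$ gives $\pdim_A N=d-\depth_A N$. I expect to get $\depth_A N=1$ from the vanishing below together with \cref{prop:numerical-torsion-spine-balance}, but the more direct route is this.

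\emph{Tor vanishing via the unbalanced support.} By \cref{lem:positive-depth}, choose $g\in J$ that is $N$-regular. Since $gI=0$ in $R$, its image in $A$ lies in $(I+J)/I$, so $g$ kills $C$. The key point is that $C=R/(I+J)=A/\bar J$, where $\bar J=(I+J)/I$. By linkage, $\bar J\cong J/(I\cap J)=J\cong\omega$-type ideals; more concretely, \cref{eq:ring-patching} shows that $C$ is Cohen--Macaulay of dimension $d-1$, so $\bar J$ is a height-one ideal of $A$ with $A/\bar J$ Cohen--Macaulay.

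Next, $N$ has finite length modulo $\bar J$, since $\Supp N\cap\V(J)=\{\m\}$. So $N\lotimes_A C$ is a bounded complex with finite-length homology, and it is perfect over $C$ because $\pdim_A N<\infty$ and $C$ has finite flat dimension issues... This is the delicate point. Instead I would use the rigidity/depth formula of Auslander: if $\pdim_A N<\infty$, let $q=\sup\{i:\Tor_i^A(N,C)\ne0\}$. By the Auslander depth formula for the top nonvanishing Tor, $\depth\Tor_q^A(N,C)=\depth N+\depth_A C-\depth A-q$. Since $\Tor_q$ has finite length, its depth is $0$, so
\[
q=\depth_A N+(d-1)-d=\depth_A N-1.
\]
Since $q\ge0$, this gives $\depth_A N\ge1$, which we already know. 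To force $q=0$, bound $\depth_A N\le\dim N\otimes_A C+1$-type: $N\otimes_AC$ has finite length and $\dim N\ge1$, so I would show $\depth_A N\le1$. Intersection of $\Supp N$ with $\V(\bar J)$ is the closed point, so $\dim N\le\dim A-\operatorname{ht}\bar J+\dim(N/\bar JN)$. By the New Intersection Theorem applied over $A$ to $N$ (finite projective dimension) and $Z=\V(\bar J)$, we get $\pdim_A N\ge\dim C=d-1$, hence $\depth_A N\le1$. Therefore $\depth_A N=1$ and $q=0$, that is, $\Tor_i^A(N,C)=0$ for $i\ge1$.

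\emph{Conclusion.} The left side of \cref{eq:truncated-tor-balance} becomes $\ell(N\otimes_AC)$. This is positive by Nakayama, since $N\ne0$ and $\bar J\subseteq\m_A$.

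The main obstacle is the depth bound, specifically the need to invoke Auslander's depth formula for the top Tor, which requires rigidity/finite projective dimension. If that is unavailable, I would first try the New Intersection Theorem argument together with an Euler-characteristic count.
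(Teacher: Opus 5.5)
Your argument is correct in substance but follows a different route from the paper, and the depth formula you quote has a sign error. Auslander's theorem says $\depth_A C=\depth\Tor_q^A(N,C)+\pdim_A N-q$. After Auslander--Buchsbaum this reads $\depth\Tor_q^A(N,C)=\depth_A N+\depth_A C-\depth A+q$, with $+q$ (check it with both modules equal to $k$ over $k[[x]]$).

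The hypotheses of the theorem hold here: $\pdim_A N<\infty$, $\Tor_q^A(N,C)$ has finite length, and $q\geq0$ because $N\otimes_A C\neq0$. With the correct sign you get $q=1-\depth_A N$. Then $q\geq0$ and $\depth_A N\geq1$ force $q=0$ and $\depth_A N=1$ simultaneously, so the New Intersection Theorem is not needed. As written, your proof still reaches the right conclusion: NIT gives $\depth_A N=1$ independently, and after that either sign yields $q=0$.

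You should drop the inequality $\dim N\le\dim A-\operatorname{ht}\bar J+\dim(N/\bar JN)$. It is unused, and it is false in general: take $A=k[[x,y,z]]/(xy)$, $\bar J=(y,z)$, $N=A/(x)$.

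The paper follows the route you abandoned as ``delicate'', and there is no difficulty there. If $F$ is a minimal $A$-free resolution of $N$, then $F\otimes_A C$ is a bounded complex of finite free $C$-modules. The paper argues in three steps:
\begin{enumerate}
\item Auslander--Buchsbaum over $A$ bounds the length of $F\otimes_A C$ by $d-1=\depth C$.
\item The homology of $F\otimes_A C$ has finite length, so the acyclicity lemma gives $\Tor_i^A(N,C)=0$ for $i\geq1$.
\item $F\otimes_A C$ is then a minimal resolution of the nonzero finite-length module $N\otimes_A C$, and Auslander--Buchsbaum over $C$ forces its length to be exactly $d-1$. Hence $\depth_A N=1$.
\end{enumerate}

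So the paper obtains the lower bound $\pdim_A N\geq d-1$ as a consequence of the Tor vanishing, using only Auslander--Buchsbaum and the acyclicity lemma. Your corrected version compresses both steps into a single citation of Auslander's depth formula, whose proof rests on the same kind of depth counting. Your version as written instead imports NIT, a much deeper result than this statement needs.
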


\begin{proof}
Since $N$ has positive depth, Auslander--Buchsbaum gives
$\pdim_A N\leq d-1=\depth C$. As in the proof of
Proposition~\ref{obs:restriction}, we test acyclicity after tensoring a
minimal finite free resolution with the quotient ring. Here the terms
of the resolution tensored with $C$ have depth $d-1$, and its homology
has finite length. The acyclicity lemma therefore gives
$\Tor_i^A(N,C)=0$ for $i\geq1$.
The resulting minimal $C$-free resolution of the nonzero finite-length
module $N\otimes_A C$ has length $d-1$ by Auslander--Buchsbaum over
$C$. Hence $\pdim_A N=d-1$ and $\depth_A N=1$.
The positivity follows from Nakayama's lemma.
\end{proof}

The exact sequence
$0\to\H^0_{\m}(M_A)\to M_A\to N\to0$ and the finite projective
dimension of $M_A$ give
\[
 \pdim_A N<\infty
 \quad\Longleftrightarrow\quad
 \pdim_A\H^0_{\m}(M_A)<\infty.
\]
Thus, if $\depth_A N\geq2$, \cref{cor:perfect-spine-balance} implies
that both $N$ and $\H^0_{\m}(M_A)$ have infinite projective dimension
over $A$.

\begin{remark}\label{rem:torsion-length-defect}
Under the hypotheses of \cref{prop:numerical-torsion-spine-balance},
the torsion $T=\H^0_{\m}(M)$ satisfies
\begin{equation}\label{eq:torsion-length-defect}
 \ell(T)=\ell\bigl(\H^0_{\m}(M_A)\bigr)
       +\bigl[\ell(M_B)-\chi^B(M_B,C)\bigr].
\end{equation}
Indeed, $I$ is maximal Cohen--Macaulay over $B$ and Tor-independent
from $M_B$. Additivity applied to $0\to I\to B\to C\to0$ gives
$\ell(I\otimes_B M_B)=\ell(M_B)-\chi^B(M_B,C)$, and
\cref{prop:structure} gives the formula. No Gorenstein hypothesis is
needed here.
\end{remark}

\subsection{Gorenstein ambient rings}
\label{sec:gorenstein}

Retain the setup of \cref{sec:setup}, and suppose that $R$ is
Gorenstein of dimension $d$. The linked ideals are canonical modules
of the opposite wings \cite[Lecture~11]{24H}:
\[
 \omega_A\cong\Hom_R(A,R)=J,\qquad
 \omega_B\cong\Hom_R(B,R)=I.
\]
Since $I\cap J=0$, the quotient maps identify $J$ and $I$ with
canonical ideals of $A$ and $B$, respectively. Both quotients are $C$,
so the following lemma applies on either wing. In particular,
\[
 C\cong A/\omega_A\cong B/\omega_B,
 \qquad
 \mu_R(J)=\operatorname{type}(A),\qquad
 \mu_R(I)=\operatorname{type}(B).
\]

\begin{lemma}\label{lem:canonical-length}
Let $(S,\mathfrak n)$ be a Cohen--Macaulay local ring of dimension $d$
with a canonical ideal $\omega_S\subseteq S$. For every finite-length
$S$-module $L$, there is an isomorphism
\[
 \Ext_S^d(L,S)\cong(L\otimes_S\omega_S)^\vee,
\]
where $(-)^\vee$ denotes Matlis duality. Consequently,
\[
 \ell\bigl(\Ext_S^d(L,S)\bigr)=\ell(L\otimes_S\omega_S),
 \qquad
 \ll\bigl(\Ext_S^d(L,S)\bigr)=\ll(L\otimes_S\omega_S),
\]
and
\begin{equation}\label{eq:canonical-length}
\begin{aligned}
 &\ell(L\otimes_S S/\omega_S)
 -\ell\bigl(\Tor_1^S(L,S/\omega_S)\bigr)\\
 &\qquad=
 \ell(L)-\ell(L\otimes_S\omega_S)
 =\ell(L)-\ell\bigl(\Ext_S^d(L,S)\bigr).
\end{aligned}
\end{equation}
If $L$ is nonzero and has finite projective dimension, then
$\Ext_S^d(L,S)$ is also nonzero and has finite projective dimension.
\end{lemma}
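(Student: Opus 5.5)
The plan is to get the isomorphism from local duality, and then read off the length identities from the short exact sequence $0\to\omega_S\to S\to S/\omega_S\to0$. Throughout, $E=E_S(S/\mathfrak n)$ is the injective hull, and I use that for a finite-length module $L$ we have $L^\vee=\Hom_S(L,E)$, with $\ell(L^\vee)=\ell(L)$ and $\ll(L^\vee)=\ll(L)$.

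For the duality statement I would use $\Ext_S^d(L,S)\cong\Ext_S^d(L,\omega_S)\otimes$-type arguments in the following form. By local duality over the Cohen--Macaulay ring $S$ with canonical module $\omega_S$,
\[
\Ext_S^{d-i}(L,\omega_S)\cong\H^i_{\mathfrak n}(L)^\vee .
\]
Since $L$ has finite length, this gives $\Ext_S^d(L,\omega_S)\cong L^\vee$ and makes the other $\Ext$ groups vanish. This is not yet the statement, because it involves $\omega_S$ rather than $S$.

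The cleaner route is to use $\RHom_S(L,S)$ together with the Hom--tensor adjunction. Take a maximal $S$-regular sequence $\mathbf x$ in $\ann L$, which exists because $L$ has finite length and $S$ is Cohen--Macaulay. Set $\bar S=S/(\mathbf x)$, an Artinian ring. Rees' lemma gives
\[
\Ext_S^d(L,S)\cong\Hom_{\bar S}(L,\bar S).
\]
Since $\omega_{\bar S}=\omega_S/\mathbf x\omega_S=E_{\bar S}$, Matlis duality over the Artinian ring $\bar S$ gives $\bar S\cong\Hom_{\bar S}(\omega_{\bar S},\omega_{\bar S})$. Then
\[
\Hom_{\bar S}(L,\bar S)\cong\Hom_{\bar S}\bigl(L,\Hom(\omega_{\bar S},E)\bigr)\cong\Hom_{\bar S}(L\otimes_{\bar S}\omega_{\bar S},E)=(L\otimes_S\omega_S)^\vee,
\]
where the last step uses $L\otimes_S\omega_S=L\otimes_{\bar S}\omega_{\bar S}$ because $\mathbf x L=0$. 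I expect this identification, that is, making precise the isomorphism $\bar S\cong\Hom_{\bar S}(\omega_{\bar S},E)$, to be the main point needing care. The equalities of lengths and Loewy lengths then follow immediately.

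For \eqref{eq:canonical-length}, I would tensor $0\to\omega_S\to S\to S/\omega_S\to0$ with $L$. This gives
\[
0\to\Tor_1^S(L,S/\omega_S)\to L\otimes\omega_S\to L\to L\otimes S/\omega_S\to0,
\]
where the left zero comes from the vanishing of $\Tor_1^S(L,S)$. Taking alternating lengths gives the first equality, and the preceding step gives the second.

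For the last claim, suppose $\pdim L<\infty$. By Auslander--Buchsbaum, $\pdim L=d$, and $\Ext^i_S(L,S)=0$ for $i\neq d$ since $\operatorname{grade}L=d$. Dualizing a minimal free resolution $F$ of $L$ therefore gives a finite free resolution of $\Ext^d_S(L,S)$, so it has finite projective dimension. It is nonzero because $\Ext^d_S(L,S)$ is the cokernel of $F_{d-1}^*\to F_d^*$, which is a nonzero map's cokernel with entries in $\mathfrak n$, so it is nonzero by Nakayama. Alternatively, the top $\Ext$ is nonzero because its length equals $\ell(L\otimes\omega_S)>0$.
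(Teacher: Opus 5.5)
Your proposal is correct. For the main isomorphism it takes a different route from the paper; the rest matches.

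\textbf{How the paper argues.} The paper uses local cohomology. Since $L$ is $\mathfrak n$-torsion and $S$ is Cohen--Macaulay, $\Ext_S^d(L,S)\cong\Hom_S(L,\H^d_{\mathfrak n}(S))$. Local duality identifies $\H^d_{\mathfrak n}(S)$ with $\omega_S^\vee$, and Hom--tensor adjunction then gives $(L\otimes_S\omega_S)^\vee$.

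\textbf{How you argue.} You cut down by a maximal $S$-regular sequence $\mathbf x\subseteq\ann L$ and use Rees' lemma to reach $\Hom_{\bar S}(L,\bar S)$. Then:
\begin{enumerate}
\item $\omega_S/\mathbf x\omega_S$ is a canonical module of the Artinian ring $\bar S$, so it is $E_{\bar S}(k)$.
\item $\bar S\cong\Hom_{\bar S}(E_{\bar S},E_{\bar S})$, because Artinian rings are complete.
\item The same adjunction finishes the argument, using $\Hom_{\bar S}(X,E_{\bar S})=\Hom_S(X,E_S)$ for modules $X$ killed by $\mathbf x$, and $L\otimes_S\omega_S=L\otimes_{\bar S}\omega_{\bar S}$.
\end{enumerate}

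\textbf{What each route buys.} Your argument avoids local cohomology and local duality entirely. It needs only Rees' lemma and the behaviour of canonical modules modulo a regular sequence. It also makes visible why only the $d$-th Ext survives, namely $\operatorname{grade}L=d$. The paper's argument is a three-line chain once local duality is available. The two are closely linked: for Cohen--Macaulay $S$ one has $(0:_{\H^d_{\mathfrak n}(S)}\mathbf x)\cong S/(\mathbf x)$, which is how Rees' lemma sits inside the local-cohomology description.

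\textbf{Remaining steps.} These coincide with the paper: the four-term Tor sequence from $0\to\omega_S\to S\to S/\omega_S\to0$, Auslander--Buchsbaum, and dualizing a minimal finite free resolution.

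For nonvanishing, the paper uses exactly your alternative: $L\neq0$ forces $L\otimes_S\omega_S\neq0$, and this needs no projective-dimension hypothesis. Your Nakayama argument on $\coker(F_{d-1}^*\to F_d^*)$ also works. What matters there is that $F_d\neq0$ and that the map has entries in $\mathfrak n$, not that the map is nonzero. The abandoned opening about $\Ext_S^d(L,\omega_S)$ can simply be deleted.
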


\begin{proof}
Since $L$ has finite length, Grothendieck duality and
Cohen--Macaulayness give (see \cite[Lecture~11]{24H})
\[
\begin{aligned}
 \Ext_S^d(L,S)
 &\cong\Hom_S(L,\H_{\mathfrak n}^d(S))\\
 &\cong\Hom_S(L,\omega_S^\vee)\\
 &\cong\Hom_S(L\otimes_S\omega_S,E_S(k))\\
 &=(L\otimes_S\omega_S)^\vee.
\end{aligned}
\]
Here $E_S(k)$ is the injective hull of the residue field of $S$.
Matlis duality preserves length and annihilators, giving the asserted
equalities for length and Loewy length.

Tensoring $0\to\omega_S\to S\to S/\omega_S\to0$ with $L$ gives
\[
 0\longrightarrow\Tor_1^S(L,S/\omega_S)
 \longrightarrow L\otimes_S\omega_S
 \longrightarrow L
 \longrightarrow L\otimes_S S/\omega_S
 \longrightarrow0.
\]
Taking alternating lengths proves \cref{eq:canonical-length}.

If $L\neq0$, then $L\otimes_S\omega_S\neq0$, so
$\Ext_S^d(L,S)\neq0$. If, moreover, $\pdim_S L<\infty$, then
Auslander--Buchsbaum gives $\pdim_S L=d$. The dual of a finite free
resolution of $L$ has cohomology only in degree $d$, and therefore
gives a finite free resolution of $\Ext_S^d(L,S)$.
\end{proof}

The canonical-ideal descriptions of $C$ express the numerical balance
in \cref{prop:numerical-torsion-spine-balance} through length
asymmetries of the finite-length modules $M_B$ and $\H^0_{\m}(M_A)$.

\begin{theorem}\label{thm:torsion-spine-balance}
Let $M$ have finite projective dimension over $R$ and satisfy
\cref{eq:unbalanced}, and set $N=M/\H^0_{\m}(M)$. Then
\begin{equation}\label{eq:torsion-spine-balance}
\begin{aligned}
 &\ell(N\otimes_A C)
 -\ell\bigl(\Tor_1^A(N,C)\bigr)
 +\ell\bigl(\Tor_2^A(N,C)\bigr)\\
 &\qquad=
 \bigl[\ell(M_B)-\ell\bigl(\Ext_B^d(M_B,B)\bigr)\bigr]\\
 &\qquad\quad
 -\bigl[\ell\bigl(\H^0_{\m}(M_A)\bigr)
 -\ell\bigl(\Ext_A^d(\H^0_{\m}(M_A),A)\bigr)\bigr].
\end{aligned}
\end{equation}
\end{theorem}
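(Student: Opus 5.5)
The plan is to start from the balance of \cref{prop:numerical-torsion-spine-balance}, whose left-hand side already equals the left-hand side of \cref{eq:torsion-spine-balance}, and to rewrite each bracket on the right using \cref{lem:canonical-length}. The Gorenstein hypothesis on $R$ enters only through the canonical-ideal descriptions recorded at the start of \cref{sec:gorenstein}: $J$ is a canonical ideal of $A$ and $I$ is a canonical ideal of $B$, with
\[
 C\cong A/\omega_A\cong B/\omega_B .
\]

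First I check the hypotheses needed to apply \cref{lem:canonical-length} on each wing. Both $A$ and $B$ are Cohen--Macaulay of dimension $d$ by the setup of \cref{sec:setup}, and each has a canonical ideal, namely the image of $J$ in $A$ and the image of $I$ in $B$. The module $M_B$ has finite length by \cref{obs:restriction}, and $\H^0_{\m}(M_A)$ has finite length by definition. One small point to confirm is that the identification $C\cong A/\omega_A$ is the identification of $A$-algebras coming from $A/JA=R/(I+J)$. This ensures that $\Tor^A_i(-,C)$ in \cref{eq:truncated-tor-balance} really is $\Tor^A_i(-,A/\omega_A)$, and likewise on $B$.

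Next I apply \cref{eq:canonical-length} with $S=B$ and $L=M_B$. This gives
\[
 \ell(M_B\otimes_B C)-\ell\bigl(\Tor_1^B(M_B,C)\bigr)=\ell(M_B)-\ell\bigl(\Ext_B^d(M_B,B)\bigr).
\]
I then apply it with $S=A$ and $L=\H^0_{\m}(M_A)$, which similarly gives
\[
 \ell\bigl(\H^0_{\m}(M_A)\otimes_A C\bigr)-\ell\bigl(\Tor_1^A(\H^0_{\m}(M_A),C)\bigr)=\ell\bigl(\H^0_{\m}(M_A)\bigr)-\ell\bigl(\Ext_A^d(\H^0_{\m}(M_A),A)\bigr).
\]
Substituting both identities into \cref{eq:truncated-tor-balance} yields \cref{eq:torsion-spine-balance} directly. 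No finite projective dimension is needed for $N$ or for $\H^0_{\m}(M_A)$, since \cref{lem:canonical-length} uses only finite length for the length identity.

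I expect no real obstacle. The only delicate step is the one noted above: making sure that the canonical ideals of $A$ and $B$ obtained by linkage in a Gorenstein ring cut out exactly $C$ on each side, so that the Tor terms match. This follows from $I\cap J=0$ and $\omega_A\cong\Hom_R(A,R)=\ann_R I=J$, with $J$ mapping isomorphically onto $JA$.
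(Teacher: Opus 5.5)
Your proposal is correct and matches the paper's proof: the paper likewise applies \cref{lem:canonical-length} over $B$ to $M_B$ and over $A$ to $\H^0_{\m}(M_A)$, using that $C$ is the quotient by a canonical ideal on each wing, and substitutes the results into \cref{prop:numerical-torsion-spine-balance}. You also check explicitly that the image of $J$ in $A$ (resp.\ of $I$ in $B$) is a canonical ideal whose quotient is exactly $C=R/(I+J)$, a point the paper leaves implicit.
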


\begin{proof}[Proof of \cref{thm:torsion-spine-balance}]
Apply \cref{lem:canonical-length} over $B$ to $M_B$ and over $A$
to $\H^0_{\m}(M_A)$. Since $C$ is the quotient by a canonical ideal on
each wing, these identify the two differences in
\cref{eq:truncated-tor-balance} with the corresponding length
asymmetries. The result follows from
\cref{prop:numerical-torsion-spine-balance}.
\end{proof}

No finite-projective-dimension hypothesis on $N$, or equivalently on
$\H^0_{\m}(M_A)$, is required.

\begin{corollary}\label{prop:length-obstruction}
Under the hypotheses of \cref{thm:torsion-spine-balance}, suppose
that $A$ is Gorenstein, and write $J=(g)$. Then
\begin{equation}\label{eq:length-asymmetry}
 \ell(N/gN)
 =\bigl[\ell(M_B)-\ell\bigl(\Ext_B^d(M_B,B)\bigr)\bigr]>0.
\end{equation}
\end{corollary}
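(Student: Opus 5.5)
The plan is to specialize \cref{thm:torsion-spine-balance} to the case where $A$ is Gorenstein. The supporting-wing bracket should then vanish, and the left side should collapse to $\ell(N/gN)$.

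First, since $R$ is Gorenstein, the canonical-ideal description gives $\omega_A\cong J$ with $\mu_R(J)=\operatorname{type}(A)=1$. So $J=(g)$ is principal. Moreover $\omega_A\cong A$ (since $A$ is Gorenstein), so $g$ is a nonzerodivisor on $A$, and $C=A/gA$. Note also $(0:_R g)=\ann_R J=I$ by linkage.

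\textbf{Supporting-wing bracket.} Apply \cref{lem:canonical-length} over $A$ with $\omega_A=gA\cong A$ and $L=\H^0_{\m}(M_A)$. Then $L\otimes_A\omega_A\cong L$, so
\[
 \ell\bigl(\Ext_A^d(L,A)\bigr)=\ell(L).
\]
Hence the second bracket in \cref{eq:torsion-spine-balance} is zero.

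\textbf{Left-hand side.} Since $C=A/gA$ with $g$ regular on $A$, the complex $0\to A\xrightarrow{g}A\to0$ is an $A$-free resolution of $C$. This gives
\[
 \Tor_1^A(N,C)=(0:_N g),\qquad \Tor_2^A(N,C)=0.
\]
By \cref{lem:positive-depth}, $g\in\bar J$ acts on the positive-depth spine $N$ with $\Supp N\cap\V(J)=\{\m\}$. The associated primes of $N$ avoid $\V(J)$ (exactly as in that proof), so the whole ideal $J=(g)$ contains a nonzerodivisor on $N$. Since $J$ is principal, $g$ itself is $N$-regular, and $(0:_N g)=0$. The left side is therefore $\ell(N\otimes_A C)=\ell(N/gN)$, which gives the equality \cref{eq:length-asymmetry}.

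\textbf{Positivity.} The module $N$ is nonzero and finitely generated, and $g\in\m$, so Nakayama's lemma gives $N/gN\neq0$. Thus $\ell(N/gN)>0$.

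The only potentially delicate step is confirming that $g$ is a nonzerodivisor on $N$. It relies on the prime-avoidance argument of \cref{lem:positive-depth}: no associated prime of $N$ contains $J=(g)$, hence none contains $g$. With a principal $J$ this is immediate.
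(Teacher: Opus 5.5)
Your proposal is correct and follows the paper's proof. You kill the $\Tor$ terms on the left of \cref{eq:torsion-spine-balance} using that $g$ is regular on both $A$ and $N$, use duality over the Gorenstein wing $A$ (via \cref{lem:canonical-length} with $\omega_A\cong A$) to make the supporting-wing bracket vanish, and get positivity from Nakayama's lemma; your extra justifications ($\mu_R(J)=\operatorname{type}(A)=1$, and $(0:_R g)=I$ giving $C=A/gA$ with $g$ regular on $A$) just fill in details the paper leaves implicit.
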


\begin{proof}
Since $g$ is a nonzerodivisor on both $A$ and $N$, the higher Tor
terms on the left-hand side of \cref{eq:torsion-spine-balance} vanish.
Local duality over $A$ makes the supporting-wing length difference
zero, so \cref{thm:torsion-spine-balance} gives the formula; positivity
follows from Nakayama's lemma.
\end{proof}

If $B$ were also Gorenstein, local duality over $B$ would make the
length difference in \cref{eq:length-asymmetry} zero, contradicting
its positivity. This gives another proof of \cref{cor:gorenstein}.

\subsection{A Gorenstein supporting wing}
\label{sec:gorenstein-example}

When $R$ and $A$ are Gorenstein, \cref{prop:length-obstruction}
requires a positive length asymmetry on the collapsed wing. In
particular, $B$ must be non-Gorenstein. Here $\mu_R(J)=1$ and
$\mu_R(I)=\operatorname{type}(B)\geq2$. We realize this asymmetry
using Roberts' construction \cite[pp.~36--38]{Ro2}.

\begin{theorem}\label{thm:roberts-ma}
For every field $k$ of characteristic different from two, there is a
three-dimensional complete reduced Gorenstein local ring $R$ with residue
field $k$ and exactly two irreducible components. Their coordinate rings
$A$ and $B$ are linked and Cohen--Macaulay, with $A$ Gorenstein and $B$
of type three. The ring $R$ admits a module $M$ of finite projective
dimension with
\[
 \dim(M\otimes_R A)=1,\qquad \dim(M\otimes_R B)=0.
\]
\end{theorem}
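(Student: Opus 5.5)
The construction has three stages: a non-Gorenstein collapsed wing from Roberts, a Gorenstein fiber product glued along a canonical ideal, and a module produced by Ma's patching method. Corollary~\ref{prop:length-obstruction} dictates the requirements. We need a three-dimensional Cohen--Macaulay domain $B$ of type three, and a finite-length $B$-module $M_B$ of finite projective dimension with $\ell(M_B)>\ell(\Ext_B^3(M_B,B))$. We also need a Gorenstein domain $A$ such that $C=B/\omega_B\cong A/\omega_A$ with $\omega_A=(g)$ principal.

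\textbf{Step 1: the collapsed wing.} Take $B$ and the finite-length module $L$ of finite projective dimension from Roberts' construction \cite[pp.~36--38]{Ro2}. Roberts' module exhibits a failure of vanishing, so $\chi^B(L,B/\omega_B)\neq0$. By \cref{eq:canonical-length}, this Euler characteristic equals $\ell(L)-\ell(\Ext^3_B(L,B))$. If the sign is negative, we replace $L$ by $\Ext^3_B(L,B)$, exactly as in \cref{lem:dhm-positive-interface}; this is legitimate because \cref{lem:canonical-length} shows the dual is nonzero and has finite projective dimension. Set $M_B$ to be the resulting module, so $\chi^B(M_B,C)=c>0$. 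We then compute $M_B\lotimes_B C$ explicitly as a complex of $C$-modules with finite-length homology. I expect it to be quasi-isomorphic to $k^a\oplus\shift k^b$ with $a-b=c$, which we will check by a minimal Betti number argument as in \cite[Remark~7]{Ma}. The hypothesis $\operatorname{char}k\neq2$ is presumably needed here, or in the reducedness of the glued ring below.

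\textbf{Step 2: the Gorenstein gluing.} We choose a canonical ideal $\omega_B\subset B$ with $C=B/\omega_B$ Cohen--Macaulay of dimension two, which should be a regular ring such as $k[[y,w]]$ or close to it. We then let $A=C[[u]]$, or more generally a hypersurface with $A/(g)\cong C$, which is Gorenstein with $\omega_A=(g)$. The fiber product $R=A\times_C B$ is a complete local ring that is reduced with exactly two components, since $A$ and $B$ are domains. It is Cohen--Macaulay of dimension three by the depth lemma applied to \cref{eq:ring-patching}. It is Gorenstein by the standard gluing criterion: gluing two maximal Cohen--Macaulay rings along canonical ideals, with $I\cong\omega_B$ and $J\cong\omega_A$, yields a Gorenstein ring (cf.\ \cite[Lecture~11]{24H} and \cite{PS2}). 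Linkage then follows from the annihilator computation in the fiber product. If $A$ has to be a specific hypersurface to realize the gluing, we present $R$ explicitly as $S/(I\cap J)$ and verify Gorensteinness by computing the socle of an Artinian reduction.

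\textbf{Step 3: the module.} On $A$ we take $M_A=N\oplus k^{b'}$ with $N=A/\mathfrak q$, where $\mathfrak q$ is a one-dimensional prime containing $g$-transverse data, so that $N/gN$ has length $c$. Following \cref{susec:variation}, we arrange that $N\lotimes_A C\simeq k^{c}$ and $k\lotimes_A C\simeq k\oplus\shift k$. Choosing $b'=b$ and adjusting the free part gives $M_A\lotimes_A C\simeq M_B\lotimes_B C$. Since $A$ is regular, $M_A$ is perfect. Ma's patching \cite{Ma} then glues the perfect complexes over $A$ and $B$ along this isomorphism into a perfect $R$-complex. Its homology sequence shows that the homology is concentrated in degree zero, giving $M$ with $\pdim_R M<\infty$, $M\otimes_R A=M_A$ of dimension one, and $M\otimes_R B=M_B$ of finite length.

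\textbf{Main obstacle.} The hard step is Step 1 combined with the requirement in Step 2. Roberts' quotient by the canonical ideal must have a derived restriction of the simple shape $k^a\oplus\shift k^b$, and $C$ must be regular or nice enough to be a principal quotient of a Gorenstein (ideally regular) $A$. I would first try to read $C$ directly off Roberts' presentation and compute the Tor groups by Macaulay-style explicit resolutions.
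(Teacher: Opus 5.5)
Your three-stage architecture is the paper's: Roberts' double cover $B$ as the collapsed wing, $A=C[[u]]$ glued along a canonical ideal, then Ma's patching. However, Step 3 fails as written. It rests on your expectation that $C=B/\omega_B$ is regular and that $M_B\lotimes_B C$ is a sum of shifted copies of $k$.

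For Roberts' ring $B=H\oplus Qt$, no canonical quotient is regular. The space $\m_B/\m_B^2$ is six-dimensional: it has the four generators of $\m_H$ plus $xt,yt$, because $(xt)^2=xz$, $(xt)(yt)=yz$ and $(yt)^2=yw$ all lie in $\m_H^2$. A canonical ideal has $\operatorname{type}(B)=3$ minimal generators, so $B/\omega_B$ has embedding dimension at least $3>2$. Concretely, $W=(x,z,xt)B$ gives $C\cong k[[y,w,b]]/(b^2-yw)$.

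Consequently $A=C[[u]]$ is a singular hypersurface, and your claim ``since $A$ is regular'' is false. The residue field $k$ has infinite projective dimension over $A$, so $M_A=N\oplus k^{b'}$ is not perfect and cannot be patched. The derived restriction also does not have the shape you predict: it is $E^6\oplus\shift E^5$ with $E=C/(y,w)C\cong k[b]/(b^2)$. The asymmetry is $c=31-29=2$, which already rules out $N\lotimes_A C\simeq k^c$ for a cyclic $N=A/\mathfrak q$.

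The missing idea is to work with $E$ in place of $k$, and to compute the interface by base change from $H$ rather than by a Betti-number argument over the singular ring $C$.
\begin{itemize}
\item Take $M_H=\Ext^3_H(L^\vee,H)$, for which $M_H\lotimes_H C_0\simeq k^6\oplus\shift k^5$ by \cref{lem:dhm-positive-interface}, and set $M_B=M_H\otimes_H B$.
\item Since $B$ is maximal Cohen--Macaulay over $H$, it is Tor-independent from $M_H$. Since $C$ is free of rank two over $C_0=k[[y,w]]$, one gets $M_B\lotimes_B C\simeq M_H\lotimes_H C\simeq(M_H\lotimes_H C_0)\lotimes_{C_0}C\simeq E^6\oplus\shift E^5$.
\item On the supporting wing, use $N=A/(y,w)A=E[[u]]$ and $(A/(u,y,w)A)^5$. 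These are perfect because $y,w,u$ is an $A$-regular sequence, and their derived restrictions to $C$ are $E$ and $(E\oplus\shift E)^5$, which match.
\end{itemize}

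You must also prove, rather than assume, two further parts of the statement. First, the type-three claim and the canonical ideal: the paper identifies $\omega_B\cong\Hom_H(B,H)\cong Q^{-1}\oplus Ht$ via the coefficient-of-$t$ pairing, then multiplies by $x$ to get $W$ with $W/\m_B W\cong k^3$. Second, the Gorenstein property of $R$. The results you cite go the other way, from a Gorenstein $R$ to canonical linked ideals. For $A=C[[u]]$ the property follows instead from $R\cong B[[u]]/uW[[u]]$, a quotient of $B[[u]]$ by a canonical ideal, as in \cref{lem:canonical-ideal-gluing}.
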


The geometry of the construction consists of a double cover followed
by gluing along a canonical ideal. For a Weil divisor $D$ on the
spectrum of a Noetherian normal domain $H$, we use the convention
\[
 H(D)=\{0\}\cup
 \{h\in\operatorname{Frac}(H)^\times:
             \operatorname{div}(h)+D\geq0\}.
\]
Thus $H(D)$ is a rank-one reflexive fractional $H$-module.
Suppose that $2$ is invertible in $H$. For $Q=H(D)$ and a
nonzero $f\in H(-2D)$, multiplication of fractional elements gives
\[
 Q\otimes_H Q\longrightarrow H(2D)\xrightarrow{\ f\ }H.
\]
This defines the algebra $B=H\oplus Qt$ inside
$\operatorname{Frac}(H)[t]/(t^2-f)$. On the locus where $Q$ is
invertible, this is the usual double cover obtained by taking the
square root of a section of a line bundle; see
\cite[Section~3.5]{EV}. Globally, the same multiplication defines a
possibly nonflat reflexive extension across the non-Cartier locus.
When $H$ is Gorenstein and $B$ is Cohen--Macaulay, finite-map duality
\cite[Lecture~11]{24H} gives
$\omega_B\cong\Hom_H(B,H)$, whose underlying $H$-module is
$H\oplus H(-D)$.

We apply this picture to Roberts' construction \cite[pp.~36--38]{Ro2}
and combine it with the patching method in \cite{Ma} to prove
\cref{thm:roberts-ma}. Let $k$ be a field of characteristic different
from two, and set
\[
 H=\frac{k[[x,y,z,w]]}{(xw-yz)},\qquad
 P=(x,z)H,\qquad Q=(x,y)H,\qquad C_0=H/P.
\]
The ring $H$ is a normal Gorenstein domain, and $Q$ is a rank-one
reflexive ideal. With this convention, $Q=H(-V(Q))$, so the example has
$D=-V(Q)$ and $f=z/x\in H(2V(Q))$.
Write $K=\operatorname{Frac}(H)$ and choose $t$ with
$t^2=z/x$. The valuation of $z/x$ at $Q$ is $-1$, so $t\notin K$.
The algebra
\[
 B=H\oplus Qt
\]
is a finite local domain over $H$, with
\[
 (xt)^2=xz,\qquad (xt)(yt)=yz,\qquad (yt)^2=yw.
\]
Indeed, modulo $\m_HB$, the images of $xt$ and $yt$ generate a
square-zero ideal with quotient $k$. The exact sequence
$0\to Q\to H\to H/Q\to0$ gives $\depth_H Q=3$, so $B$ is
Cohen--Macaulay of dimension three.

\begin{proposition}\label{prop:roberts-canonical}
For $a=xt$, the ideal
\[
 W=(x,z,a)B=P\oplus Ha
\]
is a canonical ideal of $B$, and $\operatorname{type}(B)=3$.
Writing $b=yt$, one has
\[
 C=B/W\cong\frac{k[[y,w,b]]}{(b^2-yw)}.
\]
In particular, $C$ is Gorenstein and free of rank two over $C_0$.
\end{proposition}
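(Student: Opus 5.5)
The plan is to use the $H$-module decomposition $B=H\oplus Qt$ and to compute $W$, $B/W$ and $\Hom_H(B,H)$ directly.

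\emph{Step 1: $W$ is an ideal with the stated decomposition.} First observe that $P+Qx=P$, since $Q\,x\subseteq(x)\subseteq P$. Next compute $(Qt)\cdot a$. For $q\in Q$ we have $qt\cdot xt=qx\cdot(z/x)=qz$, which lies in $P$. Also $(x,z)\cdot Qt=xQt+zQt$. Here $xQt\subseteq Ha$ is clear. For $zQt$, write $zy\,t=x\cdot(zy/x)\,t$. Alternatively use $zt=x t\cdot(z/x)$, which requires $z/x\cdot Q\subseteq H$; that is not literally true. So the plan is to check directly that $z\cdot Qt\subseteq Ha$. Since $z\cdot Q=(xz,yz)=x(z,w)\cdot$? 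Indeed $yz=xw$, so $zQ=x(z,w)$ and $zQt=(z,w)xt\subseteq Ha$. Combining these, $W=(x,z,a)B=P\oplus Ha$, and the quotient is
\[
 B/W=H/P\oplus Qt/Ha .
\]
The second summand is $Qt/Hxt\cong Q/xH$, which is generated by $b=yt$. Since $Q/(x)\cong H/((x):y)=H/(x,z)=C_0$, the algebra $B/W$ is $C_0\oplus C_0b$ with $b^2=yw$. This identifies $C=B/W\cong k[[y,w,b]]/(b^2-yw)$, which is a Gorenstein hypersurface and free of rank two over $C_0$.

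\emph{Step 2: $W$ is a canonical ideal.} By the finite-map duality recalled above, $\omega_B\cong\Hom_H(B,H)$. The task is to exhibit an injective $B$-linear map $\Hom_H(B,H)\to B$ with image $W$.

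To do this, use the trace-type pairing. Any $\phi\in\Hom_H(B,H)$ corresponds to the pair $(\phi|_H,\phi|_{Qt})$, giving
\[
 \Hom_H(B,H)\cong H\oplus\Hom_H(Q,H)=H\oplus H(-D)=H\oplus Q^{-1},
\]
where $Q^{-1}=(x,z)x^{-1}\cdots$. Concretely, choose the generator
\[
 \tau\colon h+qt\mapsto q\cdot(\text{suitable element}).
\]
Check that $\omega_B$ is generated as a $B$-module by an element $\psi$ with $\psi(1)=0$ and $\psi(qt)=q\cdot c$. Then compute $\operatorname{ann}$ and the conductor-type identification, $\omega_B\cong (B:_K\ldots)$, and match this with $x^{-1}W$ or a similar fractional ideal. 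Since $W$ has $H$-module structure $P\oplus Ha\cong H(V(P))$-twist $\oplus H$, the key point is that $P$ and $Q^{-1}$ represent the same divisor class. Indeed $P=(x,z)$ and $Q=(x,y)$ satisfy $P\cdot Q\subseteq xH$ with $PQ$ reflexive-hull $xH$, so $P\cong Q^{-1}$, matching $H\oplus H(-D)$ up to the swap induced by multiplication by $t$.

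I expect this matching of $B$-module structures, and not just of $H$-modules, to be the main obstacle. I would try to handle it by checking that the isomorphism $W\to\Hom_H(B,H)$, $w\mapsto(\beta\mapsto\text{coefficient of }t\text{ in }w\beta/x)$, is well defined and bijective.

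\emph{Step 3: the type.} Since $W$ is a canonical ideal, $\operatorname{type}(B)=\mu_B(\omega_B)=\mu_B(W)$. It therefore suffices to show that $x,z,a$ are minimal generators. Modulo $\m_BW$, with $\m_B=(x,y,z,w,xt,yt)$, each of these elements has degree one. Using the $H$-grading, the relations $a\cdot xt=xz$ and similar ones lie in degree two, so no generator is redundant. This gives $\mu_B(W)=3$.
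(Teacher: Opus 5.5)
Your Steps 1 and 2 follow the paper's route. This covers the decomposition $W=P\oplus Ha$, the description $C=C_0\oplus C_0b$ via $Q/xH\cong H/((x):_Hy)=C_0$, and, in the form you finally settle on, the identification of $\Hom_H(B,H)$ with $x^{-1}W$ through the functional $\lambda$ extracting the coefficient of $t$. Step 1 is fine.

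Step 2, however, is only a plan, and part of it is wrong. First, $\omega_B$ cannot be generated by a single element $\psi$: that would make $B$ Gorenstein, contradicting the type $3$ you derive in Step 3. Second, the abstract $H$-isomorphism $P\cong Q^{-1}$ says nothing about $B$-module structures. Conversely, the point you flag as the main obstacle is not one: $B$-linearity of $w\mapsto\lambda(w\,\cdot\,/x)$ is automatic from associativity and commutativity.

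What actually has to be verified is the following.
\begin{enumerate}
\item \emph{Well-definedness.} For $w=p+ha$ and $\beta=r+qt$, the $t$-coefficient of $w\beta/x$ is $pq/x+hr$. This lies in $H$ because $PQ=x\mathfrak{m}_H$.
\item \emph{Injectivity.} This follows from nondegeneracy of $(v,c)\mapsto\lambda(vc)$ on $K(t)=B\otimes_HK$.
\item \emph{Surjectivity.} Any $\phi\in\Hom_H(B,H)$ extends $K$-linearly to $K(t)$, so $\phi=\lambda(c\,\cdot\,)$ for a unique $c=v+ht$. Evaluating on $H$ gives $h\in H$, and evaluating on $Qt$ gives $v\in Q^{-1}$. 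Since $(x):_Hy=P$, one has $xQ^{-1}=P$, so $xc\in W$.
\end{enumerate}
This is exactly the paper's argument, and you already have both ingredients, $PQ\subseteq xH$ and $(x):_Hy=P$.

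For the type you genuinely diverge from the paper. The paper computes $\mathfrak{n}W=\mathfrak{m}_HP\oplus\mathfrak{m}_Ha$ directly from $PQ=x\mathfrak{m}_H$ and $zQ\subseteq\mathfrak{m}_HP$, so $W/\mathfrak{n}W\cong P/\mathfrak{m}_HP\oplus k\cong k^2\oplus k$.

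Your grading argument also works, but it must be set up properly. Give $H$ its standard grading and $t$ degree $0$, so that $xt$ and $yt$ have degree one. The product $qt\cdot q't=qq'z/x$ is then homogeneous, so $B$ is the completion of a standard graded algebra in which $W$ is generated in degree one. Minimality follows because $\mathfrak{m}_BW$ lives in degrees $\geq2$, while $x,z,xt$ are $k$-linearly independent in $B_1=H_1\oplus Q_1t$. The observation that relations such as $a\cdot xt=xz$ have degree two is not what proves this.

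Your route is quick once the grading is justified. The paper's avoids any grading and exhibits the minimal generators structurally as $P/\mathfrak{m}_HP\oplus k$.
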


\begin{proof}
Let $\lambda:K(t)\to K$ extract the coefficient of $t$. The
nondegenerate pairing $(v,c)\mapsto\lambda(vc)$ identifies $K(t)$
with its $K$-dual. Since
\[
 \lambda\bigl((v+ht)(r+qt)\bigr)=vq+hr,
\]
the elements pairing integrally with $B$ form $Q^{-1}\oplus Ht$,
where $Q^{-1}=\Hom_H(Q,H)$ is viewed as a fractional ideal.
Associativity makes this identification $B$-linear. Finite-map duality
therefore gives
\[
 \omega_B\cong\Hom_H(B,H)\cong Q^{-1}\oplus Ht.
\]
The identity $(x):_H y=P$ gives $xQ^{-1}=P$. Multiplication by $x$
thus realizes the canonical module as
$W=P\oplus Ha=(x,z,a)B$.

Set $\mathfrak n=\mathfrak m_B=\mathfrak m_H\oplus Qt$.
The identities $PQ=x\mathfrak m_H$ and
$zQ\subseteq\mathfrak m_HP$ give
\[
 \mathfrak nW=\mathfrak m_HP\oplus\mathfrak m_Ha,
 \qquad W/\mathfrak nW\cong k^2\oplus k.
\]
Hence $\operatorname{type}(B)=3$.
Finally, multiplication by $y$ induces $C_0\cong Q/xH$, so
$C\cong C_0\oplus C_0b$, with $b^2=yw$. This gives the displayed
presentation and the free rank-two structure.
\end{proof}

We use $B$ as the collapsed wing, with common quotient $C=B/W$.
As in our variation on \cite{Ma} in \cref{susec:variation}, we take
\[
 A=C[[u]],\qquad R=A\times_C B.
\]
The supporting wing $A$ is Gorenstein because $C$ is a hypersurface.
Since the gluing ideal $W$ is canonical, the following lemma shows
that $R$ is Gorenstein as well.

\begin{lemma}\label{lem:canonical-ideal-gluing}
Let $S$ be a complete Cohen--Macaulay local ring of dimension $d\geq1$,
and let $K\subsetneq S$ be a canonical ideal.
Then $S/K$ is Gorenstein of dimension $d-1$, and
\[
 (S/K)[[u]]\times_{S/K}S\cong S[[u]]/uK[[u]]
\]
is Gorenstein of dimension $d$.
\end{lemma}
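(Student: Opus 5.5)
The plan has three steps: show that $S/K$ is Gorenstein of dimension $d-1$, identify the fiber product with $S[[u]]/uK[[u]]$, and then prove that this ring is Gorenstein by a nonzerodivisor argument.

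\emph{Step 1: $S/K$ is Gorenstein of dimension $d-1$.} This is the classical fact about canonical ideals. Since $S$ has a canonical module, it is generically Gorenstein, so a canonical ideal $K\subsetneq S$ has height one. From $0\to K\to S\to S/K\to 0$ and $\depth K=d$, the depth lemma gives $\depth S/K\geq d-1$. Hence $S/K$ is Cohen--Macaulay of dimension $d-1$. Applying $\Hom_S(-,K)$ and using $\Hom_S(K,K)=S$ and $\Ext^1_S(K,K)=0$ gives
\[
 \omega_{S/K}\cong\Ext^1_S(S/K,K)\cong\coker\bigl(\Hom_S(S,K)\to\Hom_S(K,K)\bigr)=S/K.
\]
So $S/K$ is Gorenstein.

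\emph{Step 2: the fiber product.} Write $T=S[[u]]/uK[[u]]$. There are surjections $T\to S$ (set $u=0$) and $T\to (S/K)[[u]]$ (reduce modulo $K$). They agree on $S/K$, so they induce a map $T\to (S/K)[[u]]\times_{S/K}S$. To compare the two sides, decompose elements by powers of $u$. An element of $T$ is $s_0+\sum_{i\geq1}\bar s_iu^i$ with $\bar s_i\in S/K$, which is exactly the data of a pair in the fiber product. Hence the map is an isomorphism. This step is routine.

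\emph{Step 3: $T$ is Gorenstein of dimension $d$.} I would find a nonzerodivisor $\theta$ on $T$ with $T/\theta T\cong S/K\otimes(\text{something Gorenstein})$, or more directly with $T/\theta T$ Gorenstein. The natural candidate is $\theta=u-x$ for some $x\in K$ that is $S$-regular; such an $x$ exists because $K$ has height one and contains a nonzerodivisor. Then
\[
 T/(u-x)\cong S/xK.
\]
The key observation is that $S/xK$ is Gorenstein of dimension $d-1$. To see this, note $xS/xK\cong S/K$, so there is an extension $0\to S/K\to S/xK\to S/xS\to0$, and $S/xK$ is Cohen--Macaulay of dimension $d-1$ by the depth lemma. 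Its canonical module is $\Ext^1_S(S/xK,K)$. Using $0\to xK\to S\to S/xK\to0$ and $\Hom_S(xK,K)=x^{-1}S$, this becomes
\[
 \Ext^1_S(S/xK,K)\cong x^{-1}S/S\cong S/xS.
\]
The last module is not cyclic over $S/xK$ in an obvious way, so this step needs care.

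The safer alternative is to compute $\omega_T=\Hom_{S[[u]]}(T,K[[u]])$ directly, since $K[[u]]$ is the canonical ideal of $S[[u]]$. Writing $T=S[[u]]/uK[[u]]$ gives
\[
 \omega_T\cong\Ext^1_{S[[u]]}(T,K[[u]])=\coker\bigl(K[[u]]\to\Hom(uK[[u]],K[[u]])\bigr)=u^{-1}S[[u]]/K[[u]].
\]
Multiplying by $u$ identifies this with $S[[u]]/uK[[u]]=T$. Before invoking this, I must check that $T$ is Cohen--Macaulay of dimension $d$. This follows from the depth lemma on $0\to R\to A\oplus B\to C\to0$ for the fiber product, as in \cref{sec:setup}. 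Note that $u$ is a nonzerodivisor on $uK[[u]]$. Then $T$ is Cohen--Macaulay with $\omega_T\cong T$, hence Gorenstein.

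I expect the main obstacle to be verifying that the cokernel computation gives a \emph{free} cyclic module. This requires $\Hom(uK[[u]],K[[u]])=u^{-1}S[[u]]$, which follows from $\Hom_S(K,K)=S$ extended to $S[[u]]$, together with the vanishing of $\Ext^1_{S[[u]]}(S[[u]],K[[u]])$.
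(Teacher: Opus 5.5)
Your final argument (the ``safer alternative'') is correct and is essentially the paper's proof. The paper packages it more economically: it notes that $uK[[u]]\cong K[[u]]$ is itself a canonical ideal of $S[[u]]$, so your Step~1 applied to $(S[[u]],uK[[u]])$ gives both the Cohen--Macaulayness of $T$ and $\omega_T\cong T$ at once, without the separate fiber-product depth argument.

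Incidentally, your abandoned $u-x$ route fails only because of a slip in the $\Ext$ computation. The image of $K$ in $\Hom_S(xK,K)=x^{-1}S$ is $K$, not $S$, so $\Ext^1_S(S/xK,K)\cong x^{-1}S/K\cong S/xK$. Indeed $S/xK$ is Gorenstein by Step~1, since $xK\cong K$ is a canonical ideal. Because $u-x$ is regular on $T\subseteq (S/K)[[u]]\times S$, that route also works.
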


\begin{proof}
The canonical module is faithful, so $K$ lies in no associated prime
of $S$ and hence contains a nonzerodivisor by prime avoidance.
The depth lemma applied to $0\to K\to S\to S/K\to0$ shows that
$S/K$ is Cohen--Macaulay of dimension $d-1$. Canonical duality gives
$\Hom_S(K,K)=S$. Applying $\Hom_S(-,K)$ to the same sequence yields
\[
 \omega_{S/K}\cong\Ext_S^1(S/K,K)
 \cong\coker\bigl(K\longrightarrow\Hom_S(K,K)\bigr)
 \cong S/K.
\]
The map $S[[u]]\to(S/K)[[u]]\times_{S/K}S$, given by reduction
modulo $K$ and evaluation at $u=0$, is surjective with kernel
$uK[[u]]$. Since $K[[u]]$ is a canonical module of $S[[u]]$ and
multiplication by $u$ identifies it with $uK[[u]]$, the first assertion
applied to $S[[u]]$ proves the second.
\end{proof}

Applied to $B$ and $W$, the lemma gives the three-dimensional
Gorenstein ring $R\cong B[[u]]/uW[[u]]$.
Geometrically, $\Spec(R)$ is the scheme-theoretic union of
$\V(u)$ and $\V(WB[[u]])$ inside $\Spec(B[[u]])$.

\begin{proof}[Proof of \cref{thm:roberts-ma}]
Let $L^\vee$ denote the length-$15$ module called $L^\vee$ in
\cite[Section~2.1]{Ma}, and set
\[
 M_H=\Ext_H^3(L^\vee,H),\qquad M_B=M_H\otimes_H B.
\]
The calculation in \cref{eq:dhm-positive-interface} gives
\[
 M_H\lotimes_H C_0\simeq k^6\oplus\shift k^5.
\]
Since $B$ is maximal Cohen--Macaulay over $H$, it is Tor-independent
from $M_H$. Thus $M_B$ has finite length and finite projective dimension
over $B$. Roberts' calculation gives
\[
 \ell(M_B)=31,\qquad
 \ell\bigl(\Ext_B^3(M_B,B)\bigr)=29
 \qquad\text{\cite[pp.~36--38]{Ro2}}.
\]
Set $E=k\otimes_{C_0}C=C/(y,w)C\cong k[b]/(b^2)$.
Thus $\ell(E)=2$. Since $C$ is free of rank two over $C_0$, derived
base change yields
\begin{equation}\label{eq:roberts-interface}
\begin{aligned}
 M_B\lotimes_B C
 &\simeq M_H\lotimes_H C\\
 &\simeq(M_H\lotimes_H C_0)\lotimes_{C_0}C
 \simeq E^6\oplus\shift E^5.
\end{aligned}
\end{equation}
In particular, $\chi^B(M_B,C)=2=31-29$.

For the supporting wing, take
\[
 N=A/(y,w)A=E[[u]],\qquad
 M_A=N\oplus\bigl(A/(u,y,w)A\bigr)^5.
\]
The presentation of $C$ gives
\[
 A\cong\frac{k[[y,w,b,u]]}{(b^2-yw)},
\]
so $A$ is a three-dimensional hypersurface. The map $A\twoheadrightarrow C$
has kernel $(u)$. Since $y,w$ is a regular sequence on $C$, the displayed
summands of $M_A$ have projective dimensions two and three, respectively.
Their Koszul resolutions give
\begin{equation}\label{eq:roberts-matching}
 M_A\lotimes_A C\simeq E^6\oplus\shift E^5
 \simeq M_B\lotimes_B C.
\end{equation}

The patching method in \cite{Ma} produces an $R$-module $M$ of finite
projective dimension with
\[
 M\otimes_R A\cong M_A,\qquad M\otimes_R B\cong M_B.
\]
The projection kernels of $R$ are
\[
 I=0\times W,\qquad J=(u)A\times0=(u,0)R.
\]
Since $W$ is a nonzero ideal in the domain $B$ and $u$ is a nonzerodivisor
on $A$, these ideals satisfy
\[
 \ann_R I=J,\qquad \ann_R J=I.
\]
The hypersurface $C$ is a domain, so $A=C[[u]]$ is a domain as well.
Thus $R\subseteq A\oplus B$ is reduced, and $I,J$ are its two minimal
primes.
The wings are therefore linked, and $\dim_R M=1$, proving the theorem.
\end{proof}

The constructed module has spine $N=E[[u]]$ and supporting-wing
torsion $\H^0_{\m}(M_A)=E^5$. By \cref{prop:structure}, one has
\[
 0\longrightarrow T\longrightarrow M\longrightarrow E[[u]]
 \longrightarrow0,
 \qquad
 0\longrightarrow\omega_B\otimes_B M_B\longrightarrow T
 \longrightarrow E^5\longrightarrow0.
\]
By \cref{lem:canonical-length},
\[
 \ell(T)=\ell\bigl(\Ext_B^3(M_B,B)\bigr)+5\ell(E)=29+10=39.
\]
The numerical balance is
\[
 \ell(N/uN)=\ell(E)=2
 =\ell(M_B)-\ell\bigl(\Ext_B^3(M_B,B)\bigr).
\]
Thus the length asymmetry in Roberts' construction is realized as the
length of the spine modulo $u$.

\section{On the size of finite-length torsion}
\label{sec:torsion-size}

Retain the setup of \cref{sec:setup}, suppose that $R$ is Gorenstein,
and let $M$ have finite projective dimension and satisfy
\cref{eq:unbalanced}. Set $T=\H^0_{\m}(M)$ and $N=M/T$.

For a finite-length module $L$ over a local ring $(S,\mathfrak n)$,
write
\[
\begin{gathered}
 \ll(L)=\min\{n\geq0:\mathfrak n^nL=0\},
 \\
 \gll(S)=\min\{\ll(S/(\mathbf x)):
 \mathbf x\text{ is a system of parameters for }S\}.
\end{gathered}
\]

We begin by locating the socle in the contribution from the collapsed
wing.

\begin{theorem}\label{cor:torsion-socle}
Let $(R,\m)$ be a Gorenstein local ring of dimension $d$, with linked
Cohen--Macaulay wings $A=R/I$ and $B=R/J$ as in \cref{sec:setup}.
Let $M$ be a finitely generated $R$-module of finite projective
dimension with $\dim M_A\geq1$ and $\dim M_B=0$, and set
$T=\H^0_{\m}(M)$. Then
\[
 (0:_M J)=IM.
\]
Consequently, under the structural inclusion in
\cref{eq:torsion-components},
\[
 \Soc(I\otimes_B M_B)=\Soc(T)=\Soc(M).
\]
\end{theorem}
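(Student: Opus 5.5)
The plan is to identify $(0:_M J)$ with $\Hom_R(B,M)$ and compute that Hom group by tensor evaluation, using that $M$ is perfect and $B$ is maximal Cohen--Macaulay over the Gorenstein ring $R$.

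First, $(0:_M J)\cong\Hom_R(B,M)$ via $\varphi\mapsto\varphi(1)$. Since $R$ is Gorenstein and $B$ is maximal Cohen--Macaulay of dimension $d$, we have $\Ext_R^i(B,R)=0$ for $i\geq1$. Moreover $\Hom_R(B,R)=\ann_R J=I$ by the linkage conditions \cref{eq:linkage}. Thus $\RHom_R(B,R)\simeq I$.

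Next, let $F$ be a finite free resolution of $M$. The natural tensor-evaluation map
\[
 \RHom_R(B,R)\lotimes_R F\longrightarrow\RHom_R(B,F)
\]
is an isomorphism because $F$ is a bounded complex of finite free modules. One checks this termwise on $F_i=R^{n_i}$ and then passes to the bounded complex. Taking $\H_0$, and using that both sides are concentrated in nonnegative homological degrees, gives the natural map
\[
 I\otimes_R M\longrightarrow\Hom_R(B,M),\qquad i\otimes m\longmapsto(1\mapsto im).
\]
On the left, $\H_0(I\lotimes_R M)=I\otimes_R M$. On the right, $\H_0\RHom_R(B,M)=\Hom_R(B,M)$, since $\RHom_R(B,F)$ has no homology in positive homological degree. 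This map is therefore an isomorphism. Its image in $(0:_M J)$ is exactly $IM$, so $(0:_M J)=IM$.

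As a byproduct, the same map shows that $I\otimes_R M\to IM$ is injective. This is consistent with $\Tor_1^R(M,A)=0$ from \cref{obs:restriction}. The main point needing care is naturality: the abstract derived isomorphism must be the multiplication map on $\H_0$. I would verify this by checking the tensor-evaluation map on free modules, where it is the evident identification $\Hom_R(B,R)\otimes_R R^n\cong\Hom_R(B,R^n)$.

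For the socle statement, $\Soc(M)=(0:_M\m)\subseteq(0:_M J)=IM$, since $J\subseteq\m$. Also $IM\subseteq T\subseteq M$ by the discussion after \cref{eq:torsion-spine}. Hence
\[
 \Soc(M)=\Soc(T)=\Soc(IM).
\]
Finally, \cref{obs:restriction} gives $\Tor_1^R(M,A)=0$. In \cref{prop:structure} the structural inclusion $I\otimes_B M_B\hookrightarrow T$ is precisely the identification $I\otimes_R M\cong IM\subseteq T$. Therefore $\Soc(I\otimes_B M_B)=\Soc(T)=\Soc(M)$.
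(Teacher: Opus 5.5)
Your proof is correct and follows essentially the paper's argument: $\RHom_R(B,R)\simeq I$ by canonical duality and linkage, tensor evaluation against a finite free resolution of $M$ identifies $\Hom_R(B,M)=(0:_M J)$ in degree zero with $I\otimes_R M\cong IM$, and the socle equalities follow from $\Soc(M)\subseteq(0:_M J)=IM\subseteq T$. Your remark that both sides are concentrated in nonnegative homological degrees is unnecessary, since $\H_0$ of each side is the stated module in any case, but it does no harm.
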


\begin{proof}
As $R$ is Gorenstein and $B$ is maximal Cohen--Macaulay,
canonical duality gives $\RHom_R(B,R)\simeq I$; see
\cite[Lecture~11]{24H}. Since $M$ has finite projective dimension, tensor evaluation and
the Tor vanishing in \cref{obs:restriction} give
\[
 \RHom_R(B,M)\simeq I\lotimes_R M\simeq I\otimes_B M_B.
\]
In degree zero, using the natural map to $M$ identifies
$\Hom_R(B,M)=(0:_M J)$ with the structural submodule
$IM\cong I\otimes_B M_B$ from \cref{prop:structure}.
Since $J\subseteq\m$, the structural inclusion gives
\[
 \Soc(M)\subseteq\Soc(I\otimes_B M_B)\subseteq\Soc(M).
\]
Also $\Soc(T)=\Soc(M)$, since $T=\H^0_{\m}(M)$.
\end{proof}

Thus $I\otimes_B M_B$ is essential in $T$: every nonzero submodule of
$T$ meets it. In particular, \cref{eq:torsion-components} does not split,
since $\H^0_{\m}(M_A)\neq0$ by \cref{obs:restriction}.
Thus $T$ is not semisimple. In particular,
\[
 \ell(T)\geq2,\qquad \ll(T)\geq2.
\]

By \cref{prop:structure,lem:canonical-length}, duality rewrites the
structural sequence as
\begin{equation}\label{eq:canonical-torsion}
 0\longrightarrow\Ext_B^d(M_B,B)^\vee\longrightarrow T
 \longrightarrow\H^0_{\m}(M_A)\longrightarrow0.
\end{equation}
Both end terms are nonzero. This sequence and
\cref{cor:torsion-socle} give the following numerical consequences.

\begin{lemma}\label{lem:torsion-numerical}
One has
\begin{equation}\label{eq:torsion-numerical}
\begin{aligned}
 \ell(T)&=\ell\bigl(\Ext_B^d(M_B,B)\bigr)
          +\ell\bigl(\H^0_{\m}(M_A)\bigr),\\
 \ll(T)&\geq\max\bigl\{\ll\bigl(\Ext_B^d(M_B,B)\bigr),
                       \ll\bigl(\H^0_{\m}(M_A)\bigr)+1\bigr\}.
\end{aligned}
\end{equation}
\end{lemma}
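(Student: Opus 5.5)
The plan is to read both statements off the exact sequence \cref{eq:canonical-torsion}, which is available because \cref{obs:restriction} gives $\Tor_1^R(M,A)=0$, so \cref{prop:structure} applies, and \cref{lem:canonical-length} over $B$ (with canonical ideal $I$) identifies $I\otimes_B M_B$ with $\Ext_B^d(M_B,B)^\vee$.

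\emph{Length.} Additivity of length on the short exact sequence
\[
0\longrightarrow\Ext_B^d(M_B,B)^\vee\longrightarrow T\longrightarrow\H^0_{\m}(M_A)\longrightarrow0
\]
gives $\ell(T)=\ell(\Ext_B^d(M_B,B)^\vee)+\ell(\H^0_{\m}(M_A))$. Matlis duality preserves length, so the first summand equals $\ell(\Ext_B^d(M_B,B))$.

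\emph{Loewy length.} The term $\ll(T)\geq\ll(\Ext_B^d(M_B,B))$ holds because Loewy length does not increase on submodules and is preserved by Matlis duality, as recorded in \cref{lem:canonical-length}. For the other term, set $n=\ll(\H^0_{\m}(M_A))$, which is at least $1$ since $\H^0_{\m}(M_A)\neq0$. Suppose, for contradiction, that $\m^nT=0$.
\begin{enumerate}[(i)]
\item Since $\m^{n-1}\H^0_{\m}(M_A)\neq0$, we can choose $t\in\m^{n-1}T$ whose image in $\H^0_{\m}(M_A)$ is nonzero.
\item Then $\m t\subseteq\m^nT=0$, so $t\in\Soc(T)$.
\item By \cref{cor:torsion-socle}, $\Soc(T)$ lies in the submodule $I\otimes_B M_B$.
\item Hence the image of $t$ in $\H^0_{\m}(M_A)$ is zero, a contradiction.
\end{enumerate}
Therefore $\m^nT\neq0$, that is, $\ll(T)\geq n+1$.

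Both parts are formal once \cref{eq:canonical-torsion} and \cref{cor:torsion-socle} are in hand. The only substantive point is the socle containment used in step (iii): it is what forces the extra $+1$, and it is exactly why the sequence cannot split.
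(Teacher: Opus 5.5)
Your proposal is correct and follows essentially the same route as the paper: length additivity and Matlis duality on \cref{eq:canonical-torsion}, and the socle containment from \cref{cor:torsion-socle} to force the extra $+1$. The only difference is cosmetic. You argue by contradiction starting from $n=\ll(\H^0_{\m}(M_A))$, whereas the paper takes $n=\ll(T)$ and observes directly that $\m^{n-1}T\subseteq\Soc(T)$ maps to zero in $\H^0_{\m}(M_A)$.
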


\begin{proof}
Matlis duality preserves length and annihilators, hence Loewy length.
The length equality and the bound
$\ll(T)\geq\ll(\Ext_B^d(M_B,B))$ follow from
\cref{eq:canonical-torsion}.
If $n=\ll(T)$, then $\m^{n-1}T\subseteq\Soc(T)$ lies in
the collapsed-wing submodule by \cref{cor:torsion-socle}. Its image in
$\H^0_{\m}(M_A)$ is therefore zero, so
$\ll(\H^0_{\m}(M_A))\leq n-1$.
\end{proof}

The length conjecture \cite[Conjecture~1]{IMW} and the Loewy length
conjecture \cite[Conjecture~A]{KP} assert, respectively, that
\[
 \ell(L)\geq e(S),\qquad \ll(L)\geq\gll(S)
\]
for every nonzero finite-length $S$-module $L$ of finite projective
dimension.

Applying the respective conjectures to the nonzero finite-length
$B$-module $\Ext_B^d(M_B,B)$, which has finite projective dimension by
\cref{lem:canonical-length}, gives the following bounds.

\begin{theorem}\label{thm:collapsed-wing-torsion-size}
If the length conjecture holds over $B$, then
\[
 \ell(T)\geq e(B)+1.
\]
If the Loewy length conjecture holds over $B$, then
\[
 \ll(T)\geq\max\{2,\gll(B)\}.
\]
\end{theorem}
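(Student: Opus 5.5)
The plan is to combine \cref{lem:torsion-numerical} with the two conjectures applied to the single $B$-module $L:=\Ext_B^d(M_B,B)$. First I check that $L$ meets the hypotheses of the conjectures. By \cref{obs:restriction}, $M_B$ is nonzero, has finite length, and has finite projective dimension over $B$. It is nonzero because $\Supp M\cap\V(J)=\{\m\}$ is nonempty, or alternatively because $\pdim_B M_B=d$. Since $B$ is Cohen--Macaulay of dimension $d$, the final assertion of \cref{lem:canonical-length} (with $S=B$ and $\omega_S=I$) shows that $L$ is nonzero, has finite length, and has finite projective dimension over $B$.

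For the length bound, the length conjecture over $B$ gives $\ell(L)\geq e(B)$. The first line of \cref{eq:torsion-numerical} reads $\ell(T)=\ell(L)+\ell(\H^0_{\m}(M_A))$. So it only remains to see that $\H^0_{\m}(M_A)\neq0$, which makes its length at least $1$. This is the observation recorded after \cref{cor:torsion-socle}. I would justify it directly: if $\H^0_{\m}(M_A)=0$, then \cref{prop:structure} gives $T\cong I\otimes_B M_B=IM$. Then $T$ would equal its essential submodule and the sequence would split trivially. The cleaner route is depth: $\depth_R M=0$ by \cref{obs:restriction}, and $\Tor_i^R(M,A)=0$ for $i\ge1$ together with the Auslander--Buchsbaum equality $\pdim_A M_A=d$ forces $\depth_A M_A=0$. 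Since $\dim M_A\geq1$, we get $\H^0_{\m}(M_A)\neq0$. Adding the two contributions gives $\ell(T)\geq e(B)+1$.

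For the Loewy length bound, the second line of \cref{eq:torsion-numerical} gives $\ll(T)\geq\ll(L)$ and $\ll(T)\geq\ll(\H^0_{\m}(M_A))+1\geq2$. The last inequality again uses $\H^0_{\m}(M_A)\neq0$; the bound $\ll(T)\geq2$ was also noted after \cref{cor:torsion-socle}. The Loewy length conjecture over $B$, applied to $L$, gives $\ll(L)\geq\gll(B)$. Combining the two yields $\ll(T)\geq\max\{2,\gll(B)\}$.

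The only step needing care is the nonvanishing of $\H^0_{\m}(M_A)$, that is, $\depth_A M_A=0$. I expect it to follow from $\pdim_A M_A=d=\depth A$ via Auslander--Buchsbaum, as recorded in \cref{obs:restriction}. Everything else is bookkeeping with \cref{lem:torsion-numerical} and \cref{lem:canonical-length}.
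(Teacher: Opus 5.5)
Your proposal is correct and follows the paper's own argument. You apply the two conjectures to the nonzero finite-length $B$-module $\Ext_B^d(M_B,B)$, which has finite projective dimension by \cref{lem:canonical-length}, and feed this into \cref{lem:torsion-numerical} using $\H^0_{\m}(M_A)\neq0$; the paper likewise derives that nonvanishing from \cref{obs:restriction} via $\pdim_A M_A=d=\depth A$. Your aside that $T$ would ``equal its essential submodule'' gives no contradiction on its own, so you are right to rely on the depth argument instead.
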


\begin{remark}
The length bound holds when $B$ is numerically Roberts of positive
characteristic; this class includes local complete intersections.
Indeed, let $F$ be a minimal finite free resolution of a nonzero
finite-length $B$-module of finite projective dimension.
Then $\chi(F)=\chi_\infty(F)$ by \cite[Section~4]{IMW}.
After completion, \cite[Theorem~1.2]{IMW26} gives
$\chi_\infty(F)\geq e(B)$, proving the length bound.

The Loewy length bound holds when $B$ is strict Cohen--Macaulay,
meaning that $\gr_{\mathfrak m_B}(B)$ is Cohen--Macaulay, and its
residue field is infinite \cite[Theorem~B]{KP}.
\end{remark}

It is natural to ask whether $\H^0_{\m}(M_A)$ satisfies analogous bounds.
Since $M_A$ has finite projective dimension, the sequence
$0\to\H^0_{\m}(M_A)\to M_A\to N\to0$ gives
\[
 \pdim_A\H^0_{\m}(M_A)<\infty
 \quad\Longleftrightarrow\quad\pdim_A N<\infty.
\]

\begin{corollary}\label{thm:torsion-size}
If $\pdim_A N<\infty$ and the length conjecture holds over both
$A$ and $B$, then
\[
 \ell(T)\geq e(A)+e(B)=e(R).
\]
If $\pdim_A N<\infty$ and the Loewy length conjecture holds over
both $A$ and $B$, then
\[
 \ll(T)\geq\max\{\gll(A)+1,\gll(B)\}.
\]
\end{corollary}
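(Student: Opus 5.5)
The plan is to apply the conjectures on each wing to the two end terms of \cref{eq:canonical-torsion}, and then feed the results into \cref{lem:torsion-numerical}.

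First, the collapsed-wing term. By \cref{lem:canonical-length}, $\Ext_B^d(M_B,B)$ is a nonzero finite-length $B$-module of finite projective dimension. The length conjecture over $B$ therefore gives $\ell(\Ext_B^d(M_B,B))\geq e(B)$. The Loewy length conjecture over $B$ gives $\ll(\Ext_B^d(M_B,B))\geq\gll(B)$.

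Second, the supporting-wing term. The remark preceding the corollary shows that $\pdim_A N<\infty$ is equivalent to $\pdim_A\H^0_{\m}(M_A)<\infty$. By \cref{obs:restriction}, $\H^0_{\m}(M_A)$ is nonzero. This is the one point to state carefully: if it were zero, then $M_A=N$ would have positive depth, contradicting $\pdim_A M_A=d$ via Auslander--Buchsbaum. The module also has finite length. Hence the conjectures over $A$ give
\[
\ell(\H^0_{\m}(M_A))\geq e(A),\qquad \ll(\H^0_{\m}(M_A))\geq\gll(A).
\]

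Substituting into \cref{eq:torsion-numerical} yields
\[
\ell(T)\geq e(A)+e(B),\qquad \ll(T)\geq\max\{\gll(A)+1,\gll(B)\}.
\]

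It remains to identify $e(A)+e(B)$ with $e(R)$. The exact sequence \cref{eq:ring-patching} gives
\[
e(R)=e(A)+e(B)-e_{\m}(C,d).
\]
Since $\dim C=d-1<d$, the $d$-dimensional multiplicity symbol of $C$ vanishes. Equivalently, one can use the associativity formula: every minimal prime of $R$ of dimension $d$ lies in exactly one wing, and the localizations agree with those of $A$ or $B$ respectively, because $I$ and $J$ arise from a partition of the primary components.

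I expect no real obstacle. The only points needing care are the nonvanishing of $\H^0_{\m}(M_A)$ and the multiplicity additivity; both are routine.
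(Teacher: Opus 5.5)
Your proposal is correct and follows essentially the same route as the paper: apply the conjectures on each wing to $\Ext_B^d(M_B,B)$ and $\H^0_{\m}(M_A)$, substitute into \cref{eq:torsion-numerical}, and obtain $e(R)=e(A)+e(B)$ from additivity on \cref{eq:ring-patching}, using $\dim C=d-1$. You also spell out two points the paper leaves implicit, and both are correct: that $\H^0_{\m}(M_A)\neq0$ follows from Auslander--Buchsbaum, and that the multiplicity identity can alternatively be read off from the associativity formula.
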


\begin{proof}
Apply the conjectures to $\Ext_B^d(M_B,B)$ and $\H^0_{\m}(M_A)$,
and use \cref{eq:torsion-numerical}.
The equality $e(R)=e(A)+e(B)$ follows from
\cref{eq:ring-patching}, since $\dim C=d-1$.
\end{proof}

By \cref{cor:perfect-spine-balance}, finite projective dimension of $N$
over $A$ forces $\depth_A N=1$. When $\depth_A N\geq2$,
both $N$ and $\H^0_{\m}(M_A)$ have infinite projective dimension over
$A$, as explained after that corollary. Thus the conjectures do not
apply to the supporting-wing contribution in this case.

The unbalanced-support hypothesis is essential to these bounds.
The following standard construction produces a module of finite
projective dimension and full support whose torsion is the residue field.

\begin{example}\label{ex:full-support-torsion}
Let $R$ be a singular Gorenstein local ring of dimension
$d\geq1$ with residue field $k$. Let $F\to k$ be a minimal free
resolution, and write $(-)^*=\Hom_R(-,R)$. Set
$M=\coker(F_{d-1}^*\to F_d^*)$. Since $\Ext_R^i(k,R)=0$ for
$i<d$, the dual truncation is a minimal free resolution of $M$,
so $\pdim_R M=d$. Moreover,
\[
 0\longrightarrow k\longrightarrow M
 \longrightarrow\operatorname{im}(F_d^*\to F_{d+1}^*)
 \longrightarrow0.
\]
The quotient is a submodule of a finite free module over a ring of
positive depth, so it has no finite-length torsion. Hence
$T=\H^0_{\m}(M)=k$.

Localizing the dual resolution away from $\m$ gives a split
exact complex, so $M$ is locally free of constant rank there. If this
rank were zero, then $M=T=k$ would have finite projective dimension,
forcing $R$ to be regular. Thus $\Supp_R(M)=\Spec(R)$.
Thus
\[
 \ell(T)=\ll(T)=1.
\]
Even the bounds $\ell(T)\geq2$ and $\ll(T)\geq2$ established above
can therefore fail without unbalanced support. Without this hypothesis,
the structural sequence for the torsion is unavailable.
\end{example}

The examples with unbalanced support constructed in this paper have
substantial torsion, but they give little information about its minimum
possible size. The preceding results nevertheless suggest looking for
uniform bounds that reflect the singularity of the ambient ring.
The usual length and Loewy length conjectures do not apply directly to
$T$: it has infinite projective dimension over $R$, since otherwise
$M/T$ would contradict the New Intersection Theorem. Moreover, the
strengthened wing bounds in \cref{eq:torsion-numerical} do not yet
give the proposed ambient Loewy bound. We therefore pose the following
as an exploratory question.

\begin{question}\label{q:torsion-size}
Let $R$ be a Gorenstein local ring with linked Cohen--Macaulay wings
$A$ and $B$, and let $M$ have finite projective dimension and
unbalanced support with respect to these wings. Must its torsion
$T=\H^0_{\m}(M)$ satisfy
\[
 \ell(T)\geq e(R)
 \qquad\text{and}\qquad
 \ll(T)\geq\gll(R)?
\]
More generally, what uniform lower bounds for these quantities can be
expressed in terms of invariants of the ambient singularity $R$?
\end{question}

\appendix
\section{The New Intersection Theorem via the socle trap}
\label{app:nit}

We give a big Cohen--Macaulay proof of the New Intersection Theorem
\cite{PS1,H,Ro1}. The argument arose from asking what homological
signals an isolated intersection with a perfect complex must leave.
For a related proof using complexes of maximal depth, see
Iyengar--Ma--Schwede--Walker \cite[Theorem~2.4]{IMSW}.
The direct socle argument below exhibits the nonvanishing homology
explicitly. Applied to unbalanced support, it explains why finite-length
torsion is unavoidable.

\begin{proposition}\label{prop:big-cm-signal}
Let $(S,\n,k)$ be a Noetherian local ring, $\p\in\Spec S$, and
$e=\dim S/\p$. Let $F$ be a minimal complex of finite free $S$-modules
concentrated in degrees $0,\ldots,s$, with $F_s\neq0$ and
$\Supp F\cap\V(\p)=\{\n\}$. For any balanced big Cohen--Macaulay
$S/\p$-algebra $\mathcal B$, one has
\[
 \H_{s-e}(F\otimes_S\mathcal B)\neq0.
\]
In particular $s\geq e$.
\end{proposition}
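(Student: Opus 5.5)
The plan is to compare two ways of computing the local cohomology of the complex $G=F\otimes_S\mathcal B$. The homology of $G$ is already $\n$-torsion, while each term $G_i$ has local cohomology in the single degree $e$. Equating the two computations turns $\H_{s-e}(G)$ into a top homology group. That group contains an explicit nonzero element obtained by tensoring a basis vector of $F_s$ with a socle element of $\H^e_{\n}(\mathcal B)$. Throughout I write $\mathcal B$ for the big Cohen--Macaulay algebra.

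\emph{Step 1: $\H_i(G)$ is $\n$-power torsion.} The hypothesis $\Supp F\cap\V(\p)=\{\n\}$ says that $F_{\mathfrak q}$ is exact for every $\mathfrak q\in\V(\p)\setminus\{\n\}$. Since $F$ is a bounded complex of finite free modules, exactness at $\mathfrak q$ means $F_{\mathfrak q}$ is split exact, i.e.\ contractible. Hence the homology of $F$ is supported on a closed set $\V(\mathfrak a)$ with $\V(\mathfrak a)\cap\V(\p)=\{\n\}$. For each $x\in\mathfrak a$, the complex $F_x$ is split exact. A contracting homotopy on $F_x$ has finitely many entries, so clearing denominators gives $x^N\cdot\mathrm{id}_F=dh+hd$ for some $N$ and some $S$-linear $h$. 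Therefore $x^N$ kills $\H(F\otimes_S X)$ for every $S$-module $X$, and in particular for $X=\mathcal B$. Since $\mathcal B$ is an $S/\p$-module and $\sqrt{\mathfrak a+\p}=\n$, each $\H_i(G)$ is annihilated by a power of every element of $\n$. So $\H_i(G)$ is $\n$-torsion.

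\emph{Step 2: compare the two computations.} Let $\check C$ be the Čech complex on a system of parameters $y_1,\dots,y_e$ of $S/\p$, lifted to $S$, and consider the double complex $G\otimes_S\check C$. Filtering by homology of $G$ first, each $\H_i(G)$ is $\n$-torsion by Step 1, so only $\H^0_{\n}$ survives and the total homology is $\H_*(G)$. Filtering by the other direction, $\mathcal B$ is balanced big Cohen--Macaulay over $S/\p$, so $\H^j_{\n}(G_i)=F_i\otimes_S\H^e_{\n}(\mathcal B)$ for $j=e$ and vanishes otherwise. Since $F$ is bounded, both filtrations converge, and the second spectral sequence collapses to a single row. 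This yields
\[
 \H_i(G)\cong\H_{i+e}\bigl(F\otimes_S\H^e_{\n}(\mathcal B)\bigr).
\]
Taking $i=s-e$ gives
\[
 \H_{s-e}(G)\cong\ker\bigl(F_s\otimes_S\H^e_{\n}(\mathcal B)\to F_{s-1}\otimes_S\H^e_{\n}(\mathcal B)\bigr).
\]

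\emph{Step 3: the socle trap.} Balancedness gives $\H^e_{\n}(\mathcal B)\neq0$; this is where $\n\mathcal B\neq\mathcal B$ is used. The module $\H^e_{\n}(\mathcal B)$ is $\n$-torsion, so it has a nonzero socle element $\eta$. Minimality of $F$ puts the entries of $\partial_s$ in $\n$, so for a basis vector $\epsilon$ of $F_s\neq0$ the element $\epsilon\otimes\eta$ lies in the kernel above, and it is nonzero. Therefore $\H_{s-e}(G)\neq0$. Since $G$ vanishes in negative degrees, this forces $s-e\geq0$, i.e.\ $s\geq e$.

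The main obstacle is Step 2. One has to set up the double complex with the correct indexing so that the degree shift comes out as $+e$, and check convergence of both spectral sequences for the bounded complex $F$. One also has to make sure that Step 1 really gives $\n$-torsion for the possibly non-Noetherian module $\mathcal B$; the explicit null-homotopy argument handles this, because it kills $\H(F\otimes_S X)$ uniformly in $X$.
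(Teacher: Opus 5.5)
Your proposal is correct and follows essentially the same route as the paper: your \v{C}ech double complex is an explicit unwinding of the paper's identifications $F\otimes_S\mathcal B\simeq F\otimes_S\mathrm R\Gamma_{\n}(\mathcal B)\simeq\Sigma^{-e}F\otimes_S\H^e_{\n}(\mathcal B)$, and your Step 3 is exactly the paper's socle trap. Your null-homotopy argument in Step 1 is valid, though it is more than needed: localizing cyclic submodules of $\H_i(G)$ at primes $\mathfrak q\neq\n$ already shows that $\H_i(G)$ is $\n$-torsion.
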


\begin{proof}
The support hypothesis implies that the homology of
$F\otimes_S\mathcal B$ is $\n$-torsion. Since $F$ is a bounded complex
of finite free modules,
\[
 F\otimes_S\mathcal B
 \simeq F\otimes_S\mathrm R\Gamma_{\n}(\mathcal B).
\]
The balanced big Cohen--Macaulay property gives
\[
 \mathrm R\Gamma_{\n}(\mathcal B)
 \simeq\Sigma^{-e}\H^e_{\n}(\mathcal B).
\]
Regularity of a system of parameters on $\mathcal B$, together with
$\n\mathcal B\neq\mathcal B$, gives $\H^e_{\n}(\mathcal B)\neq0$.
Being a nonzero $\n$-torsion module,
$\H^e_{\n}(\mathcal B)$ has a nonzero socle.
Choosing $k\hookrightarrow\H^e_{\n}(\mathcal B)$ gives the
``socle trap''
\[
 0\neq F_s\otimes_S k\hookrightarrow
 \H_s(F\otimes_S\H^e_{\n}(\mathcal B))
 \cong\H_{s-e}(F\otimes_S\mathcal B).
\]
Minimality makes these socle elements cycles, and $F_{s+1}=0$ ensures
that none is a boundary. This proves the nonvanishing.
Since $F\otimes_S\mathcal B$ is concentrated
in nonnegative degrees, it also gives $s\geq e$.
\end{proof}

Balanced big Cohen--Macaulay algebras exist over every Noetherian local
ring: the equal-characteristic case is classical, and the
mixed-characteristic case follows from Andr\'e's work \cite{Andre}.
Taking $F$ to resolve $M$ and $\p$ to define a
maximal-dimensional component of a closed set $Z$ meeting $\Supp M$ only
at $\n$ gives the New Intersection Theorem:
$\pdim_S M\geq\dim Z$.
\enlargethispage{\baselineskip}
If $M$ has unbalanced support, choose $\p$ defining the top-dimensional
component on which its support collapses. Then
\[
 \dim S\leq\pdim_S M
 =\depth S-\depth_S M\leq\dim S,
\]
which gives \cref{eq:nit-reduction}.

\section*{Acknowledgements}

We thank Linquan Ma for suggesting the geometric reformulation of the
construction in \cref{thm:roberts-ma} and for pointing out the
relevance of \cref{ex:full-support-torsion}.

\section*{AI disclosure}

Except for the specific AI contributions described below, the results
and proofs developed in this paper are due to the author. Before
\cite{Ma} appeared, I had been studying unbalanced support, had
established the positive-depth homological signals, and had developed
the torsion--spine perspective and the initial obstructions.
After the counterexample in \cite{Ma} appeared, ChatGPT identified
the length asymmetry in \cref{eq:length-asymmetry} when I supplied
the surrounding theory and the positivity obstruction. Building
on that observation, I developed the general numerical
torsion--spine balance using derived compatibility of the two wings.

I devised the variation in \cref{susec:variation}. The length
asymmetry in \cref{eq:length-asymmetry} suggested similarly patching
Roberts' construction over a Gorenstein ambient ring. After I prompted
it to use \cite{Ro2}, ChatGPT found an algebraic construction. The geometric
reformulation of the construction in \cref{thm:roberts-ma} follows
a suggestion of Linquan Ma and was developed with ChatGPT's
assistance. In particular, \cref{prop:roberts-canonical,lem:canonical-ideal-gluing}
are due to ChatGPT. Gemini found the alternative Euler-characteristic proof
in \cref{rem:gemini}.

Prompted by my initial observations about the socle and the
torsion--spine structure, ChatGPT also identified and proved the socle
statement in \cref{cor:torsion-socle}. ChatGPT assisted with writing, formatting,
editing, and bibliography.


\begin{thebibliography}{KSL24}
\bibitem[And]{Andre}
Y. Andr\'e, \emph{Weak functoriality of Cohen--Macaulay algebras},
J. Amer. Math. Soc. \textbf{33} (2020), no.~2, 363--380,
\href{https://doi.org/10.1090/jams/937}{doi:10.1090/jams/937}.

\bibitem[BE]{BE}
D.~A. Buchsbaum and D. Eisenbud,
\emph{What makes a complex exact?}, J. Algebra \textbf{25} (1973),
no.~2, 259--268,
\href{https://doi.org/10.1016/0021-8693(73)90044-6}{doi:10.1016/0021-8693(73)90044-6}.

\bibitem[DHM]{DHM}
S.~P. Dutta, M. Hochster, and J.~E. McLaughlin,
\emph{Modules of finite projective dimension with negative intersection
multiplicities}, Invent. Math. \textbf{79} (1985), no.~2, 253--291,
\href{https://doi.org/10.1007/BF01388973}{doi:10.1007/BF01388973}.

\bibitem[EV]{EV}
H. Esnault and E. Viehweg,
\emph{Lectures on vanishing theorems},
DMV Seminar, vol.~20,
Birkh\"auser Verlag, Basel, 1992,
\href{https://doi.org/10.1007/978-3-0348-8600-0}{doi:10.1007/978-3-0348-8600-0}.

\bibitem[H]{H}
M. Hochster, \emph{Topics in the homological theory of modules over
commutative rings}, CBMS Regional Conference Series in Mathematics,
no.~24, American Mathematical Society, Providence, RI, 1975,
\href{https://doi.org/10.1090/cbms/024}{doi:10.1090/cbms/024}.

\bibitem[24H]{24H}
S.~B. Iyengar, G.~J. Leuschke, A. Leykin, C. Miller, E. Miller,
A.~K. Singh, and U. Walther,
\emph{Twenty-Four Hours of Local Cohomology},
Graduate Studies in Mathematics, vol.~87,
American Mathematical Society, Providence, RI, 2007,
\href{https://doi.org/10.1090/gsm/087}{doi:10.1090/gsm/087}.

\bibitem[IMSW]{IMSW}
S.~B. Iyengar, L. Ma, K. Schwede, and M.~E. Walker,
\emph{Maximal Cohen--Macaulay complexes and their uses: A partial survey},
in \emph{Commutative algebra} (I. Peeva, ed.), Springer, Cham, 2021,
pp.~475--500,
\href{https://doi.org/10.1007/978-3-030-89694-2_15}{doi:10.1007/978-3-030-89694-2\_15}.

\bibitem[IMW]{IMW}
S.~B. Iyengar, L. Ma, and M.~E. Walker,
\emph{Multiplicities and Betti numbers in local algebra via lim Ulrich points},
Algebra Number Theory \textbf{16} (2022), no.~5, 1213--1257,
\href{https://doi.org/10.2140/ant.2022.16.1213}{doi:10.2140/ant.2022.16.1213}.

\bibitem[IMW2]{IMW26}
S.~B. Iyengar, L. Ma, and M.~E. Walker,
\emph{Lech's conjecture in equal characteristic},
preliminary exposition (2026),
\href{https://mark-walker-22.github.io/Lech.html}{available online}.

\bibitem[KP]{KP}
N. KC and J. Pollitz,
\emph{Lower bounds on Loewy lengths of modules of finite projective dimension},
Adv. Math. \textbf{473} (2025), Paper No.~110309, 19~pp.,
\href{https://doi.org/10.1016/j.aim.2025.110309}{doi:10.1016/\allowbreak j.aim.2025.110309}.

\bibitem[KS]{KS}
N. KC and A.~J. Soto Levins,
\emph{On liftings of modules of finite projective dimension},
Int. Math. Res. Not. IMRN (2024), no.~24, 14729--14736,
\href{https://doi.org/10.1093/imrn/rnae257}{doi:10.1093/imrn/rnae257}.

\bibitem[Ma]{Ma}
L. Ma, \emph{Counterexamples to Peskine--Szpiro's conjecture on modules
of finite projective dimension}, preprint (2026),
\href{https://arxiv.org/abs/2608.24018}{arXiv:2608.24018}.

\bibitem[PS1]{PS1}
C. Peskine and L. Szpiro,
\emph{Dimension projective finie et cohomologie locale. Applications
\`a la d\'emonstration de conjectures de M. Auslander, H. Bass et
A. Grothendieck}, Publ. Math. Inst. Hautes \`Etudes Sci. \textbf{42}
(1973), 47--119,
\href{https://doi.org/10.1007/BF02685877}{doi:10.1007/BF02685877}.

\bibitem[PS2]{PS2}
C. Peskine and L. Szpiro,
\emph{Liaison des vari\'et\'es alg\'ebriques. I}, Invent. Math. \textbf{26}
(1974), 271--302,
\href{https://doi.org/10.1007/BF01425554}{doi:10.1007/BF01425554}.

\bibitem[Ro1]{Ro1}
P. Roberts, \emph{Le th\'eor\`eme d'intersection}, C. R. Acad. Sci. Paris
S\'er. I Math. \textbf{304} (1987), no.~7, 177--180.

\bibitem[Ro2]{Ro2}
P. Roberts, \emph{Intersection multiplicities in commutative algebra},
Publications de l'Institut de recherche math\'ematiques de Rennes,
S\'eminaires de math\'ematiques -- science, histoire et soci\'et\'e,
no.~4 (1985), 25--39,
\href{https://numdam.org/item/PSMIR_1985___4_25_0/}{Numdam}.

\bibitem[Ser]{Ser}
J.-P. Serre, \emph{Local algebra},
Springer Monographs in Mathematics,
Springer-Verlag, Berlin, 2000,
\href{https://doi.org/10.1007/978-3-662-04203-8}{doi:10.1007/978-3-662-04203-8}.

\end{thebibliography}
\end{document}